\tikzset{>=latex} \usetikzlibrary{backgrounds}
\numberwithin{equation}{section}
\newtheorem{Theorem}{Theorem}[section]
\newtheorem*{Theorem*}{Theorem}
\newtheorem{Corollary}[Theorem]{Corollary}
\newtheorem{Lemma}[Theorem]{Lemma}
\newtheorem{Proposition}[Theorem]{Proposition}
\theoremstyle{definition}
\newtheorem{Definition}[Theorem]{Definition}
\newtheorem{Example}[Theorem]{Example}
\newtheorem{Remark}[Theorem]{Remark}
\begin{document}

\allowdisplaybreaks

\newcommand{\arXivNumber}{2508.18895}

\renewcommand{\thefootnote}{}

\renewcommand{\PaperNumber}{012}

\FirstPageHeading

\ShortArticleName{A Tensor Category Construction of the $W_{p,q}$ Triplet Vertex Operator Algebra}

\ArticleName{A Tensor Category Construction\\ of the $\boldsymbol{W_{p,q}}$ Triplet Vertex Operator Algebra\\ and Applications\footnote{This paper is a~contribution to the Special Issue on Recent Advances in Vertex Operator Algebras in honor of James Lepowsky. The~full collection is available at \href{https://sigma-journal.com/Lepowsky.html}{https://sigma-journal.com/Lepowsky.html}}}

\Author{Robert MCRAE~$^{\rm a}$ and Valerii SOPIN~$^{\rm b}$}

\AuthorNameForHeading{R.~McRae and V.~Sopin}

\Address{$^{\rm a)}$~Yau Mathematical Sciences Center, Tsinghua University, Beijing 100084, P.R.~China}
\EmailD{\mail{rhmcrae@tsinghua.edu.cn}}

\Address{$^{\rm b)}$~Shanghai Institute for Mathematics and Interdisciplinary Sciences, \\
\hphantom{$^{\rm b)}$}~Shanghai 200433, P.R.~China}
\EmailD{\mail{vvs@myself.com}}

\ArticleDates{Received August 27, 2025, in final form January 19, 2026; Published online February 09, 2026}

\Abstract{For coprime $p,q\in\mathbb{Z}_{\geq 2}$, the triplet vertex operator algebra $W_{p,q}$ is a non-simple extension of the universal Virasoro vertex operator algebra of central charge $c_{p,q}=\smash{1-\frac{6(p-q)^2}{pq}}$, and it is a basic example of a vertex operator algebra appearing in logarithmic conformal field theory. Here, we give a new construction of $W_{p,q}$ different from the original screening operator definition of Feigin--Gainutdinov--Semikhatov--Tipunin. Using our earlier work on the tensor category structure of modules for the Virasoro algebra at central charge~$c_{p,q}$, we show that the simple modules appearing in the decomposition of $W_{p,q}$ as a module for the Virasoro algebra have $\mathrm{PSL}_2$-fusion rules and generate a symmetric tensor category equivalent to $\operatorname{Rep}\mathrm{PSL}_2$. Then we use the theory of commutative algebras in braided tensor categories to construct $W_{p,q}$ as an appropriate non-simple modification of the canonical algebra in the Deligne tensor product of $\operatorname{Rep}\mathrm{PSL}_2$ with this Virasoro subcategory. As a~consequence, we show that the automorphism group of $W_{p,q}$ is $\mathrm{PSL}_2(\mathbb{C})$. We~also define a braided tensor category \smash{$\mathcal{O}_{c_{p,q}}^0$} consisting of modules for the Virasoro algebra at central charge $c_{p,q}$ that induce to untwisted modules of $W_{p,q}$. We show that \smash{$\mathcal{O}_{c_{p,q}}^0$} tensor embeds into the $\mathrm{PSL}_2(\mathbb{C})$-equivariantization of the category of $W_{p,q}$-modules and is closed under contragredient modules. We conjecture that \smash{$\mathcal{O}_{c_{p,q}}^0$} has enough projective objects and is the correct category of Virasoro modules for constructing logarithmic minimal models in conformal field theory.}

\Keywords{triplet vertex operator algebras; $\mathrm{PSL}_2$ automorphism group; braided tensor categories; logarithmic conformal field theory}

\Classification{17B69; 18M15; 81R10; 81T40}

\begin{flushright}
\begin{minipage}{58mm}
\it Dedicated with admiration and\\
gratitude to James Lepowsky on\\ the occasion of his 80th birthday
\end{minipage}
\end{flushright}

\renewcommand{\thefootnote}{\arabic{footnote}}
\setcounter{footnote}{0}

\section{Introduction}

The triplet $W$-algebras $W_{p,q}$ for coprime $p,q\in\mathbb{Z}_{\geq 1}$ are fundamental examples of vertex operator algebras (VOAs) with finite but non-semisimple representation theory. When $q=1$, $W_{p,1}$ is a~simple and $C_2$-cofinite VOA with $2p$ simple modules \cite{AM-Wp1-alg}, automorphism group $\mathrm{PSL}_2(\mathbb{C})$~\cite{ALM}, and a non-semisimple modular tensor category of representations \cite{GaNe,TW1}. Moreover, as conjectured in \cite{FGST0} and proved recently in \cite{CLR, CLM,GaNe}, this modular tensor category is equivalent to the category of finite-dimensional representations of a quasi-Hopf modification of the restricted quantum group of $\mathfrak{sl}_2$ at the root of unity ${\rm e}^{\pi {\rm i}/p}$. When $p,q\geq 2$, $W_{p,q}$ is no longer simple but is still $C_2$-cofinite \cite{AM-W2p-alg, TW2}. These more complicated VOAs are the subject of this paper. Our main results are a~new construction of $W_{p,q}$ for $p,q\geq 2$ using tensor category methods and a~proof that, as in the $q=1$ case, the automorphism group of $W_{p,q}$ is $\mathrm{PSL}_2(\mathbb{C})$.

We first review what has already been established about $W_{p,q}$ for $p,q\geq 2$. The triplet algebra~$W_{p,q}$ was constructed in \cite{FGST1-Log} as the intersection of the kernels of two screening operators acting on the lattice VOA $V_{\sqrt{2pq}\mathbb{Z}}$. It has central charge \smash{$c_{p,q}=1-\frac{6(p-q)^2}{pq}$}, contains the universal Virasoro VOA $V_{c_{p,q}}$ of central charge $c_{p,q}$ as a subalgebra, and is non-simple with the rational simple Virasoro VOA $L_{c_{p,q}}$ of central charge $c_{p,q}$ as its unique simple quotient. Just as $L_{c_{p,q}}$ is the VOA of minimal models in rational conformal field theory in physics, $W_{p,q}$ is the VOA of $W$-extended logarithmic minimal models in logarithmic conformal field theory \cite{PR, Ras-W-ext, Ras-Fusion,RP1, GRW1, GRW2}. Such logarithmic conformal field theories sometimes arise in the analysis of statistical models at critical points; in particular the $(p,q)=(2,3)$ case is relevant for critical percolation (see, for example, \cite{CR} and references therein).

The VOA structure and representation theory of $W_{p,q}$ for $q\geq 2$ are not yet as well understood as in the $q=1$ case. However, the decomposition of $W_{p,q}$ as a module for the Virasoro algebra~$\mathcal{V}{\rm ir}$ is already known \cite{AM-W2p-alg, AM-C2-W-alg,FGST1-Log, TW2}
\begin{gather}\label{eqn:Wpq-decomp-intro}
W_{p,q} = V_{c_{p,q}} \oplus \sum_{n=2}^\infty (2n-1)\cdot \mathcal{L}_{2np-1,1},
\end{gather}
where $\mathcal{L}_{2np-1,1}$ is the simple $\mathcal{V}{\rm ir}$-module of central charge $c_{p,q}$ and lowest conformal weight $(np-1)(nq-1)$. This decomposition follows from the fact that screening operators acting on~$V_{\sqrt{2pq}\mathbb{Z}}$ and its modules form almost exact Felder complexes \cite{Felder} of Feigin--Fuchs modules for~$\mathcal{V}{\rm ir}$~\cite{Feigin--Fuchs}, along with the detailed socle series structure of Feigin--Fuchs modules. In particular, $W_{p,q}$, as the intersection of the kernel of two screening operators acting on $V_{\sqrt{2pq}\mathbb{Z}}$, is generated by the socles of the Feigin--Fuchs $\mathcal{V}{\rm ir}$-modules that make up $V_{\sqrt{2pq}\mathbb{Z}}$, together with the vacuum. It is not clear whether it is possible to describe this intersection of kernels of screening operators without using the rather technical structural results on Feigin--Fuchs modules.

Another known result on the structure of $W_{p,q}$ is that it is a $C_2$-cofinite VOA (see \cite{AM-W2p-alg} for the~${q=2}$ case and \cite{TW2} in general), which thanks to \cite{Hu-C2} implies that its representation category is a finite abelian braided monoidal category. Simple $W_{p,q}$-modules have been classified in \cite{AM-W2p-alg-2} for the $q=2$ case and \cite{TW2} in general, while logarithmic indecomposable $W_{p,q}$-modules have been constructed in \cite{AM-Exp-log, Nak-Wpq-projective}. The monoidal structure on the category of $W_{p,q}$-modules is not rigid, essentially because $W_{p,q}$ is not simple or self-contragredient when $q\geq 2$, but fusion rules have been obtained by various methods in, for example, \cite{Nak-Wpq-fusion,Ras-Fusion, RW-Mod, Wood-Wpq-fusion}.

Now we review conjectures on $W_{p,q}$. In~\cite{FGST2-KL-dual}, a relation (though not quite an equivalence) was conjectured between the monoidal categories of modules for $W_{p,q}$ and for a certain ``Kazhdan--Lusztig dual'' quantum group, generalizing the previously-mentioned equivalence between the categories of modules for $W_{p,1}$ and for the restricted quantum group of~$\mathfrak{sl}_2$. Proving a precise version of this conjecture remains one of the most significant open problems pertaining to the~$W_{p,q}$ triplet algebra. Another problem is determining the automorphism group of $W_{p,q}$, which is $\mathrm{PSL}_2(\mathbb{C})$ in the $q=1$ case~\cite{ALM}. It was suggested in \cite{FGST1-Log} that $\mathrm{PSL}_2(\mathbb{C})$ should also act by VOA automorphisms of $W_{p,q}$ in the $q\geq 2$ case. However, although two derivations of $W_{p,q}$ labeled $E$ and $F$ were constructed in \cite{TW2}, it was not checked there whether the exponentials of~$E$ and $F$ actually generate an action of $\mathrm{PSL}_2(\mathbb{C})$ by automorphisms.

 In the present paper, we give a new construction of $W_{p,q}$ that is independent of the original construction in \cite{FGST1-Log}. In particular, we use tensor category methods, rather than analysis of screening operators on $V_{\sqrt{2pq}\mathbb{Z}}$, to show directly that the $\mathcal{V}{\rm ir}$-module direct sum on the right-hand side of \eqref{eqn:Wpq-decomp-intro} admits the structure of a VOA. Moreover, we show that the VOA structure on the $\mathcal{V}{\rm ir}$-module direct sum in \eqref{eqn:Wpq-decomp-intro} is
sufficiently unique, so that we can conclude that the VOA we have constructed is isomorphic to the triplet algebra $W_{p,q}$ constructed in \cite{FGST1-Log}. As the main application of our construction, we prove the previously-mentioned conjecture that the automorphism group of $W_{p,q}$ is $\mathrm{PSL}_2(\mathbb{C})$, with fixed-point subalgebra $V_{c_{p,q}}$. This will follow from the fact (which will be obvious from our construction) that the multiplicity spaces of the indecomposable $\mathcal{V}{\rm ir}$-module direct summands in \eqref{eqn:Wpq-decomp-intro} carry irreducible representations of~$\mathrm{PSL}(2,\mathbb{C})$ which are suitably compatible with the vertex operator that we construct on the direct sum in \eqref{eqn:Wpq-decomp-intro}. Our result that $V_{c_{p,q}}$ is the fixed-point subalgebra of the $\mathrm{PSL}(2,\mathbb{C})$-action on~$W_{p,q}$ has implications for the relationship between the representation theories of~$W_{p,q}$ and the Virasoro algebra (similar to~\cite{McR-G-equiv}) that we will explore in Section~\ref{sec:Vir-trip-relations}. We discuss our results and methods in more detail next.

Our starting point is our previous paper \cite{MS-cpq-Vir} where we detailed some of the tensor structure of the category $\mathcal{O}_{c_{p,q}}$ of $C_1$-cofinite modules for the universal Virasoro VOA $V_{c_{p,q}}$ of central charge~$c_{p,q}$. This is the same as the category of finite-length modules for the Virasoro algebra~$\mathcal{V}{\rm ir}$ of central charge~$c_{p,q}$ whose composition factors are irreducible quotients of reducible Verma modules, and it is a non-rigid braided tensor category~\cite{CJORY}.
By \eqref{eqn:Wpq-decomp-intro}, $W_{p,q}$ is an infinite direct sum of modules in $\mathcal{O}_{c_{p,q}}$, which by \cite{CMY1,HKL} implies that $W_{p,q}$ has the structure of a commutative algebra in the ind-completion (or direct limit completion) $\operatorname{Ind}(\mathcal{O}_{c_{p,q}})$ of the braided tensor category~$\mathcal{O}_{c_{p,q}}$.
Our goal is to use the tensor structure of~$\mathcal{O}_{c_{p,q}}$ to construct an at first possibly different commutative algebra structure (with $\mathrm{PSL}_2(\mathbb{C})$ automorphism group) on the~$\mathcal{V}{\rm ir}$-module direct sum in \eqref{eqn:Wpq-decomp-intro}, without assuming that \eqref{eqn:Wpq-decomp-intro} already admits such a commutative algebra or VOA structure. Then by \cite{HKL}, any commutative algebra structure that we obtain on the right-hand side of \eqref{eqn:Wpq-decomp-intro} is equivalent to some VOA structure, which we must then show is isomorphic to the already-known VOA $W_{p,q}$ using a suitable uniqueness result.

To achieve this, we first show in Section~\ref{sec:Vir-fus-rules} that the $\mathcal{V}{\rm ir}$-module direct summands in \eqref{eqn:Wpq-decomp-intro}, except with the non-simple VOA $V_{c_{p,q}}$ replaced by its contragredient module $V_{c_{p,q}}'$, are closed under the fusion tensor product of $\mathcal{O}_{c_{p,q}}$. Moreover, we show that these summands have the same fusion rules as the simple modules in the category $\operatorname{Rep} \mathrm{PSL}_2$ of finite-dimensional continuous $\mathrm{PSL}_2(\mathbb{C})$-modules.
This fusion rule calculation is subtle since $\mathcal{O}_{c_{p,q}}$ is non-semisimple and in particular contains logarithmic modules (on which the Virasoro operator $L_0$ acts non-semisimply). To show in Theorem~\ref{thm:Vir-fus-rules} below that the fusion tensor product $\mathcal{L}_{2mp-1,1}\boxtimes\mathcal{L}_{2np-1,1}$ in $\mathcal{O}_{c_{p,q}}$ is essentially semisimple (and in particular not logarithmic), we use fusion rules from~\cite{MS-cpq-Vir} that show $\mathcal{L}_{2mp-1,1}\boxtimes\mathcal{L}_{2np-1,1}$ is a quotient of the ``Kac module'' $\mathcal{K}_{2mp-1,2np-1}$, which is a~certain finite-length submodule of a Feigin--Fuchs $\mathcal{V}{\rm ir}$-module \cite{MDRR}. This shows the fusion tensor product is non-logarithmic, but to calculate it precisely, we need further analysis using the detailed structure of $\mathcal{K}_{2mp-1,2np-1}$ to show that $\mathcal{L}_{2mp-1,1}\boxtimes\mathcal{L}_{2np-1,1}$ is precisely the top socle series layer of $\mathcal{K}_{2mp-1,2np-1}$ (except in the $m=n$ case, where a direct summand of $V_{c_{p,q}}'$ rather than its simple quotient appears). It is interesting that even though our tensor category construction of $W_{p,q}$ is independent of screening operators, it is not independent of the socle series structure of Feigin--Fuchs modules, since these inform the structure of $\mathcal{K}_{2mp-1,2np-1}$ that we use in our computation of $\mathcal{L}_{2mp-1,1}\boxtimes\mathcal{L}_{2np-1,1}$.

Motivated by these Virasoro fusion rules, we then in Section~\ref{sec:sl2-type-cat} define $\mathcal{C}_{\mathrm{PSL}_2}$ to be the full subcategory of $\mathcal{O}_{c_{p,q}}$ consisting of finite direct sums of the modules $V_{c_{p,q}}'$ and $\mathcal{L}_{2np-1,1}$ for $n\in\mathbb{Z}_{\geq 2}$. We show that the fusion tensor product on $\mathcal{O}_{c_{p,q}}$ gives $\mathcal{C}_{\mathrm{PSL}_2}$ the structure of a rigid symmetric tensor category with new unit object $V_{c_{p,q}}'$ rather than $V_{c_{p,q}}$, which is not an object of $\mathcal{C}_{\mathrm{PSL}_2}$. Moreover, we show that $\mathcal{C}_{\mathrm{PSL}_2}$ is symmetric tensor equivalent to $\operatorname{Rep} \mathrm{PSL}_2$. The most interesting part of the proof is showing that $\mathcal{C}_{\mathrm{PSL}_2}$ is rigid, since its objects are not rigid as objects of $\mathcal{O}_{c_{p,q}}$ (because the unit objects in $\mathcal{O}_{c_{p,q}}$ and $\mathcal{C}_{\mathrm{PSL}_2}$ are different). We prove that $\mathcal{C}_{\mathrm{PSL}_2}$ is rigid by showing that a certain $F$-matrix entry (or $6j$-symbol) associated to a suitable associativity isomorphism is non-zero, and we show this using constraints on the $F$-matrix coming from the hexagon axiom for braided tensor categories. This is partially related to (but in this specific case much simpler than) Huang's proof of rigidity for the module category of a rational $C_2$-cofinite VOA \cite{Hu-rigidity}, where relevant $F$-matrix entries were shown to be non-zero using information coming from modular character transformations.

Alternatively, one could use the $\operatorname{Rep} \mathrm{PSL}_2$ fusion rules of $\mathcal{C}_{\mathrm{PSL}_2}$ together with the recent paper~\cite{EP} to show that $\mathcal{C}_{\mathrm{PSL}_2}$ is rigid. However,
we believe that constraining $F$-matrices using the hexagon axiom may be useful for proving rigidity in further examples of vertex algebraic tensor categories where the fusion rules are not fully known. We also remark that calculating $F$-matrices (or $6j$-symbols) is an interesting problem in its own right since $F$-matrices appear directly in rational conformal field theory \cite{MS} and state-sum invariants of $3$-manifolds such as Reshetikhin--Turaev \cite{RT} and Turaev--Viro \cite{TV} invariants, and they are also related to Kashaev's volume conjecture for hyperbolic knots \cite{Ka}. They are also relevant in the classification of semisimple tensor categories with given fusion rules \cite{RSW}, since the information in the $6j$-symbols is precisely what is forgotten in passing from a semisimple tensor category to its Grothendieck ring.

In Section~\ref{sec:comm-alg-and-VOA}, we complete our tensor-categorical construction of $W_{p,q}$. First, since $\mathcal{C}_{\mathrm{PSL}_2}$ and $\operatorname{Rep} \mathrm{PSL}_2$ are symmetric tensor equivalent, the canonical algebra of $\operatorname{Rep} \mathrm{PSL}_2$ induces a~simple commutative algebra $W_{p,q}'$ in the Deligne product category $\operatorname{Ind}(\operatorname{Rep} \mathrm{PSL}_2\otimes\mathcal{C}_{\mathrm{PSL}_2})$ with decomposition
\begin{gather}\label{eqn:Wpq'-decomp-intro}
W_{p,q}'=V_{c_{p,q}}'\oplus\bigoplus_{n=2}^\infty V_{2n-2}\otimes\mathcal{L}_{2np-1,1},
\end{gather}
where $V_{2n-2}$ is the $(2n-1)$-dimensional simple continuous $\mathrm{PSL}_2(\mathbb{C})$-module. For information about canonical algebras, see, for example, \cite{EGNO}, or \cite{CKM2} for a more detailed exposition, where canonical algebras are constructed by ``gluing'' braid-reversed equivalent braided tensor categories. We then show that applying the forgetful fiber functor $\operatorname{Rep} \mathrm{PSL}_2\rightarrow\mathcal{V}ec$ to \eqref{eqn:Wpq'-decomp-intro} yields a simple commutative algebra in $\operatorname{Ind}(\mathcal{C}_{\mathrm{PSL}_2})$ with automorphism group $\mathrm{PSL}_2(\mathbb{C})$ and with the same $\mathcal{V}{\rm ir}$-module decomposition as \eqref{eqn:Wpq-decomp-intro}, except that $V_{c_{p,q}}$ is replaced by $V_{c_{p,q}}'$.
Next, we explain how to use the unique (up to scaling) non-zero $\mathcal{V}{\rm ir}$-homomorphism $V_{c_{p,q}}'\rightarrow V_{c_{p,q}}$ to turn the simple commutative algebra structure on $W_{p,q}'$ into a commutative algebra structure on the direct sum on the right-hand side of \eqref{eqn:Wpq-decomp-intro} with a unique simple ideal and simple quotient.

We then have to show that the commutative algebra structure we have obtained on the right-hand side of \eqref{eqn:Wpq-decomp-intro} yields the same VOA $W_{p,q}$ of \cite{FGST1-Log} under the correspondence between commutative algebras and VOAs from \cite{HKL}. Thus we show that $W_{p,q}'$ is the unique simple commutative algebra structure on the direct sum in \eqref{eqn:Wpq'-decomp-intro} and use this to prove that the algebra structure on \eqref{eqn:Wpq-decomp-intro} is also unique if it has a suitable simple ideal. Since the triplet algebra~$W_{p,q}$ has such a simple ideal, it follows that the algebra we have constructed is indeed the triplet algebra, and moreover, we can show that the automorphism group of $W_{p,q}$ is $\mathrm{PSL}_2(\mathbb{C})$ by transfer from~$W_{p,q}'$. These results are formally stated in Theorem~\ref{thm1} and Corollary~\ref{cor:Wpq-aut-gp}.

Finally, in Section~\ref{sec:Vir-trip-relations}, we use the VOA extension theory of \cite{CKM1, CMY1,HKL} to discuss tensor-categorical relations between the Virasoro category $\mathcal{O}_{c_{p,q}}$ and the triplet category $\operatorname{Rep}(W_{p,q})$. Using the induction tensor functor $F$ from $\mathcal{O}_{c_{p,q}}$ to the category of ``non-local'', or ``twisted'', $W_{p,q}$-modules, we define the category $\mathcal{O}_{c_{p,q}}^0$ to be the full subcategory of $\mathcal{O}_{c_{p,q}}$ consisting of modules which induce to ordinary modules for the triplet algebra, that is, objects of $\operatorname{Rep}(W_{p,q})$. We then show that when restricted to this subcategory, $F$ defines a fully faithful tensor functor from \smash{$\mathcal{O}_{c_{p,q}}^0$} to the $\mathrm{PSL}_2(\mathbb{C})$-equivariantization of $\operatorname{Rep}(W_{p,q})$. However, unlike in the $q=1$ case considered in \cite[Section~7]{MY-cp1-Vir}, induction does not give an equivalence between \smash{$\mathcal{O}_{c_{p,q}}^0$} and the equivariantization \smash{$\operatorname{Rep}(W_{p,q})^{\mathrm{PSL}_2(\mathbb{C})}$} because $W_{p,q}$ is not simple.

The main result of Section~\ref{sec:Vir-trip-relations} is that $\mathcal{O}_{c_{p,q}}^0$ contains all simple objects of $\mathcal{O}_{c_{p,q}}$ and is a ribbon Grothendieck--Verdier category in the sense of \cite{BD}, where the Grothendieck--Verdier duality (a~weaker duality structure than rigidity) is given by contragredient modules (see \cite{ALSW}). The main difficulty is to show that \smash{$\mathcal{O}_{c_{p,q}}^0$} is actually closed under taking contragredients. We conjecture that \smash{$\mathcal{O}_{c_{p,q}}^0$}, unlike the larger category \smash{$\mathcal{O}_{c_{p,q}}$}, has enough projective objects. If so, then it is natural to conjecture that \smash{$\mathcal{O}_{c_{p,q}}^0$} is the correct category of $\mathcal{V}{\rm ir}$-modules to use to build a full logarithmic minimal model conformal field theory.

\section{Some Virasoro fusion rules}\label{sec:Vir-fus-rules}

To give a novel construction of the $W_{p,q}$ triplet VOA, without using screening operators \cite{FGST1-Log, FGST2-KL-dual}, we first need to calculate the fusion rules for the Virasoro modules which appear in the decomposition of $W_{p,q}$ as a Virasoro module. Thus let $\mathcal{V}{\rm ir}$ be the Virasoro Lie algebra with basis $\lbrace L_n\mid n\in\mathbb{Z}\rbrace\cup\lbrace\mathbf{c}\rbrace$, where $\mathbf{c}$ is central and
\begin{gather*}
[L_m,L_n]=(m-n)L_{m+n} +\frac{m^3-m}{12}\delta_{m+n,0}\mathbf{c}
\end{gather*}
for $m,n\in\mathbb{Z}$. In this paper, we will only consider $\mathcal{V}{\rm ir}$-modules on which $\mathbf{c}$ acts by the central charge
\smash{$
c_{p,q}=1-\frac{6(p-q)^2}{pq}
$}
for some coprime $p,q\in\mathbb{Z}_{\geq 2}$.

Let $V_{c_{p,q}}$ be the universal Virasoro VOA of central charge $c_{p,q}$ \cite{Frenkel-YZhu}. By \cite{CJORY}, the category~$\mathcal{O}_{c_{p,q}}$ of $C_1$-cofinite $V_{c_{p,q}}$-modules admits the vertex algebraic braided tensor category structure of~\cite{HLZ1,HLZ2,HLZ3,HLZ4,HLZ5,HLZ6,HLZ7,HLZ8}. More specifically, $\mathcal{O}_{c_{p,q}}$ is the category of finite-length $\mathcal{V}{\rm ir}$-modules of central charge $c_{p,q}$ whose composition factors are of the form $\mathcal{L}_{r,s}$ for some $r,s\in\mathbb{Z}_{\geq 1}$. Here, $\mathcal{L}_{r,s}$ is the simple highest-weight $\mathcal{V}{\rm ir}$-module of central charge $c_{p,q}$ whose lowest $L_0$-eigenvalue is
\begin{gather*}
h_{r,s} =\frac{r^2-1}{4}\cdot\frac{q}{p}-\frac{rs-1}{2}+\frac{s^2-1}{4}\cdot\frac{p}{q}.
\end{gather*}
Note the symmetries $h_{r,s}=h_{r+p,s+q}$ and $h_{r,s}=h_{-r-s}$, which imply that each simple object of~$\mathcal{O}_{c_{p,q}}$ is isomorphic to a unique $\mathcal{L}_{r,s}$ such that $r\in\mathbb{Z}_{\geq 1}$, $1\leq s\leq q$, and $qr\geq ps$.

In \cite{MS-cpq-Vir}, we determined some of the braided tensor category structure of $\mathcal{O}_{c_{p,q}}$, focusing mainly on the Virasoro Kac modules $\mathcal{K}_{r,s}$ defined in \cite{MDRR}. For each $r,s\in\mathbb{Z}_{\geq 1}$, $\mathcal{K}_{r,s}$ is the submodule of a Feigin--Fuchs module \cite{Feigin--Fuchs} (of lowest $L_0$-eigenvalue $h_{r,s}$) generated by all vectors with $L_0$-eigenvalue strictly less than $h_{r,s}+rs$. Thus unlike the simple modules $\mathcal{L}_{r,s}$, we do not necessarily have $\mathcal{K}_{r,s} = \mathcal{K}_{r',s'}$ if $h_{r,s}=h_{r',s'}$; instead, $\mathcal{K}_{r,s}$ is a proper submodule of $\mathcal{K}_{r+p,s+q}$, for example.
As special cases, there are non-split short exact sequences
\begin{gather}
0\longrightarrow \mathcal{L}_{(m+2)p-r,1} \longrightarrow \mathcal{K}_{mp+r,1} \longrightarrow \mathcal{L}_{mp+r,1} \longrightarrow 0,\nonumber\\
0\longrightarrow \mathcal{L}_{1,(n+2)q-s}\longrightarrow \mathcal{K}_{1,nq+s} \longrightarrow \mathcal{L}_{1,nq+s}\longrightarrow 0\label{eqn:Kac-1}
\end{gather}
for $m,n\in\mathbb{Z}_{\geq 0}$, $1\leq r\leq p-1$, and $1\leq s\leq q-1$.
As a $\mathcal{V}{\rm ir}$-module, the universal Virasoro VOA $V_{c_{p,q}}$ is isomorphic to $\mathcal{K}_{1,1}$.
In particular, by \eqref{eqn:Kac-1}, the unique simple $\mathcal{V}{\rm ir}$-module quotient of $V_{c_{p,q}}$ is $\mathcal{L}_{1,1}$, and the unique simple $\mathcal{V}{\rm ir}$-submodule is $\mathcal{L}_{2p-1,1}=\mathcal{L}_{1,2q-1}$.

In \cite{MS-cpq-Vir}, we used Belavin--Polyakov--Zamolodchikov differential equations to show that $\mathcal{K}_{1,2}$ is rigid and self-dual in the tensor category $\mathcal{O}_{c_{p,q}}$, which implies in particular that the tensoring functors $\mathcal{K}_{1,2}\boxtimes\bullet$ and $\bullet\boxtimes\mathcal{K}_{1,2}$ are exact (see, for example, \cite[Proposition 4.2.1]{EGNO}), and that
\begin{gather}\label{eqn:rigid_hom_iso}
\operatorname{Hom}_{\mathcal{V}{\rm ir}}(\mathcal{K}_{1,2}\boxtimes W_1, W_2)\cong\operatorname{Hom}_{\mathcal{V}{\rm ir}}(W_1,\mathcal{K}_{1,2}\boxtimes W_2)
\end{gather}
for any modules $W_1$, $W_2$ in \smash{$\mathcal{O}_{c_{p,q}}$} (see, for example, \cite[Proposition 2.10.8]{EGNO}). Due to the symmetry~${c_{p,q}=c_{q,p}}$, the same results hold for $\mathcal{K}_{2,1}$.

In \cite{MS-cpq-Vir}, we also completely determined the fusion products $\mathcal{K}_{1,2}\boxtimes\mathcal{K}_{r,s}$ in $\mathcal{O}_{c_{p,q}}$ for all $r,s\in\mathbb{Z}_{\geq 1}$. In particular, there is a short exact sequence
 \begin{gather}\label{eqn:K12xKrs}
 0\longrightarrow\mathcal{K}_{r,s-1}\longrightarrow\mathcal{K}_{1,2}\boxtimes\mathcal{K}_{r,s}\longrightarrow\mathcal{K}_{r,s+1}\longrightarrow 0,
 \end{gather}
which splits if and only if $q\nmid s$. In our proof of this result, the surjection $\mathcal{K}_{1,2}\boxtimes\mathcal{K}_{r,s}\rightarrow\mathcal{K}_{r,s+1}$ comes from the intertwining operator of type \smash{$\binom{\mathcal{K}_{r,s+1}}{\mathcal{K}_{1,2} \mathcal{K}_{r,s}}$} obtained by restricting a Heisenberg Fock module intertwining operator involving the corresponding Feigin--Fuchs modules to their Kac submodules. The existence of a non-zero map $\mathcal{K}_{r,s-1}\rightarrow\mathcal{K}_{1,2}\boxtimes\mathcal{K}_{r,s}$ then follows from the~${W_1=\mathcal{K}_{r,s-1}}$, $W_2=\mathcal{K}_{r,s}$ case of \eqref{eqn:rigid_hom_iso}. It is not easy to show that the resulting sequence of maps \eqref{eqn:K12xKrs} is exact, but once this is done, it is then clear that $\mathcal{K}_{1,2}\boxtimes\mathcal{K}_{r,s}\cong\mathcal{K}_{r,s-1}\oplus\mathcal{K}_{r,s+1}$ when $q\nmid s$, because in this case the conformal weights of $\mathcal{K}_{r,s-1}$ and $\mathcal{K}_{r,s+1}$ are non-congruent mod $\mathbb{Z}$ and thus a non-trivial extension is impossible. When $q\mid s$, on the other hand, it turns out that $\mathcal{K}_{1,2}\boxtimes\mathcal{K}_{r,s}$ is a logarithmic extension of $\mathcal{K}_{r,s+1}$ by $\mathcal{K}_{r,s-1}$. See \cite{MS-cpq-Vir} for more details.

 We also showed in \cite[Theorem 6.7]{MS-cpq-Vir} that
\begin{gather}\label{eqn:Kr1xK1s}
\mathcal{K}_{r,1}\boxtimes\mathcal{K}_{1,s}\cong\mathcal{K}_{r,s}
\end{gather}
for all $r,s\in\mathbb{Z}_{\geq 1}$. The map $\mathcal{K}_{r,1}\boxtimes\mathcal{K}_{1,s}\rightarrow\mathcal{K}_{r,s}$ again comes from restricting a Heisenberg Fock module intertwining operator involving Feigin--Fuchs modules to their Kac submodules, though the proof that it is an isomorphism is non-trivial and uses the exact sequence \eqref{eqn:K12xKrs}. The exact sequence \eqref{eqn:K12xKrs} can also be used to compute $\mathcal{K}_{1,2}\boxtimes\mathcal{L}_{r,s}$ for all $r,s\in\mathbb{Z}_{\geq 1}$, since every simple module $\mathcal{L}_{r,s}$ has a resolution by Kac modules (this follows from \eqref{eqn:Kac-1} in the $s=1$ and $r=1$ cases); see \cite[Theorem 6.8]{MS-cpq-Vir} for details.

As we will discuss in more detail later, the simple $\mathcal{V}{\rm ir}$-submodules of the triplet vertex algebra~$W_{p,q}$ will come from among the modules $\mathcal{L}_{np-1,1}=\mathcal{L}_{1,nq-1}$ for $n\in\mathbb{Z}_{\geq 2}$ (actually, for the~$\mathcal{V}{\rm ir}$-submodules of $W_{p,q}$, we will only need $n$ even). At irrational central charges, the fusion rules for such simple modules were determined in \cite{Frenkel-MZhu} using Zhu algebra methods, while at central charge~$c_{1,q}$, their fusion rules were determined in \cite{MY-cp1-Vir} using the action of ${\rm SU}(2)$ by automorphisms on the doublet algebra, an abelian intertwining algebra extension of the triplet algebra $W_{1,q}$. But at central charge $c_{p,q}$ with $p\geq 2$, Zhu algebra computations seem to be difficult, and it is not \textit{a priori} clear that ${\rm SU}(2)$ acts on the triplet or doublet algebra. So instead, we prove the fusion rules using properties of Virasoro Kac modules derived in~\cite{MS-cpq-Vir}:

\begin{Theorem}\label{thm:Vir-fus-rules}
In the tensor category $\mathcal{O}_{c_{p,q}}$ for coprime $p,q\in\mathbb{Z}_{\geq 2}$,
\begin{gather*}
\mathcal{L}_{mp-1,1}\boxtimes\mathcal{L}_{np-1,1}\cong\bigoplus_{\substack{i=\vert m-n\vert +2\\
i+m+n\equiv 0 \mod 2}}^{m+n-2} \mathcal{L}_{ip-1,1}
\end{gather*}
for $m,n\in\mathbb{Z}_{\geq 2}$ such that $m\neq n$, while
\begin{gather*}
\mathcal{L}_{np-1,1}\boxtimes\mathcal{L}_{np-1,1}\cong\mathcal{K}_{1,1}'\oplus\bigoplus_{j=2}^{n-1} \mathcal{L}_{2jp-1,1}
\end{gather*}
for $n\in\mathbb{Z}_{\geq 2}$, where $\mathcal{K}_{1,1}'$ is the contragredient dual of $\mathcal{K}_{1,1}$.
\end{Theorem}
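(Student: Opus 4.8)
The plan is to prove both fusion rules by induction on $\min(m,n)$ using the ``building block'' module $\mathcal{K}_{2,1}$, whose fusion with an arbitrary module is well understood by the $q\leftrightarrow p$-symmetric version of \eqref{eqn:K12xKrs}. The starting observation is that $\mathcal{L}_{2p-1,1}$ appears as a subquotient of $\mathcal{K}_{2,1}\boxtimes\mathcal{K}_{2,1}$, and more generally one wants to realize the modules $\mathcal{L}_{np-1,1}$ recursively inside iterated fusion powers of $\mathcal{K}_{2,1}$. Concretely, I would first establish a lemma describing $\mathcal{K}_{2,1}\boxtimes\mathcal{L}_{np-1,1}$: using exactness of $\mathcal{K}_{2,1}\boxtimes\bullet$ (from rigidity of $\mathcal{K}_{2,1}$ noted in the excerpt), apply $\mathcal{K}_{2,1}\boxtimes\bullet$ to the short exact sequence \eqref{eqn:Kac-1} presenting $\mathcal{K}_{np-1,1}$ in terms of $\mathcal{L}_{np-1,1}$ and $\mathcal{L}_{(n+2)p-1,1}$ wait — more precisely $\mathcal{L}_{(n'+2)p-r,1}$ — and combine this with the known fusion product $\mathcal{K}_{2,1}\boxtimes\mathcal{K}_{np-1,1}$ (the $p$-analogue of \eqref{eqn:K12xKrs}, together with the splitting criterion $p\nmid s$, here applied with the roles reversed so that the relevant divisibility always holds since the second index is $1$) to extract that $\mathcal{K}_{2,1}\boxtimes\mathcal{L}_{np-1,1}\cong\mathcal{L}_{(n-1)p-1,1}\oplus\mathcal{L}_{(n+1)p-1,1}$ for $n\geq 2$ (with an appropriate modification involving $\mathcal{K}_{1,1}$ or its contragredient when $n=2$, since $\mathcal{L}_{p-1,1}$ is not among our modules and $\mathcal{K}_{2,1}\boxtimes\mathcal{K}_{1,1}=\mathcal{K}_{2,1}$).

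With this lemma in hand, the inductive step is the $\mathfrak{sl}_2$-Clebsch–Gordan bookkeeping. Assume the formula for $\mathcal{L}_{mp-1,1}\boxtimes\mathcal{L}_{np-1,1}$ is known for all pairs with smaller value of $m$ (say $m\leq n$), and compute $\mathcal{L}_{(m+1)p-1,1}\boxtimes\mathcal{L}_{np-1,1}$ by first writing $\mathcal{L}_{(m+1)p-1,1}$ as the summand of $\mathcal{K}_{2,1}\boxtimes\mathcal{L}_{mp-1,1}$ complementary to $\mathcal{L}_{(m-1)p-1,1}$, then using associativity of $\boxtimes$:
\begin{equation*}
\mathcal{K}_{2,1}\boxtimes\bigl(\mathcal{L}_{mp-1,1}\boxtimes\mathcal{L}_{np-1,1}\bigr)\cong\bigl(\mathcal{K}_{2,1}\boxtimes\mathcal{L}_{mp-1,1}\bigr)\boxtimes\mathcal{L}_{np-1,1}\cong\bigl(\mathcal{L}_{(m-1)p-1,1}\oplus\mathcal{L}_{(m+1)p-1,1}\bigr)\boxtimes\mathcal{L}_{np-1,1}.
\end{equation*}
Expanding the left side via the lemma and the inductive hypothesis for $\mathcal{L}_{mp-1,1}\boxtimes\mathcal{L}_{np-1,1}$, and expanding $\mathcal{L}_{(m-1)p-1,1}\boxtimes\mathcal{L}_{np-1,1}$ on the right by induction, one solves for $\mathcal{L}_{(m+1)p-1,1}\boxtimes\mathcal{L}_{np-1,1}$ as the ``difference'' of two multiplicity-free direct sums; the telescoping exactly reproduces the claimed Clebsch–Gordan range. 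The diagonal case $n=m$ and the near-diagonal cases $|m-n|\in\{0,1,2\}$ need separate care because they are where the unit-adjacent modules $\mathcal{K}_{1,1}$ and $\mathcal{K}_{1,1}'$ enter: here one uses \eqref{eqn:rigid_hom_iso}-type adjunction together with the fact that $\mathrm{Hom}_{\mathcal{V}ir}(\mathcal{L}_{np-1,1}\boxtimes\mathcal{L}_{np-1,1},\mathcal{K}_{1,1}')\cong\mathrm{Hom}_{\mathcal{V}ir}(\mathcal{L}_{np-1,1},\mathcal{L}_{np-1,1})$ is one-dimensional (since $\mathcal{K}_{1,1}'$ has simple socle $\mathcal{L}_{1,1}$ wait — simple top; one pairs against the right module), pinning down that $\mathcal{K}_{1,1}'$ rather than $\mathcal{K}_{1,1}$ or $\mathcal{L}_{1,1}$ appears, and with the correct multiplicity one.

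The main obstacle I anticipate is controlling \emph{self-extensions and splitting} in these intermediate computations: a priori $\mathcal{K}_{2,1}\boxtimes(\text{something})$ could produce non-split extensions among the $\mathcal{L}_{ip-1,1}$ or between them and $\mathcal{K}_{1,1}$, and the ``difference of direct sums'' argument only yields the answer up to composition series unless one knows semisimplicity. To handle this I would argue that each $\mathcal{L}_{ip-1,1}$ is projective and injective \emph{within the relevant subcategory} — e.g.\ that $\mathrm{Ext}^1_{\mathcal{V}ir}(\mathcal{L}_{ip-1,1},\mathcal{L}_{jp-1,1})=0$ for all $i,j\geq 2$, which should follow from conformal-weight/central-character considerations or from the structure of Feigin–Fuchs modules, since distinct such simples have distinct ``atypicality blocks'' — and separately that any extension of $\mathcal{L}_{ip-1,1}$ by $\mathcal{K}_{1,1}$ or its contragredient that could arise is forced to split because of the $L_0$-grading (the relevant conformal weights are too far apart, or the extension would contradict the known socle/top of $\mathcal{K}_{1,1}$). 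Once semisimplicity of all the fusion products in question is in place, the Grothendieck-ring-level Clebsch–Gordan computation upgrades to an honest direct-sum decomposition, and the induction closes.
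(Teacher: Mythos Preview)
Your proposed key lemma --- that $\mathcal{K}_{2,1}\boxtimes\mathcal{L}_{np-1,1}\cong\mathcal{L}_{(n-1)p-1,1}\oplus\mathcal{L}_{(n+1)p-1,1}$ --- is false, and this breaks the inductive scheme at the first step. The $p\leftrightarrow q$ analogue of \eqref{eqn:K12xKrs} says that tensoring with $\mathcal{K}_{2,1}$ shifts the \emph{first Kac label} by $\pm 1$, not the parameter $n$ in $np-1$. Concretely, $\mathcal{K}_{2,1}\boxtimes\mathcal{K}_{np-1,1}\cong\mathcal{K}_{np-2,1}\oplus\mathcal{K}_{np,1}$ (the sequence splits because $p\nmid np-1$), and the composition factors here are $\mathcal{L}_{np-2,1}$, $\mathcal{L}_{np+2,1}$, $\mathcal{L}_{np,1}$, none of which is $\mathcal{L}_{(n\pm1)p-1,1}$. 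There is no small rigid Kac module in $\mathcal{O}_{c_{p,q}}$ whose fusion sends $n\mapsto n\pm 1$ within the family $\{\mathcal{L}_{np-1,1}\}$; the object that would do so is $\mathcal{L}_{3p-1,1}$ itself, but its fusion rules are exactly what you are trying to prove, so any such recursion is circular. Iterating $\mathcal{K}_{2,1}$ $p$ times is not a viable substitute either: the intermediate steps leave the family $\{\mathcal{L}_{np-1,1}\}$ and involve many other simples and non-split extensions, so the Clebsch--Gordan bookkeeping you envision does not close.

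The paper's argument is structurally different. It exploits the coincidence $\mathcal{L}_{np-1,1}=\mathcal{L}_{1,nq-1}$ and \eqref{eqn:Kr1xK1s} to produce, via right exactness of $\boxtimes$, a single surjection $\mathcal{K}_{mp-1,nq-1}\twoheadrightarrow\mathcal{L}_{mp-1,1}\boxtimes\mathcal{L}_{np-1,1}$ from one large Kac module. The substance of the proof is then a detailed analysis of the socle filtration of $\mathcal{K}_{mp-1,nq-1}$ (displayed in \eqref{eqn:K-diag}) to determine exactly which quotient survives: one shows the kernel contains the full middle socle layer (except $\mathcal{L}_{1,1}$ when $m=n$) by ruling out, case by case, certain indecomposable length-two and length-three quotients $Q_i$, $P_j$ as images of the fusion product, using further Kac-module fusions and intertwining-operator symmetries. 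Your worry about splitting is warranted and is resolved by this structural analysis rather than by any blanket $\mathrm{Ext}^1$-vanishing; indeed $\mathrm{Ext}^1_{\mathcal{V}ir}(\mathcal{L}_{2p-1,1},\mathcal{L}_{1,1})\neq 0$, which is precisely why $\mathcal{K}_{1,1}'$ rather than $\mathcal{L}_{1,1}\oplus\mathcal{L}_{2p-1,1}$ shows up in the diagonal case.
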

\begin{proof}
Since the tensor product on $\mathcal{O}_{c_{p,q}}$ is commutative, and since the tensor product formulas in the theorem statement are symmetric in $m$ and $n$, we assume throughout the proof that $m\geq n$. We will use the short exact sequences
\begin{align*}
0 \longrightarrow \mathcal{L}_{mp+1, 1} & \longrightarrow \mathcal{K}_{mp-1, 1} \longrightarrow \mathcal{L}_{mp-1, 1} \longrightarrow 0,\\
0 \longrightarrow \mathcal{L}_{1, nq+1} & \longrightarrow \mathcal{K}_{1, nq-1} \longrightarrow \mathcal{L}_{1, nq-1} \longrightarrow 0,
\end{align*}
which are special cases of \eqref{eqn:Kac-1}. Equation \eqref{eqn:Kac-1} also gives surjections $\mathcal{K}_{mp+1,1}\twoheadrightarrow\mathcal{L}_{mp+1,1}$ and $\mathcal{K}_{1,nq+1}\twoheadrightarrow\mathcal{L}_{1,nq+1}$. So noting that $\mathcal{L}_{1,nq-1}=\mathcal{L}_{np-1,1}$, we have right exact sequences
\begin{align*}
\begin{split}
&\mathcal{K}_{mp+1, 1} \longrightarrow \mathcal{K}_{mp-1, 1} \longrightarrow \mathcal{L}_{mp-1, 1} \longrightarrow 0,\\
&\mathcal{K}_{1, nq+1} \longrightarrow \mathcal{K}_{1, nq-1} \longrightarrow \mathcal{L}_{np-1,1} \longrightarrow 0.
\end{split}
\end{align*}
As the tensor product on $\mathcal{O}_{c_{p,q}}$ is right exact, we can tensor these two right exact sequences into a single right exact sequence using \eqref{eqn:Kr1xK1s}:
\begin{gather}\label{eqn:right-exact}
\mathcal{K}_{mp+1,nq-1}\oplus\mathcal{K}_{mp-1,nq+1}\longrightarrow\mathcal{K}_{mp-1,nq-1}\longrightarrow\mathcal{L}_{mp-1,1}\boxtimes\mathcal{L}_{np-1,1}\longrightarrow 0
\end{gather}
(see, for example, \cite[Lemma 2.5]{McR-Deligne}).

In particular, $\mathcal{L}_{mp-1,1}\boxtimes\mathcal{L}_{np-1,1}$ is some quotient of $\mathcal{K}_{mp-1,nq-1}$. Using the diagrams in \cite[Section 2.3]{MS-cpq-Vir}, as well as the conformal weight symmetries $h_{p+r,q+s}=h_{r,s}$ and $h_{-r,-s}=h_{r,s}$ for~${r,s\in\mathbb{Z}}$, $\mathcal{K}_{mp-1,nq-1}$ has the following composition series structure:
\begin{gather}\label{eqn:K-diag}
\begin{matrix}
\begin{tikzpicture}[->,>=latex,scale=1.2]
\node at (-5,3.75){$\mathcal{K}_{mp-1,nq-1}$:};
\node(m0) at (0,7.5){$\mathcal{L}_{1,(m-n)q+1}$};
\node(s1) at (-2,6.5){$\mathcal{L}_{p-1,(m-n+1)q+1}$};
\node(t1) at (2,6.5){$\mathcal{L}_{(m-n+2)p-1,1}$};
\node(m1) at (-2,5.5){$\mathcal{L}_{(m-n+2)p+1,1}$};
\node(m2) at (2,5.5){$\mathcal{L}_{1,(m-n+2)q+1}$};
\node(s2) at (-2,4.5){$\mathcal{L}_{p-1,(m-n+3)q+1}$};
\node(t2) at (2,4.5){$\mathcal{L}_{(m-n+4)p-1,1}$};
\node(m3) at (-2,3.5){$\mathcal{L}_{(m-n+4)p+1,1}$};
\node(m4) at (2,3.5){$\mathcal{L}_{1,(m-n+4)q+1}$};
\node(d1) at (-2,3){$\vdots$};
\node(d1') at (-.9,2.7){};
\node(d2) at (2,3){$\vdots$};
\node(d2') at (.9,2.7){};
\node(s3) at (-2,2){$\mathcal{L}_{p-1,(m+n-3)q+1}$};
\node(t3) at (2,2){$\mathcal{L}_{(m+n-2)p-1,1}$};
\node(m5) at (-2,1){$\mathcal{L}_{(m+n-2)p+1,1}$};
\node(m6) at (2,1){$\mathcal{L}_{1,(m+n-2)q+1}$};
\node(s4) at (0,0){$\mathcal{L}_{p-1,(m+n-1)q+1}$.};
\draw[] (t1) -- node[left]{} (m0);
\draw[] (t1) -- node[left]{} (m1);
\draw[] (t1) -- node[left]{} (m2);
\draw[] (t2) -- node[left]{} (m1);
\draw[] (t2) -- node[left]{} (m2);
\draw[] (t2) -- node[left]{} (m3);
\draw[] (t2) -- node[left]{} (m4);
\draw[] (t3) -- node[left]{} (d1');
\draw[] (t3) -- node[left]{} (d2);
\draw[] (t3) -- node[left]{} (m5);
\draw[] (t3) -- node[left]{} (m6);
\draw[] (m0) -- node[left]{} (s1);
\draw[] (m1) -- node[left]{} (s1);
\draw[] (m2) -- node[left]{} (s1);
\draw[] (m1) -- node[left]{} (s2);
\draw[] (m2) -- node[left]{} (s2);
\draw[] (m3) -- node[left]{} (s2);
\draw[] (d1) -- node[left]{} (s3);
\draw[] (m4) -- node[left]{} (s2);
\draw[] (d2') -- node[left]{} (s3);
\draw[] (m5) -- node[left]{} (s3);
\draw[] (m5) -- node[left]{} (s4);
\draw[] (m6) -- node[left]{} (s4);
\draw[] (m6) -- node[left]{} (s3);
\end{tikzpicture}
\end{matrix}
\end{gather}
All simple $\mathcal{V}{\rm ir}$-modules appearing in this diagram are distinct. The simple modules which only receive arrows are the composition factors of the socle of $\mathcal{K}_{mp-1,nq-1}$, the simple modules which both receive and originate arrows are the composition factors of the middle layer of the socle series, and the simple modules which only originate arrows are the composition factors of the top layer of the socle series. Each arrow in the diagram signifies an indecomposable subquotient of $\mathcal{K}_{mp-1,nq-1}$ of length $2$.

From \eqref{eqn:K-diag}, the simple quotients of $\mathcal{K}_{mp-1,nq-1}$ are precisely the simple quotients claimed for~$\mathcal{L}_{mp-1,1}\boxtimes\mathcal{L}_{np-1,1}$ in the theorem statement. Also, for $m=n$, $\mathcal{K}_{np-1,nq-1}$ has a quotient~$\widetilde{\mathcal{K}}_{1,1}$ of length $2$ with a non-split short exact sequence
\begin{gather*}
0\longrightarrow\mathcal{L}_{1,1}\longrightarrow\widetilde{\mathcal{K}}_{1,1}\longrightarrow\mathcal{L}_{2p-1,1}\longrightarrow 0.
\end{gather*}
Taking the contragredient of this short exact sequence and observing that $h_{1,1}<h_{2p-1,1}$, we~see that $\widetilde{\mathcal{K}}_{1,1}'$ is generated by a highest-weight vector of conformal weight $h_{1,1}=0$ and thus is a~length $2$ quotient of the Virasoro Verma module of lowest conformal weight~$0$. By the same argument applied to~\eqref{eqn:Kac-1}, this length $2$ quotient must also be isomorphic to~$\mathcal{K}_{1,1}$, and it follows that $\widetilde{\mathcal{K}}_{1,1}\cong\mathcal{K}_{1,1}'$. Thus the formulas claimed for $\mathcal{L}_{mp-1,1}\boxtimes\mathcal{L}_{nq-1,1}$ in the theorem statement yield the quotient of $\mathcal{K}_{mp-1,nq-1}/M$, where $M$ is the submodule containing $\mathrm{Soc}(\mathcal{K}_{mp-1,nq-1})$ such that $M/\mathrm{Soc}(\mathcal{K}_{mp-1,nq-1})$ is the direct sum of all simple modules appearing in the middle layer of the socle series of $\mathcal{K}_{mp-1,nq-1}$ except for $\mathcal{L}_{1,1}$ in the $m=n$ case.

We now show that the kernel $K$ of the surjection $\mathcal{K}_{mp-1,nq-1}\twoheadrightarrow\mathcal{L}_{mp-1,1}\boxtimes\mathcal{L}_{np-1,1}$ from
 the right exact sequence \eqref{eqn:right-exact} is contained in $M$, so that we get a surjection
 \begin{gather}\label{eqn:fus-rule-surj}
 \mathcal{L}_{mp-1,1}\boxtimes\mathcal{L}_{np-1,1}\twoheadrightarrow\mathcal{K}_{mp-1,nq-1}/M.
 \end{gather}
 Indeed, by considering the composition
 \begin{gather*}
 \mathcal{K}_{mp+1,nq-1}\oplus\mathcal{K}_{mp-1,nq+1}\twoheadrightarrow K\twoheadrightarrow K/(K\cap M)\hookrightarrow \mathcal{K}_{mp-1,nq-1}/M,
 \end{gather*}
 we see that any simple quotient of $K/(K\cap M)$ is a homomorphic image of either $\mathcal{K}_{mp+1,nq-1}$ or~$\mathcal{K}_{mp-1,nq+1}$ and is also a composition factor of $\mathcal{K}_{mp-1,nq-1}/M$. However, the diagrams in \cite[Section 2.3]{MS-cpq-Vir} show that the simple quotients of $\mathcal{K}_{mp+1,nq-1}$ are
\begin{gather*}
\mathcal{L}_{(m-n+2)p+1,1},\mathcal{L}_{(m-n+4)p+1,1},\ldots,\mathcal{L}_{(m+n-2)p+1,1},
\end{gather*}
and the simple quotients of $\mathcal{K}_{mp-1,nq+1}$ are
\begin{gather}
\mathcal{L}_{1,(m-n)q+1},\mathcal{L}_{1,(m-n+2)q+1},\ldots,\mathcal{L}_{1,(m+n-2)q+1}\qquad \text{if}\quad m>n,\nonumber\\
\mathcal{L}_{1,2q+1},\mathcal{L}_{1,4q+1},\ldots,\mathcal{L}_{1,(2n-2)q+1}\qquad \text{if} \quad m=n.\label{eqn:comp-factors}
\end{gather}
Thus from \eqref{eqn:K-diag}, combining the simple quotients of $\mathcal{K}_{mp+1,nq-1}$ and $\mathcal{K}_{mp-1,nq+1}$ yields all composition factors of the middle socle layer of $\mathcal{K}_{mp-1,nq-1}$, except for $\mathcal{L}_{1,1}$ in the $m=n$ case. This means the simple quotients of $\mathcal{K}_{mp+1,nq-1}$ and $\mathcal{K}_{mp-1,nq+1}$ are disjoint from the composition factors of $\mathcal{K}_{mp-1,nq-1}/M$, showing that $K/(K\cap M)$ has no simple quotients. Thus $K\cap M = K$, that is, $K\subseteq M$. This establishes the surjection \eqref{eqn:fus-rule-surj}.

We still need to show that \eqref{eqn:fus-rule-surj} is an isomorphism, that is, $M=K$. It is enough to show that $K$ contains all composition factors in the middle socle layer of $\mathcal{K}_{mp-1,nq-1}$, except for $\mathcal{L}_{1,1}$ in the $m=n$ case. Again, from \eqref{eqn:K-diag}, the composition factors of the middle socle layer are
\begin{align*}
&\mathcal{L}_{1,iq+1},\qquad m-n\leq i\leq m+n-2,\qquad i+m+n\equiv 0 \mod 2,\\
 &\mathcal{L}_{jp+1,1}, \qquad m-n+2\leq j\leq m+n-2,\qquad j+m+n\equiv 0 \mod 2.
\end{align*}
For any of these simple modules $W=\mathcal{L}_{1,iq+1}$ or $\mathcal{L}_{jp+1,1}$, let $\widetilde{W}\subseteq\mathcal{K}_{mp-1,nq-1}$ be the submodule containing $\mathrm{Soc}(\mathcal{K}_{mp-1,nq-1})$ such that \smash{$\widetilde{W}/\mathrm{Soc}(\mathcal{K}_{mp-1,nq-1})$} is the direct sum of all composition factors of the middle socle layer except for $W$.
Thus if $W$ occurs as a composition factor of~$\mathcal{L}_{mp-1,1}\boxtimes\mathcal{L}_{np-1,1}$, then $K\subseteq\widetilde{W}$, and hence there is a surjection
\begin{gather*}
\mathcal{L}_{mp-1,1}\boxtimes\mathcal{L}_{np-1,1}\twoheadrightarrow\mathcal{K}_{mp-1,nq-1}/\widetilde{W}.
\end{gather*}
We denote the indecomposable summand of $\mathcal{K}_{mp-1,nq-1}/\widetilde{W}$ that contains $W$ as a submodule by~$Q_i$ if $W=\mathcal{L}_{1,iq+1}$ and by $P_j$ if $W=\mathcal{L}_{jp+1,1}$. Thus to complete the proof of the theorem, it is enough to show that there is no surjection $\mathcal{L}_{mp-1,1}\boxtimes\mathcal{L}_{np-1,1}\twoheadrightarrow Q_i$ or $P_j$ for $m-n\leq i\leq m+n-2$, $i+m+n\equiv 0 \mod 2$ or $m-n+2\leq j\leq m+n-2$, $j+m+n\equiv 0 \mod 2$, except in the case that $m=n$ and $i=0$.

We first consider $Q_{m-n}$. By \eqref{eqn:K-diag}, there is a non-split exact sequence
\begin{gather*}
0\longrightarrow\mathcal{L}_{1,(m-n)q+1}\longrightarrow Q_{m-n}\longrightarrow\mathcal{L}_{1,(m-n+2)q-1}\longrightarrow 0.
\end{gather*}
Since $h_{1,(m-n)q+1}<h_{1,(m-n+2)q-1}$, the same argument as used previously in the $m=n$ case, using \eqref{eqn:Kac-1}, implies that \smash{$Q_{m-n}\cong\mathcal{K}_{1,(m-n)q+1}'$}. Thus using symmetries of vertex algebraic intertwining operators from \cite{FHL},
\begin{gather*}
\operatorname{Hom}(\mathcal{L}_{mp-1,1} \boxtimes \mathcal{L}_{np-1,1}, Q_{m-n}) \cong \operatorname{Hom}(\mathcal{L}_{mp-1,1} \boxtimes\mathcal{K}_{1,(m-n)q+1}, \mathcal{L}_{1,nq-1}).
\end{gather*}
So by \eqref{eqn:Kr1xK1s} and \eqref{eqn:Kac-1}, a surjection $\mathcal{L}_{mp-1,1}\boxtimes\mathcal{L}_{np-1,1}\twoheadrightarrow Q_{m-n}$ would induce a surjection
\begin{gather*}
\mathcal{K}_{mp-1,(m-n)q+1}\xrightarrow{\sim}\mathcal{K}_{mp-1,1}\boxtimes\mathcal{K}_{1,(m-n)q+1}\twoheadrightarrow \mathcal{L}_{mp-1,1}\boxtimes\mathcal{K}_{1,(m-n)q+1}\twoheadrightarrow\mathcal{L}_{1,nq-1}.
\end{gather*}
Such a surjection follows from \eqref{eqn:Kac-1} if $m=n$, but if $m>n$, then replacing $n\mapsto m-n$ in \eqref{eqn:comp-factors} shows that the simple quotients of $\mathcal{K}_{mp-1,(m-n)q+1}$ are given by
\begin{gather*}
\mathcal{L}_{1,nq+1},\mathcal{L}_{1,(n+2)q+1},\ldots,\mathcal{L}_{1,(2m-n-2)q+1}.
\end{gather*}
This set of modules does not include $\mathcal{L}_{1,nq-1}$, so $Q_{m-n}$ cannot be a simple quotient of $\mathcal{L}_{mp-1,1}\boxtimes\mathcal{L}_{np-1,1}$ except in the case $m=n$.

We next consider $Q_i$ for $m-n+2\leq i\leq m+n-2$, $i+m+n\equiv 0 \mod 2$. From \eqref{eqn:K-diag}, there is a length $2$ submodule $\widetilde{Q}_i\subseteq Q_i$ with a short exact sequence
\begin{gather*}
0\longrightarrow\mathcal{L}_{1,iq+1}\longrightarrow\widetilde{Q}_i\longrightarrow\mathcal{L}_{1,iq-1}\longrightarrow 0.
\end{gather*}
Comparing with \eqref{eqn:Kac-1}, $\widetilde{Q}_i\cong\mathcal{K}_{1,iq-1}$ since both must be the same length $2$ quotient of the Verma module of lowest conformal weight $h_{1,iq-1}$. Then \eqref{eqn:K-diag} again yields an exact sequence
\begin{gather*}
0\longrightarrow\mathcal{K}_{1,iq-1}\longrightarrow Q_i\longrightarrow\mathcal{L}_{1,(i+2)q-1}\longrightarrow 0,
\end{gather*}
when $m-n+2\leq i< m+n-2$, and $Q_{m+n-2}\cong\mathcal{K}_{1,(m+n-2)q-1}$. Now to show that $Q_i$ is not a~quotient of $\mathcal{L}_{mp-1,1}\boxtimes\mathcal{L}_{np-1,1}$, it is enough to show that there is no surjection
\begin{gather*}
\mathcal{K}_{1,2}\boxtimes\mathcal{L}_{mp-1,1}\boxtimes\mathcal{L}_{np-1,1}\twoheadrightarrow\mathcal{K}_{1,2}\boxtimes Q_i,
\end{gather*}
since $\boxtimes$ is right exact. Using \cite[Theorems 6.5 and 6.8]{MS-cpq-Vir} (see also \eqref{eqn:K12xKrs}), there is an exact sequence
\begin{gather*}
0 \longrightarrow \mathcal{K}_{1, iq-2} \oplus \mathcal{K}_{1, iq} \longrightarrow \mathcal{K}_{1, 2} \boxtimes Q_i \longrightarrow \mathcal{L}_{1, (i+2)q-2} \longrightarrow 0
\end{gather*}
for $m-n+2\leq i<m+n-2$, while
\begin{gather*}
\mathcal{K}_{1,2}\boxtimes Q_{m+n-2}\cong\mathcal{K}_{1,(m+n-2)q-2}\oplus\mathcal{K}_{1,(m+n-2)q}.
\end{gather*}
In either case, $\mathcal{K}_{1,iq}$ is a direct summand of $\mathcal{K}_{1,2}\boxtimes Q_i$ since $h_{1,iq}\neq h_{1,iq-2},h_{1,(i+2)q-2} \mod \mathbb{Z}$. Thus a surjection $\mathcal{L}_{1,mp-1}\boxtimes\mathcal{L}_{1,np-1}\twoheadrightarrow Q_i$ would induce a surjection
\begin{align*}
\mathcal{K}_{mp-1,nq-2}& \xrightarrow{\sim}\mathcal{K}_{mp-1,1}\boxtimes\mathcal{K}_{1,nq-2} \twoheadrightarrow\mathcal{L}_{mp-1,1}\boxtimes\mathcal{L}_{1,nq-2} \nonumber\\
& \xrightarrow{\sim}\mathcal{K}_{1,2}\boxtimes\mathcal{L}_{mp-1,1}\boxtimes\mathcal{L}_{1,nq-1}\twoheadrightarrow\mathcal{K}_{1,2}\boxtimes Q_i\twoheadrightarrow\mathcal{K}_{1,iq}.
\end{align*}
From the diagrams in \cite[Section 2.3]{MS-cpq-Vir}, it is clear that $\mathcal{K}_{1,iq}\cong\mathcal{L}_{1,iq}$, but that $\mathcal{K}_{mp-1,nq-2}$ has no simple quotient of the form $\mathcal{L}_{1,iq}$ if $q\nmid (nq-2)$. For the remaining $q=2$ case, $\mathcal{K}_{mp-1,nq-2}=\mathcal{K}_{mp-1,(n-1)q}$ has simple quotients
$
\mathcal{L}_{1,(m-n+1)q},\mathcal{L}_{1,(m-n+3)q},\ldots,\mathcal{L}_{1,(m+n-3)q}$,
and these do not include $\mathcal{L}_{1,iq}$ since $i+m+n\equiv 0 \mod 2$. This proves that $Q_i$ is not a quotient of $\mathcal{L}_{mp-1,1}\boxtimes\mathcal{L}_{np-1,1}$, and therefore $\mathcal{L}_{1,iq+1}$ is not a composition factor of $\mathcal{L}_{mp-1,1}\boxtimes\mathcal{L}_{np-1,1}$.

Finally, the case of $P_j$ for $m-n+2\leq j\leq m+n-2$, $j+m+n\equiv 0 \mod 2$ is similar by~${c_{p,q}=c_{q,p}}$ symmetry. In this case, \eqref{eqn:K-diag} and \eqref{eqn:Kac-1} yield a short exact sequence
\begin{gather*}
0\longrightarrow\mathcal{K}_{jp-1,1}\longrightarrow P_j\longrightarrow\mathcal{L}_{(j+2)p-1,1}\longrightarrow 0
\end{gather*}
when $m-n+2\leq j < m+n-2$, and $P_{m+n-2}\cong\mathcal{K}_{(m+n-2)p-1,1}$. Then similar to the $Q_i$ case, a~surjection $\mathcal{L}_{mp-1,1}\boxtimes\mathcal{L}_{np-1,1}\twoheadrightarrow P_j$ would induce a surjection
\begin{align*}
\mathcal{K}_{mp-2,nq-1} &\xrightarrow{\sim}\mathcal{K}_{mp-2,1}\boxtimes\mathcal{K}_{1,nq-1} \twoheadrightarrow\mathcal{L}_{mp-2,1}\boxtimes\mathcal{L}_{1,nq-1} \nonumber\\
& \xrightarrow{\sim}\mathcal{K}_{2,1}\boxtimes\mathcal{L}_{mp-1,1}\boxtimes\mathcal{L}_{np-1,1}\twoheadrightarrow\mathcal{K}_{2,1}\boxtimes P_j\twoheadrightarrow\mathcal{K}_{jp,1}.
\end{align*}
But $\mathcal{K}_{jp,1}\cong\mathcal{L}_{jp,1}$, and the diagrams in \cite[Section 2.3]{MS-cpq-Vir} show that $\mathcal{L}_{jp,1}$ is not a simple quotient of $\mathcal{K}_{mp-2,nq-1}$ (whether $p=2$ or $p>2$). Thus there is no surjection $\mathcal{L}_{mp-1,1}\boxtimes\mathcal{L}_{np-1,1}\twoheadrightarrow P_j$, and therefore $\mathcal{L}_{jp+1,1}$ is not a composition factor of $\mathcal{L}_{mp-1,1}\boxtimes\mathcal{L}_{np-1,1}$ for $m-n+2\leq j\leq m+n-2$, $j+m+n\equiv 0 \mod 2$. This completes the proof of the theorem.
\end{proof}

\section[An sl\_2-type tensor category]{An $\boldsymbol{\mathfrak{sl}_2}$-type tensor category}\label{sec:sl2-type-cat}

The $\mathfrak{sl}_2$-like fusion rules of Theorem~\ref{thm:Vir-fus-rules} suggest that we consider the additive full subcategory~${\mathcal{C}_{\mathfrak{sl}_2}\subseteq\mathcal{O}_{c_{p,q}}}$ such that
\begin{gather}\label{eqn:indecom-in-Csl2}
\lbrace\mathcal{K}_{1,1}'\rbrace\cup\lbrace\mathcal{L}_{np-1,1}\mid n\in\mathbb{Z}_{\geq 3}\rbrace
\end{gather}
is a complete list of representatives of the isomorphism classes of indecomposable objects in~$\mathcal{C}_{\mathfrak{sl}_2}$. That is, every object of $\mathcal{C}_{\mathfrak{sl}_2}$ is isomorphic to a finite direct sum of objects from \eqref{eqn:indecom-in-Csl2}. In~particular, $\mathcal{K}_{1,1}$ and $\mathcal{L}_{2p-1,1}$ are not objects of $\mathcal{C}_{\mathfrak{sl}_2}$. In this section, we will show that $\mathcal{C}_{\mathfrak{sl}_2}$ is a tensor subcategory of $\mathcal{O}_{c_{p,q}}$ with different unit object $\mathcal{K}_{1,1}'$, and that $\mathcal{C}_{\mathfrak{sl}_2}$ is rigid and tensor equivalent to some $3$-cocycle twist of the category $\operatorname{Rep} \mathfrak{sl}_2$ of finite-dimensional $\mathfrak{sl}_2$-modules.

In view of Theorem~\ref{thm:Vir-fus-rules}, to show that $\mathcal{C}_{\mathfrak{sl}_2}$ is closed under the tensor product $\boxtimes$ on $\mathcal{O}_{c_{p,q}}$, it only remains to show that if $\mathcal{W}$ is an object of $\mathcal{C}_{\mathfrak{sl}_2}$, then $\mathcal{K}_{1,1}'\boxtimes W$ is also an object of $\mathcal{C}_{\mathfrak{sl}_2}$. In~fact, since we want $\mathcal{K}_{1,1}'$ to be the unit object of $\mathcal{C}_{\mathfrak{sl}_2}$, we want $\mathcal{K}_{1,1}'\boxtimes W\cong W$. To prove this, we~need the following lemma (compare with \cite[Lemma~2.19]{MY-univ-sl2}).
\begin{Lemma}\label{lem:L11xW}
If $W$ is an object of $\mathcal{C}_{\mathfrak{sl}_2}$, then $\mathcal{L}_{1,1}\boxtimes W=0$.
\end{Lemma}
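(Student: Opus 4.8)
The plan is to reduce the vanishing of $\mathcal{L}_{1,1}\boxtimes W$ to the case $W=\mathcal{L}_{np-1,1}$ for $n\in\mathbb{Z}_{\geq 3}$ and the case $W=\mathcal{K}_{1,1}'$, since $\boxtimes$ is additive and every object of $\mathcal{C}_{\mathfrak{sl}_2}$ is a finite direct sum of these indecomposables. In fact the two cases are linked: since $\mathcal{L}_{1,1}$ is rigid (it is a simple rational Virasoro module, so tensoring with it is exact), applying $\mathcal{L}_{1,1}\boxtimes\bullet$ to the non-split sequence $0\to\mathcal{L}_{1,1}\to\mathcal{K}_{1,1}\to\mathcal{L}_{2p-1,1}\to 0$ and its contragredient $0\to\mathcal{L}_{2p-1,1}\to\mathcal{K}_{1,1}'\to\mathcal{L}_{1,1}\to 0$ shows that $\mathcal{L}_{1,1}\boxtimes\mathcal{K}_{1,1}'=0$ will follow once we know $\mathcal{L}_{1,1}\boxtimes\mathcal{L}_{1,1}=0$ and $\mathcal{L}_{1,1}\boxtimes\mathcal{L}_{2p-1,1}=0$. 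Note $\mathcal{L}_{2p-1,1}=\mathcal{L}_{3p-1,1}$ is not literally in our list, but it is the $n=2$ member of the same family, so the same computation will handle it.

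The main step is therefore to compute $\mathcal{L}_{1,1}\boxtimes\mathcal{L}_{np-1,1}$ for $n\geq 2$, and the natural tool is exactly the one used in the proof of Theorem~\ref{thm:Vir-fus-rules}: write $\mathcal{L}_{1,1}$ as a quotient $\mathcal{K}_{1,1}\twoheadrightarrow\mathcal{L}_{1,1}$ (equivalently $\mathcal{K}_{1,1}=\mathcal{K}_{1,1}\boxtimes\mathcal{K}_{1,1}$, the unit) and $\mathcal{L}_{np-1,1}=\mathcal{L}_{1,nq-1}$ as a quotient of $\mathcal{K}_{1,nq-1}$, and use right-exactness of $\boxtimes$ together with $\mathcal{K}_{r,1}\boxtimes\mathcal{K}_{1,s}\cong\mathcal{K}_{r,s}$ from \eqref{eqn:Kr1xK1s}. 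Concretely I would tensor the right-exact sequences
\begin{equation*}
\mathcal{K}_{1,nq+1}\longrightarrow\mathcal{K}_{1,nq-1}\longrightarrow\mathcal{L}_{np-1,1}\longrightarrow 0,\qquad
\mathcal{K}_{3p-1,1}\oplus\text{(lower terms)}\longrightarrow\mathcal{K}_{1,1}\longrightarrow\mathcal{L}_{1,1}\longrightarrow 0
\end{equation*}
to obtain a right-exact presentation of $\mathcal{L}_{1,1}\boxtimes\mathcal{L}_{np-1,1}$ as a quotient of $\mathcal{K}_{1,1}\boxtimes\mathcal{K}_{1,nq-1}\cong\mathcal{K}_{1,nq-1}$ modulo the images of the relation terms; then, reading off from the composition series of $\mathcal{K}_{1,nq-1}$ and the simple quotients of the relation modules $\mathcal{K}_{1,nq+1}$ and $\mathcal{K}_{p+1,nq-1}$ (via \cite[Section 2.3]{MS-cpq-Vir}), show that every simple subquotient of $\mathcal{K}_{1,nq-1}$ is covered, so the quotient is $0$. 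Alternatively, and perhaps more cleanly, one can observe that $\mathcal{L}_{1,1}$ is the unit of the rational minimal model category $\mathcal{O}_{c_{p,q}}^{\mathrm{rat}}$ generated by the $\mathcal{L}_{r,s}$ with $1\leq r\leq p-1$, $1\leq s\leq q-1$, and that $\mathcal{L}_{np-1,1}$ has no composition factors in that subcategory (its lowest weight $h_{np-1,1}$ with $np-1\geq 2p-1$ places it outside the Kac table), so a Verlinde-type argument or the fusion rules of \cite{Frenkel-MZhu} force $\mathcal{L}_{1,1}\boxtimes\mathcal{L}_{np-1,1}$ to be a module all of whose composition factors lie simultaneously in and outside the rational block, hence zero.

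The expected main obstacle is bookkeeping rather than conceptual: verifying that the simple quotients of the two relation modules appearing in the right-exact presentation really do account for \emph{all} composition factors of $\mathcal{K}_{1,nq-1}$, including the socle, so that the cokernel vanishes and not merely its head. This is the same kind of socle-series chase carried out in the proof of Theorem~\ref{thm:Vir-fus-rules} around \eqref{eqn:K-diag} and \eqref{eqn:comp-factors}, and the cleanest way to organize it is probably to tensor further with the rigid object $\mathcal{K}_{1,2}$ (using \eqref{eqn:K12xKrs} and exactness of $\mathcal{K}_{1,2}\boxtimes\bullet$) to detect any surviving simple submodule, exactly as in the $Q_i$ argument there; if $\mathcal{L}_{1,1}\boxtimes\mathcal{L}_{np-1,1}$ had a nonzero simple submodule $S$, then $\mathcal{K}_{1,2}\boxtimes S$ would be a nonzero submodule of $\mathcal{K}_{1,2}\boxtimes\mathcal{L}_{np-1,1}$ whose composition factors are forbidden by the known structure of $\mathcal{K}_{1,2}\boxtimes(\mathcal{L}_{1,1}\boxtimes\mathcal{L}_{np-1,1})=\mathcal{L}_{1,1}\boxtimes(\mathcal{K}_{1,2}\boxtimes\mathcal{L}_{np-1,1})$, giving a contradiction. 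Once $\mathcal{L}_{1,1}\boxtimes\mathcal{L}_{np-1,1}=0$ for all $n\geq 2$ is established, the reduction in the first paragraph finishes the proof.
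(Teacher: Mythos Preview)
Your proposal is much more complicated than necessary and contains several genuine errors; the paper's proof is two lines.

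The paper observes just two things: (i) by \cite[Lemma~5.11]{MS-cpq-Vir}, $\mathcal{L}_{1,1}\boxtimes W$ is always an $L_{c_{p,q}}$-module, and (ii) the surjection $\mathcal{K}_{1,1}\twoheadrightarrow\mathcal{L}_{1,1}$ together with right exactness makes $\mathcal{L}_{1,1}\boxtimes W$ a quotient of $W\cong\mathcal{K}_{1,1}\boxtimes W$. For $W\in\mathcal{C}_{\mathfrak{sl}_2}$ every simple quotient of $W$ is $\mathcal{L}_{np-1,1}$ with $n\geq 2$, none of which is an $L_{c_{p,q}}$-module, so the only quotient of $W$ that is an $L_{c_{p,q}}$-module is $0$. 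No reduction to indecomposables, no socle chase, no tensoring by $\mathcal{K}_{1,2}$ is needed. Your ``alternative'' rational-block argument gestures at exactly this, but the precise mechanism is the combination of (i) and (ii), not a Verlinde formula, and \cite{Frenkel-MZhu} concerns irrational central charge so is not relevant here.

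Your first paragraph has three real errors. First, $\mathcal{L}_{1,1}$ is \emph{not} rigid in $\mathcal{O}_{c_{p,q}}$: the unit is $\mathcal{K}_{1,1}$, whose socle is $\mathcal{L}_{2p-1,1}$, so there is no nonzero map $\mathcal{L}_{1,1}\to\mathcal{K}_{1,1}$ and hence no evaluation. Second, you have the exact sequences for $\mathcal{K}_{1,1}$ and $\mathcal{K}_{1,1}'$ reversed; compare \eqref{eqn:K11-K11'-exact-seq}. Third, and fatally for your reduction, you need $\mathcal{L}_{1,1}\boxtimes\mathcal{L}_{1,1}=0$, but this is false: since $\mathcal{L}_{1,1}\boxtimes\mathcal{L}_{2p-1,1}=0$ (by the paper's argument applied to $\mathcal{L}_{2p-1,1}$), right exactness on the correct sequence $0\to\mathcal{L}_{2p-1,1}\to\mathcal{K}_{1,1}\to\mathcal{L}_{1,1}\to 0$ gives $\mathcal{L}_{1,1}\boxtimes\mathcal{L}_{1,1}\cong\mathcal{K}_{1,1}\boxtimes\mathcal{L}_{1,1}\cong\mathcal{L}_{1,1}\neq 0$. (Also, $\mathcal{L}_{2p-1,1}\neq\mathcal{L}_{3p-1,1}$; these have different lowest conformal weights.)
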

\begin{proof}
The simple VOA quotient $L_{c_{p,q}}$ of $V_{c_{p,q}}$ is isomorphic to $\mathcal{L}_{1,1}$ as a $\mathcal{V}{\rm ir}$-module. Also, $L_{c_{p,q}}$ is a rational VOA, and every simple $L_{c_{p,q}}$-module is isomorphic to $\mathcal{L}_{r,s}$ for some $1\leq r\leq p-1$ and $1\leq s\leq q-1$ \cite{Wang}.
If $W$ is any object of $\mathcal{O}_{c_{p,q}}$, then $\mathcal{L}_{1,1}\boxtimes W$ is an $L_{c_{p,q}}$-module by \cite[Lemma 5.11]{MS-cpq-Vir}. On the other hand, the quotient map $\mathcal{K}_{1,1}\twoheadrightarrow\mathcal{L}_{1,1}$ induces a surjection
\begin{gather*}
W\xrightarrow{\sim}\mathcal{K}_{1,1}\boxtimes W\twoheadrightarrow\mathcal{L}_{1,1}\boxtimes W.
\end{gather*}
Thus $\mathcal{L}_{1,1}\boxtimes W$ is an $L_{c_{p,q}}$-module which is a quotient of $W$. Then if $W$ is an object of $\mathcal{C}_{\mathfrak{sl}_2}$ the only such quotient of $W$ is $0$, since any simple quotient of $W$ is isomorphic to $\mathcal{L}_{np-1,1}$ for some~${n\geq 2}$ (note from \eqref{eqn:Kac-1} that $\mathcal{L}_{2p-1,1}$ is the unique simple quotient of $\mathcal{K}_{1,1}'$).
\end{proof}

We now define left and right unit isomorphism candidates in $\mathcal{C}_{\mathfrak{sl}_2}$.
First, it follows from~\eqref{eqn:Kac-1}, the exactness of the contragredient functor, and the fact that simple $\mathcal{V}{\rm ir}$-modules are self-contragredient, that we have exact sequences
\begin{align}
& 0\longrightarrow\mathcal{L}_{2p-1,1}\longrightarrow\mathcal{K}_{1,1}\xrightarrow{ \pi }\mathcal{L}_{1,1}\longrightarrow 0,\nonumber\\
& 0\longrightarrow\mathcal{L}_{1,1}\xrightarrow{ \eta }\mathcal{K}_{1,1}'\longrightarrow\mathcal{L}_{2p-1,1}\longrightarrow 0. \label{eqn:K11-K11'-exact-seq}
\end{align}
We fix $\varphi=\eta\circ\pi\colon \mathcal{K}_{1,1}'\rightarrow\mathcal{K}_{1,1}$, and then for any object $W$ in $\mathcal{C}_{\mathfrak{sl}_2}$, we define homomorphisms
\begin{gather}
l_W'\colon \ \mathcal{K}_{1,1}'\boxtimes W\xrightarrow{\varphi\boxtimes\mathrm{Id}_W} \mathcal{K}_{1,1}\boxtimes W\xrightarrow{l_W} W,\nonumber\\
 r_W'\colon \ W\boxtimes\mathcal{K}_{1,1}' \xrightarrow{\mathrm{Id}_W\boxtimes\varphi} W\boxtimes\mathcal{K}_{1,1}\xrightarrow{r_W} W,\label{eqn:ln'-rn'-def}
\end{gather}
where $l_W$ and $r_W$ are the left and right unit isomorphisms in $\mathcal{O}_{c_{p,q}}$, respectively.

\begin{Proposition}\label{prop:K11'xW}
For any object $W$ in $\mathcal{C}_{\mathfrak{sl}_2}$, $l_W'$ and $r_W'$ are isomorphisms.
\end{Proposition}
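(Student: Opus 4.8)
The plan is to exploit the exact sequence $0\to\mathcal{L}_{1,1}\xrightarrow{\eta}\mathcal{K}_{1,1}'\to\mathcal{L}_{2p-1,1}\to 0$ from \eqref{eqn:K11-K11'-exact-seq} together with Lemma \ref{lem:L11xW}, and to reduce to the case of indecomposable $W$ by additivity of $\boxtimes$. So assume first that $W$ is one of $\mathcal{L}_{np-1,1}$ for $n\geq 3$ or $\mathcal{K}_{1,1}'$; in particular $W$ is indecomposable and, by Lemma \ref{lem:L11xW}, $\mathcal{L}_{1,1}\boxtimes W=0$. Tensoring the second sequence in \eqref{eqn:K11-K11'-exact-seq} with $W$ — the tensoring functor need not be exact since $\mathcal{K}_{1,1}'$ is not rigid, but right exactness always holds — gives a surjection $\mathcal{K}_{1,1}'\boxtimes W\twoheadrightarrow\mathcal{L}_{2p-1,1}\boxtimes W$; more usefully, tensoring the \emph{first} sequence in \eqref{eqn:K11-K11'-exact-seq}, whose middle term $\mathcal{K}_{1,1}$ is the unit object, gives a right exact sequence $\mathcal{L}_{2p-1,1}\boxtimes W\to W\xrightarrow{\sim}\mathcal{K}_{1,1}\boxtimes W\twoheadrightarrow\mathcal{L}_{1,1}\boxtimes W=0$, which just re-expresses the $\mathcal{L}_{1,1}$ computation. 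The real input is the rigidity and self-duality of $\mathcal{K}_{1,2}$ and $\mathcal{K}_{2,1}$, which make $\mathcal{K}_{1,2}\boxtimes\bullet$ and $\mathcal{K}_{2,1}\boxtimes\bullet$ exact.

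The key step is therefore to compute $\mathcal{L}_{2p-1,1}\boxtimes W$ for $W$ indecomposable in $\mathcal{C}_{\mathfrak{sl}_2}$ and to show that $\varphi\boxtimes\mathrm{Id}_W$ is an isomorphism. For $W=\mathcal{L}_{np-1,1}$ with $n\geq 3$ we may use the already-established fusion rules: Theorem \ref{thm:Vir-fus-rules} with $m=2$ is not directly available (it requires $m\geq 2$ and is a direct sum of simples, but $\mathcal{L}_{2p-1,1}\boxtimes\mathcal{L}_{np-1,1}$ with the convention $\mathcal{L}_{2p-1,1}=\mathcal{L}_{1,2q-1}$ should be recovered), so instead I would compute $\mathcal{L}_{2p-1,1}\boxtimes\mathcal{L}_{np-1,1}$ directly from the right exact sequence \eqref{eqn:right-exact} with $m=2$, identify it as $\mathcal{L}_{(n-2)p-1,1}\oplus\mathcal{L}_{np-1,1}$ (or just $\mathcal{L}_{np-1,1}$ when $n=3$, interpreting $\mathcal{L}_{p-1,1}=\mathcal{L}_{1,q-1}$ appropriately — one checks it is not in $\mathcal{C}_{\mathfrak{sl}_2}$, so some care with the $n=3$ endpoint is needed), and then deduce $\mathcal{L}_{2p-1,1}\boxtimes\mathcal{K}_{1,1}'$ by applying $\mathcal{L}_{2p-1,1}\boxtimes\bullet$ to the exact sequence for $\mathcal{K}_{1,1}'$ and using $\mathcal{L}_{2p-1,1}\boxtimes\mathcal{L}_{1,1}=0$. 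Once these are known, one argues that the composite $l_W'=l_W\circ(\varphi\boxtimes\mathrm{Id}_W)$ is surjective (since $\pi$ is surjective and $\boxtimes$, $l_W$ are right exact/isomorphisms, so $\varphi\boxtimes\mathrm{Id}_W$ is surjective precisely when $\eta\boxtimes\mathrm{Id}_W$ is) and that kernel and target have the same (finite) length, forcing an isomorphism; alternatively, since $\mathcal{K}_{1,1}'\boxtimes W$ and $W$ have the same image under the exact functor $\mathcal{K}_{1,2}\boxtimes\bullet$ (because $\mathcal{K}_{1,2}\boxtimes\varphi$ becomes an isomorphism after using \eqref{eqn:Kr1xK1s} and the known $\mathcal{K}_{1,2}$-fusion rules), faithfulness of $\mathcal{K}_{1,2}\boxtimes\bullet$ on lengths gives the claim.

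I expect the main obstacle to be the bookkeeping at the endpoint $n=3$ (and the analogous low cases), where the "expected" summand $\mathcal{L}_{(n-2)p-1,1}=\mathcal{L}_{p-1,1}$ falls outside $\mathcal{C}_{\mathfrak{sl}_2}$ and one must instead see that $\mathcal{L}_{2p-1,1}\boxtimes\mathcal{L}_{3p-1,1}$ is already concentrated on $\mathcal{L}_{3p-1,1}$ plus possibly lower, and more generally ensuring that $\varphi\boxtimes\mathrm{Id}_W$ really is an isomorphism and not merely a surjection with small kernel. The cleanest route is: (i) prove surjectivity of $\varphi\boxtimes\mathrm{Id}_W$ via right exactness and Lemma \ref{lem:L11xW}; (ii) prove injectivity by applying the exact faithful functor $\mathcal{K}_{1,2}\boxtimes\bullet$ and checking $\mathcal{K}_{1,2}\boxtimes(\varphi\boxtimes\mathrm{Id}_W)$ is injective using the explicit $\mathcal{K}_{1,2}$-fusion formulas from \cite{MS-cpq-Vir}; (iii) conclude $l_W'$, and symmetrically $r_W'$, are isomorphisms, then extend to all $W$ by additivity. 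Commutativity and naturality of the unit constraints in $\mathcal{O}_{c_{p,q}}$ give that $r_W'$ is handled identically.
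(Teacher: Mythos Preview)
Your overall architecture is the same as the paper's: reduce $l_W'$ to showing $\varphi\boxtimes\mathrm{Id}_W$ is an isomorphism, factor $\varphi=\eta\circ\pi$, use right exactness together with Lemma~\ref{lem:L11xW} to get that $\pi\boxtimes\mathrm{Id}_W$ is an isomorphism and $\eta\boxtimes\mathrm{Id}_W$ is surjective, and then establish $\mathcal{L}_{2p-1,1}\boxtimes W\cong W$ to upgrade surjectivity to an isomorphism via a length (or weight-space dimension) count. That is exactly what the paper does, and the reduction to $r_W'$ via the braiding is also the same.

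The genuine problem is your handling of $\mathcal{L}_{2p-1,1}\boxtimes W$. You assert that Theorem~\ref{thm:Vir-fus-rules} ``with $m=2$ is not directly available,'' but it is: the theorem is stated for all $m,n\in\mathbb{Z}_{\geq 2}$, and plugging in $m=2$, $n\geq 3$ gives the index range $i=|2-n|+2=n$ up to $i=2+n-2=n$, i.e.\ a single summand, so
\[
\mathcal{L}_{2p-1,1}\boxtimes\mathcal{L}_{np-1,1}\cong\mathcal{L}_{np-1,1}.
\]
Your expected answer $\mathcal{L}_{(n-2)p-1,1}\oplus\mathcal{L}_{np-1,1}$ is simply wrong (you seem to be pattern-matching the $\mathfrak{sl}_2$ rule $V_1\otimes V_{n-1}$, but $\mathcal{L}_{2p-1,1}$ is \emph{not} $\mathcal{L}_1$ in the relabeling \eqref{eqn:Ln-notation}; it is a composition factor of $\mathcal{L}_0=\mathcal{K}_{1,1}'$). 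This error is what produces your imagined ``endpoint $n=3$'' difficulty with a spurious $\mathcal{L}_{p-1,1}$; there is no such issue. Likewise, the $m=n=2$ case of Theorem~\ref{thm:Vir-fus-rules} gives $\mathcal{L}_{2p-1,1}\boxtimes\mathcal{L}_{2p-1,1}\cong\mathcal{K}_{1,1}'$, and then tensoring the second sequence in \eqref{eqn:K11-K11'-exact-seq} by $\mathcal{L}_{2p-1,1}$ yields $\mathcal{L}_{2p-1,1}\boxtimes\mathcal{K}_{1,1}'\cong\mathcal{K}_{1,1}'$. Once you correct this, your length-comparison argument (source and target, not ``kernel and target'') goes through and matches the paper's proof; the detour through exactness of $\mathcal{K}_{1,2}\boxtimes\bullet$ is unnecessary.
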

\begin{proof}
By elementary properties of braided tensor categories, $r_W=l_W\circ\mathcal{R}_{W,\mathcal{K}_{1,1}}$, where $\mathcal{R}$ is the natural braiding isomorphism in \smash{$\mathcal{O}_{c_{p,q}}$}. Thus naturality implies \smash{$r_W'=l_W'\circ\mathcal{R}_{W,\mathcal{K}_{1,1}'}$}, and hence $l_W'$ is an isomorphism if and only if $r_W'$ is.

To show $l_W'$ is an isomorphism, it is enough to show that $\varphi\boxtimes\mathrm{Id}_W$ is an isomorphism, and for this it is enough to show that
\begin{gather*}
\begin{split}
& \pi\boxtimes\mathrm{Id}_W\colon \ \mathcal{K}_{1,1}'\boxtimes W\longrightarrow\mathcal{L}_{2p-1,1}\boxtimes W,\\
& \eta\boxtimes\mathrm{Id}_W\colon \ \mathcal{L}_{2p-1,1}\boxtimes W\longrightarrow\mathcal{K}_{1,1}\boxtimes W
\end{split}
\end{gather*}
are both isomorphisms. By \eqref{eqn:K11-K11'-exact-seq} and right exactness of $\boxtimes$, we have right exact sequences
\begin{align*}
&\mathcal{L}_{1,1}\boxtimes W\longrightarrow\mathcal{K}_{1,1}'\boxtimes W\xrightarrow{\pi\boxtimes\mathrm{Id}_W} \mathcal{L}_{2p-1,1}\boxtimes W\longrightarrow 0,\\
& \mathcal{L}_{2p-1,1}\boxtimes W\xrightarrow{\eta\boxtimes\mathrm{Id}_W} \mathcal{K}_{1,1}\boxtimes W\longrightarrow\mathcal{L}_{1,1}\boxtimes W\longrightarrow 0.
\end{align*}
It then follows from Lemma \ref{lem:L11xW} that $\pi\boxtimes\mathrm{Id}_W$ is an isomorphism and $\eta\boxtimes\mathrm{Id}_W$ is surjective.

To show that $\eta\boxtimes\mathrm{Id}_W$ is also injective and thus an isomorphism, it suffices to show that $\mathcal{L}_{2p-1,1}\boxtimes W\cong W$, since then the finite-dimensional weight spaces of $\mathcal{L}_{2p-1,1}\boxtimes W$ and $\mathcal{K}_{1,1}\boxtimes W\cong W$ will have the same dimension. To prove $\mathcal{L}_{2p-1,1}\boxtimes W\cong W$, we may assume $W$ is indecomposable, and the case $W=\mathcal{L}_{np-1,1}$ for $n\geq 3$ is covered by Theorem~\ref{thm:Vir-fus-rules}. For the case~${W=\mathcal{K}_{1,1}'}$, \eqref{eqn:K11-K11'-exact-seq} and right exactness of $\boxtimes$ yield a right exact sequence
\begin{gather*}
\mathcal{L}_{2p-1,1}\boxtimes\mathcal{L}_{1,1}\longrightarrow\mathcal{L}_{2p-1,1}\boxtimes\mathcal{K}_{1,1}'\longrightarrow\mathcal{L}_{2p-1,1}\boxtimes\mathcal{L}_{2p-1,1}\longrightarrow 0.
\end{gather*}
Since $\mathcal{L}_{2p-1,1}\boxtimes\mathcal{L}_{1,1}=0$ by the same argument as in the proof of Lemma \ref{lem:L11xW}, and since $\mathcal{L}_{2p-1,1}\boxtimes\mathcal{L}_{2p-1,1}\cong\mathcal{K}_{1,1}'$ by Theorem~\ref{thm:Vir-fus-rules}, we get $\mathcal{L}_{2p-1,1}\boxtimes\mathcal{K}_{1,1}'\cong\mathcal{K}_{1,1}'$, as required.
\end{proof}

Theorem~\ref{thm:Vir-fus-rules} and Proposition \ref{prop:K11'xW} now show in particular that $\mathcal{C}_{\mathfrak{sl}_2}$ is closed under the tensor product on $\mathcal{O}_{c_{p,q}}$.
To write the tensor products of indecomposable objects in $\mathcal{C}_{\mathfrak{sl}_2}$ in a uniform way, we introduce the notation
\begin{gather}\label{eqn:Ln-notation}
\mathcal{L}_n =\begin{cases}
\mathcal{K}_{1,1}' & \text{if } n=0,\\
\mathcal{L}_{(n+2)p-1,1} & \text{if } n\geq 1.\\
\end{cases}
\end{gather}
for $n\in\mathbb{Z}_{\geq 0}$. Then Theorem~\ref{thm:Vir-fus-rules} and Proposition \ref{prop:K11'xW} imply
\begin{gather}\label{eqn:unif-fus-rules}
\mathcal{L}_m\boxtimes\mathcal{L}_n\cong\bigoplus_{\substack{k=\vert m-n\vert\\
k+m+n\equiv 0 \mod 2}}^{m+n} \mathcal{L}_k
\end{gather}
for all $m,n\in\mathbb{Z}_{\geq 0}$. These are precisely the fusion rules of finite-dimensional simple $\mathfrak{sl}_2$-modules, if we identify $\mathcal{L}_n$ with the $(n+1)$-dimensional simple $\mathfrak{sl}_2$-module of highest weight $n$.

Proposition \ref{prop:K11'xW} also shows that there are natural isomorphisms $l'\colon\mathcal{K}_{1,1}'\boxtimes\bullet\longrightarrow\mathrm{Id}_{\mathcal{C}_{\mathfrak{sl}_2}}$, $r'\colon\bullet\boxtimes\mathcal{K}_{1,1}'\longrightarrow\mathrm{Id}_{\mathcal{C}_{\mathfrak{sl}_2}}$. Further, the associativity isomorphisms $\mathcal{A}$ and braiding isomorphisms $\mathcal{R}$ on~$\mathcal{O}_{c_{p,q}}$ restrict to natural isomorphisms on $\mathcal{C}_{\mathfrak{sl}_2}$ that satisfy the pentagon and hexagon axioms of a braided monoidal category.

\begin{Theorem}\label{thm:Csl2-tensor}
$(\mathcal{C}_{\mathfrak{sl}_2},\boxtimes,\mathcal{K}_{1,1}',\mathcal{A},l',r',\mathcal{R})$ is a semisimple braided tensor category.
\end{Theorem}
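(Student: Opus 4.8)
The plan is to first establish that $\mathcal{C}_{\mathfrak{sl}_2}$ is a semisimple abelian $\mathbb{C}$-linear category in its own right, and then to check that the braided monoidal structure carried over from $\mathcal{O}_{c_{p,q}}$, but with the modified unit data $(\mathcal{K}_{1,1}',l',r')$, satisfies all the axioms of a braided monoidal category. The one point requiring care is that $\mathcal{C}_{\mathfrak{sl}_2}$ is \emph{not} closed under subobjects or quotients inside $\mathcal{O}_{c_{p,q}}$, since $\mathcal{K}_{1,1}'$ is reducible there; so semisimplicity cannot simply be inherited, and likewise the triangle identity must be re-verified because the unit object has changed. Neither of these is deep, and everything else is already recorded in the preceding discussion.

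For semisimplicity I would compute the $\mathrm{Hom}$ spaces between the indecomposable objects in \eqref{eqn:indecom-in-Csl2}. Since $\mathcal{K}_{1,1}'$ is a highest-weight $\mathcal{V}ir$-module with one-dimensional lowest ($L_0=0$) weight space, it is generated by that vector, so $\mathrm{End}_{\mathcal{V}ir}(\mathcal{K}_{1,1}')=\mathbb{C}$; also $\mathrm{End}_{\mathcal{V}ir}(\mathcal{L}_{np-1,1})=\mathbb{C}$ by Schur's lemma for $n\geq 3$. By \eqref{eqn:K11-K11'-exact-seq} the only composition factors of $\mathcal{K}_{1,1}'$ are $\mathcal{L}_{1,1}$ and $\mathcal{L}_{2p-1,1}$, neither of which is isomorphic to $\mathcal{L}_{np-1,1}$ for $n\geq 3$; hence any morphism between two distinct objects from \eqref{eqn:indecom-in-Csl2} vanishes, because a nonzero morphism would force a common composition factor between a quotient of the source and a subobject of the target. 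Thus the objects of \eqref{eqn:indecom-in-Csl2} are pairwise non-isomorphic with endomorphism ring $\mathbb{C}$ and no morphisms between distinct ones, so $\mathcal{C}_{\mathfrak{sl}_2}$ is $\mathbb{C}$-linearly equivalent to the category of finitely supported families of finite-dimensional vector spaces indexed by \eqref{eqn:indecom-in-Csl2}. In particular $\mathcal{C}_{\mathfrak{sl}_2}$ is abelian and semisimple, with simple objects exactly the modules listed in \eqref{eqn:indecom-in-Csl2}.

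Next I would assemble the monoidal structure. Closure of $\mathcal{C}_{\mathfrak{sl}_2}$ under $\boxtimes$ has already been established: Theorem \ref{thm:Vir-fus-rules} shows that $\mathcal{L}_{mp-1,1}\boxtimes\mathcal{L}_{np-1,1}$ is a finite direct sum of $\mathcal{K}_{1,1}'$ and modules $\mathcal{L}_{ip-1,1}$ with $i\geq 2$, while Proposition \ref{prop:K11'xW} gives $\mathcal{K}_{1,1}'\boxtimes W\cong W\cong W\boxtimes\mathcal{K}_{1,1}'$ for $W$ in $\mathcal{C}_{\mathfrak{sl}_2}$; by biadditivity of $\boxtimes$, $\mathcal{C}_{\mathfrak{sl}_2}$ is closed under $\boxtimes$, which therefore restricts to a $\mathbb{C}$-bilinear bifunctor (automatically exact in each variable, since $\mathcal{C}_{\mathfrak{sl}_2}$ is semisimple). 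Being a full subcategory closed under $\boxtimes$, $\mathcal{C}_{\mathfrak{sl}_2}$ inherits the associativity and braiding isomorphisms $\mathcal{A}$ and $\mathcal{R}$, and the pentagon and hexagon axioms continue to hold because they do in $\mathcal{O}_{c_{p,q}}$ and involve no unit object. Finally, Proposition \ref{prop:K11'xW} together with naturality of the unit isomorphisms $l_W,r_W$ of $\mathcal{O}_{c_{p,q}}$ and functoriality of $\varphi\boxtimes\bullet$ and $\bullet\boxtimes\varphi$ shows that $l'$ and $r'$ are natural isomorphisms.

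The only remaining monoidal axiom not literally inherited from $\mathcal{O}_{c_{p,q}}$ is the triangle identity, because the unit object and unit isomorphisms have been changed from $(\mathcal{K}_{1,1},l,r)$ to $(\mathcal{K}_{1,1}',l',r')$; this short check is the ``main'' (if mild) obstacle. Using $l'_X=l_X\circ(\varphi\boxtimes\mathrm{Id}_X)$ and $r'_W=r_W\circ(\mathrm{Id}_W\boxtimes\varphi)$ from \eqref{eqn:ln'-rn'-def}, one computes for objects $W,X$ of $\mathcal{C}_{\mathfrak{sl}_2}$:
\begin{align*}
(\mathrm{Id}_W\boxtimes l'_X)\circ\mathcal{A}_{W,\mathcal{K}_{1,1}',X}
&=(\mathrm{Id}_W\boxtimes l_X)\circ(\mathrm{Id}_W\boxtimes(\varphi\boxtimes\mathrm{Id}_X))\circ\mathcal{A}_{W,\mathcal{K}_{1,1}',X}\\
&=(\mathrm{Id}_W\boxtimes l_X)\circ\mathcal{A}_{W,\mathcal{K}_{1,1},X}\circ((\mathrm{Id}_W\boxtimes\varphi)\boxtimes\mathrm{Id}_X)\\
&=(r_W\boxtimes\mathrm{Id}_X)\circ((\mathrm{Id}_W\boxtimes\varphi)\boxtimes\mathrm{Id}_X)=r'_W\boxtimes\mathrm{Id}_X,
\end{align*}
where the second equality is naturality of $\mathcal{A}$ and the third is the triangle identity in $\mathcal{O}_{c_{p,q}}$. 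Hence the triangle identity holds in $\mathcal{C}_{\mathfrak{sl}_2}$, and the remaining coherence compatibilities (e.g.\ $l'_{\mathcal{K}_{1,1}'}=r'_{\mathcal{K}_{1,1}'}$, and the compatibilities of $\mathcal{R}$ with $l',r'$) follow formally from the pentagon, hexagon and triangle axioms. Combined with the semisimplicity established above, this shows $(\mathcal{C}_{\mathfrak{sl}_2},\boxtimes,\mathcal{K}_{1,1}',\mathcal{A},l',r',\mathcal{R})$ is a semisimple braided tensor category; rigidity and the symmetric structure will be taken up separately.
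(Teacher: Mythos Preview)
Your proof is correct and follows essentially the same approach as the paper: both verify the triangle axiom by the same naturality-of-$\mathcal{A}$ computation reducing to the triangle axiom in $\mathcal{O}_{c_{p,q}}$, and both note that the pentagon and hexagon axioms are inherited. Your semisimplicity argument via explicit $\mathrm{Hom}$ computations is slightly more detailed than the paper's (which simply observes that the only proper nonzero subobject of $\mathcal{K}_{1,1}'$ in $\mathcal{O}_{c_{p,q}}$ is $\mathcal{L}_{1,1}\notin\mathcal{C}_{\mathfrak{sl}_2}$), but the substance is the same; one small caveat is that your triangle computation implicitly uses the convention $\mathcal{A}:(W\boxtimes\mathcal{K}_{1,1}')\boxtimes X\to W\boxtimes(\mathcal{K}_{1,1}'\boxtimes X)$, opposite to the paper's, though of course the two forms of the axiom are equivalent.
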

\begin{proof}
To show that $\mathcal{C}_{\mathfrak{sl}_2}$ is semisimple, it is enough to show that $\mathcal{K}_{1,1}'$ is simple as an object of~$\mathcal{C}_{\mathfrak{sl}_2}$ (though it is not simple in $\mathcal{O}_{c_{p,q}}$). Indeed, from \eqref{eqn:K11-K11'-exact-seq}, the only non-trivial subobject of~$\mathcal{K}_{1,1}'$ is $\mathcal{L}_{1,1}$, which is not an object of $\mathcal{C}_{\mathfrak{sl}_2}$. Thus $\mathcal{C}_{\mathfrak{sl}_2}$ is a semisimple abelian category.

To show that $\mathcal{C}_{\mathfrak{sl}_2}$ is a braided tensor category, it remains to prove the triangle axiom for~$l'$,~$r'$, and $\mathcal{A}$. Indeed, since the associativity isomorphisms are natural and the triangle axiom holds in $\mathcal{O}_{c_{p,q}}$, we have
\begin{align*}
(r'_{W_1}\boxtimes\mathrm{Id}_{W_2})\circ\mathcal{A}_{W_1,\mathcal{K}_{1,1}',W_2} & = (r_{W_1}\boxtimes\mathrm{Id}_{W_2})\circ((\mathrm{Id}_{W_1}\boxtimes\varphi)\boxtimes\mathrm{Id}_{W_2})\circ\mathcal{A}_{W_1,\mathcal{K}_{1,1}',W_2}\nonumber\\
& = (r_{W_1}\boxtimes\mathrm{Id}_{W_2})\circ\mathcal{A}_{W_1,\mathcal{K}_{1,1},W_2}\circ(\mathrm{Id}_{W_1}\boxtimes(\varphi\boxtimes\mathrm{Id}_{W_2}))\nonumber\\
& =(\mathrm{Id}_{W_1}\boxtimes l_{W_2})\circ(\mathrm{Id}_{W_1}\boxtimes(\varphi\boxtimes\mathrm{Id}_{W_2})) = \mathrm{Id}_{W_1}\boxtimes l_{W_2}'
\end{align*}
for any objects $W_1$, $W_2$ in $\mathcal{C}_{\mathfrak{sl}_2}$, as required.
\end{proof}

We next show that the tensor category $\mathcal{C}_{\mathfrak{sl}_2}$ is rigid. Actually, this follows rather easily using Theorem 1.1 of the recent paper \cite{EP}, which appeared after we had already begun this work. Here we present a more explicit and self-contained proof, starting by showing that $\mathcal{L}_1$ is rigid and self-dual. The key point is to determine the associativity isomorphism $\mathcal{A}_{\mathcal{L}_1,\mathcal{L}_1,\mathcal{L}_1}\colon \mathcal{L}_1\boxtimes(\mathcal{L}_1\boxtimes\mathcal{L}_1)\rightarrow(\mathcal{L}_1\boxtimes\mathcal{L}_1)\boxtimes\mathcal{L}_1$ explicitly enough. Since $\mathcal{C}_{\mathfrak{sl}_2}$ is semisimple, this associativity isomorphism is determined by the $F$-matrix, or $6j$-symbols, and it turns out that proving rigidity amounts to showing that a certain $F$-matrix entry is non-zero. We will prove this by using the hexagon axiom of a braided tensor category to constrain the $F$-matrix. To prepare, we fix some notation.

Recalling the notation \eqref{eqn:Ln-notation} and the fusion rules \eqref{eqn:unif-fus-rules}, let
\begin{gather*}
\pi^k_{mn}\colon\ \mathcal{L}_m\boxtimes\mathcal{L}_n\longrightarrow\mathcal{L}_k,\qquad i_{mn}^k\colon\ \mathcal{L}_k\longrightarrow\mathcal{L}_m\boxtimes\mathcal{L}_n
\end{gather*}
for $m,n\in\mathbb{Z}_{\geq 0}$, $\vert m-n\vert\leq k\leq m+n$, $k+m+n\equiv 0 \mod 2$ denote a system of surjections and injections such that
\begin{gather}\label{eqn:proj-system}
\pi^k_{mn}\circ i^{k'}_{mn} =\delta_{k,k'}\mathrm{Id}_{\mathcal{L}_k},\qquad \sum_{\substack{k=\vert m-n\vert\\ k+m+n\equiv 0 \mod 2}}^{m+n} i_{mn}^k\circ\pi_{mn}^k =\mathrm{Id}_{\mathcal{L}_m\boxtimes\mathcal{L}_n}.
\end{gather}
For simplicity, we may assume that if $m<n$, then $\pi^k_{mn}\circ\mathcal{R}_{\mathcal{L}_m,\mathcal{L}_n}=\pi^k_{nm}$. We may also assume that $\pi_{0n}^n = l'_{\mathcal{L}_n}$ and $\pi_{n0}^n=r'_{\mathcal{L}_n}$ for all $n\in\mathbb{Z}_{\geq 0}$. For the case $m=n$, we have
\begin{gather*}
\pi_{nn}^k\circ\mathcal{R}_{\mathcal{L}_n,\mathcal{L}_n} = R_n^k\cdot\pi_{nn}^k
\end{gather*}
for some $R_n^k\in\mathbb{C}^\times$. In fact, using the methods of \cite[Sections 7 and 8]{GaNe} and \cite[Section 6]{MY-25-Vir} (see in particular the calculations preceding \cite[Theorem 6.3]{MY-25-Vir}), we get
\begin{gather*}
R_n^k ={\rm e}^{\pi {\rm i}(h_{(k+2)p-1,1}-2h_{(n+2)p-1,1})}.
\end{gather*}
We will need the case of $n=1$, in particular,
\begin{gather}\label{eqn:r-values}
R_1^0 = {\rm e}^{\pi {\rm i} pq/2},\qquad R_1^2 = - {\rm e}^{\pi {\rm i} pq/2}.
\end{gather}

Next we consider the triple tensor product $\mathcal{L}_1\boxtimes(\mathcal{L}_1\boxtimes\mathcal{L}_1)$. For $k=0,2$, we define morphisms
\begin{gather*}
 \Pi_k\colon\ \mathcal{L}_1\boxtimes(\mathcal{L}_1\boxtimes\mathcal{L}_1)\xrightarrow{\mathrm{Id}_{\mathcal{L}_1}\boxtimes\pi_{11}^k} \mathcal{L}_1\boxtimes\mathcal{L}_k\xrightarrow{\pi_{1k}^1} \mathcal{L}_1,\\
 \widetilde{\Pi}_k\colon\ (\mathcal{L}_1\boxtimes\mathcal{L}_1)\boxtimes\mathcal{L}_1\xrightarrow{\pi_{11}^k\boxtimes\mathrm{Id}_{\mathcal{L}_1}} \mathcal{L}_1\boxtimes\mathcal{L}_k\xrightarrow{\pi_{k1}^1} \mathcal{L}_1.
\end{gather*}
It is easy to see that $\lbrace\Pi_0,\Pi_2\rbrace$, respectively $\lbrace\widetilde{\Pi}_0,\widetilde{\Pi}_2\rbrace$ is a basis of $\operatorname{Hom}(\mathcal{L}_1\boxtimes(\mathcal{L}_1\boxtimes\mathcal{L}_1),\mathcal{L}_1)$, respectively $\operatorname{Hom}((\mathcal{L}_1\boxtimes\mathcal{L}_1)\boxtimes\mathcal{L}_1,\mathcal{L}_1)$. Thus we can define the $F$-matrix, or $6j$-symbols, by
\begin{gather*}
\widetilde{\Pi}_k\circ\mathcal{A}_{\mathcal{L}_1,\mathcal{L}_1,\mathcal{L}_1} =\sum_{l=0,2} F_{kl}\Pi_l.
\end{gather*}
Note that the $2\times 2$ matrix \smash{$F=\bigl[\begin{smallmatrix}
F_{00} & F_{02}\\
F_{20} & F_{22}\\
\end{smallmatrix}\bigr]$} is invertible.

\begin{Theorem}\label{thm:Csl2-rigid}
The tensor category $\mathcal{C}_{\mathfrak{sl}_2}$ is rigid.
\end{Theorem}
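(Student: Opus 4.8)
The plan is to prove that $\mathcal{C}_{\mathfrak{sl}_2}$ is rigid by first establishing that the generating object $\mathcal{L}_1$ is rigid and self-dual, and then bootstrapping to all objects using the fusion rules \eqref{eqn:unif-fus-rules}. Since every indecomposable object $\mathcal{L}_n$ for $n\geq 2$ appears as a direct summand of an iterated tensor power $\mathcal{L}_1^{\boxtimes n}$ (which follows inductively from \eqref{eqn:unif-fus-rules}, as $\mathcal{L}_n$ is a summand of $\mathcal{L}_1\boxtimes\mathcal{L}_{n-1}$), and since in a semisimple tensor category a direct summand of a rigid object is rigid, it suffices to prove that $\mathcal{L}_1$ is rigid. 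We take as evaluation and coevaluation candidates the maps $\mathrm{ev}:\mathcal{L}_1\boxtimes\mathcal{L}_1\xrightarrow{\pi_{11}^0}\mathcal{K}_{1,1}'\xrightarrow{r'} \,\bullet$ (using $\mathcal{K}_{1,1}'$ as the unit) and $\mathrm{coev}:\mathcal{K}_{1,1}'\xrightarrow{(r')^{-1}}\,\bullet\xrightarrow{i_{11}^0}\mathcal{L}_1\boxtimes\mathcal{L}_1$, each defined up to a scalar. Proving rigidity of $\mathcal{L}_1$ amounts to verifying the two zig-zag (snake) identities, and by naturality and semisimplicity each zig-zag composite is a scalar multiple of $\mathrm{Id}_{\mathcal{L}_1}$; the content is that this scalar is non-zero. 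Unwinding the definitions, the zig-zag composite expressed via the projector system \eqref{eqn:proj-system} is controlled precisely by the $F$-matrix entry $F_{00}$ relating the two associativity bases, so rigidity of $\mathcal{L}_1$ is equivalent to $F_{00}\neq 0$.

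The heart of the argument is therefore to show $F_{00}\neq 0$, and the key idea is to extract constraints on the $2\times 2$ matrix $F$ from the hexagon axiom for the braiding $\mathcal{R}$ on $\mathcal{C}_{\mathfrak{sl}_2}$. First I would write out both hexagon identities for the triple $(\mathcal{L}_1,\mathcal{L}_1,\mathcal{L}_1)$, composing with the surjections $\Pi_k$ and $\widetilde{\Pi}_k$ and using the eigenvalue relations $\pi_{nn}^k\circ\mathcal{R}_{\mathcal{L}_n,\mathcal{L}_n}=R_n^k\cdot\pi_{nn}^k$ together with the assumed compatibility $\pi^k_{mn}\circ\mathcal{R}_{\mathcal{L}_m,\mathcal{L}_n}=\pi^k_{nm}$ for $m\neq n$ and the normalizations $\pi_{0n}^n=l'_{\mathcal{L}_n}$, $\pi_{n0}^n=r'_{\mathcal{L}_n}$. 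Each hexagon then becomes a matrix equation of the form $F\cdot D_1\cdot F = D_2\cdot F\cdot D_3$ (or its inverse-braiding analogue), where $D_1,D_2,D_3$ are diagonal matrices whose entries are ratios of the eigenvalues $R_1^0,R_1^2$ from \eqref{eqn:r-values} and analogous braiding eigenvalues from the mixed tensor products $\mathcal{L}_1\boxtimes\mathcal{L}_2$; all of these are explicit roots of unity computable from the conformal-weight formula for $h_{r,s}$. Combining the two hexagon relations and using invertibility of $F$, one derives polynomial relations among the entries $F_{00},F_{02},F_{20},F_{22}$ that, together with invertibility ($\det F\neq 0$), force $F_{00}\neq 0$ — the point being that if $F_{00}$ vanished then $F$ would be anti-diagonal, and substituting an anti-diagonal $F$ into the hexagon relations yields a contradiction with the specific values \eqref{eqn:r-values} (concretely, $R_1^0/R_1^2=-1$ is incompatible with the constraint an anti-diagonal solution would impose).

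The main obstacle I anticipate is bookkeeping: correctly tracking the unit isomorphisms $l',r'$ (which, recall, are built from $\varphi=\eta\circ\pi$ and differ from the ambient unit isomorphisms of $\mathcal{O}_{c_{p,q}}$) through the hexagon diagrams, and correctly identifying the diagonal matrices $D_1,D_2,D_3$ in terms of the braiding eigenvalues on $\mathcal{L}_1\boxtimes\mathcal{L}_2$ and on $\mathcal{L}_1\boxtimes\mathcal{K}_{1,1}'$. Computing those auxiliary eigenvalues requires a small amount of the same machinery (à la \cite[Sections 7, 8]{GaNe}, \cite[Section 6]{MY-25-Vir}) used to obtain \eqref{eqn:r-values}, but since we only need which hexagon solutions are ruled out, it should suffice to pin down the relevant eigenvalue ratios up to the ambiguity that does not affect the conclusion. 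Once $F_{00}\neq 0$ is established, rigidity of $\mathcal{L}_1$ follows immediately, self-duality $\mathcal{L}_1^*\cong\mathcal{L}_1$ is clear since $\mathcal{L}_1$ is the only indecomposable whose tensor square contains the unit, and rigidity of the whole category follows by the summand argument above; alternatively, as remarked, one may invoke \cite[Theorem 1.1]{EP} with the $\mathrm{Rep}\,\mathfrak{sl}_2$ fusion rules, but the hexagon argument is self-contained.
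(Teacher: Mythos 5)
Your proposal follows essentially the same route as the paper: reduce rigidity to $\mathcal{L}_1$ being rigid and self-dual, identify the relevant zig-zag scalar with the $F$-matrix entry $F_{00}$, constrain $F$ via the hexagon axiom using the braiding eigenvalues $R_1^0, R_1^2$, and then bootstrap to all $\mathcal{L}_n$ by the direct-summand argument. One small correction: your heuristic that ``$F_{00}=0$ would force $F$ anti-diagonal'' is not what the hexagon constraint gives; the upper-left entry of the matrix equation yields $(-1)^{pq}F_{00}=F_{00}^2+F_{02}F_{20}$, so $F_{00}=0$ forces $F_{02}F_{20}=0$ (i.e.\ at least one off-diagonal entry vanishes), whence $\det F=F_{00}F_{22}-F_{02}F_{20}=0$ contradicts invertibility — equivalently, the paper observes directly that $\det F=F_{00}\bigl(F_{22}-(-1)^{pq}+F_{00}\bigr)$, so $F_{00}$ divides $\det F\neq 0$.
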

\begin{proof}
Since $\mathcal{C}_{\mathfrak{sl}_2}$ is braided and semisimple, it is enough to show that each simple object $W$ has a left dual $W^*$. We will first show that $\mathcal{L}_1=\mathcal{L}_{3p-1,1}$ is rigid and self-dual using the evaluation and coevaluation candidates
\begin{gather*}
\pi^0_{11}\colon\ \mathcal{L}_1\boxtimes\mathcal{L}_1\longrightarrow\mathcal{L}_0=\mathcal{K}_{1,1}',\qquad i^0_{11}\colon\ \mathcal{L}_0\longrightarrow\mathcal{L}_1\boxtimes\mathcal{L}_1.
\end{gather*}
Similar to \cite[Lemma 4.2.1]{CMY3} and \cite[Lemma 2.1]{EP}, it is enough to show that the composition
\begin{gather*}
\mathcal{L}_1\xrightarrow{\sim} \mathcal{L}_1\boxtimes\mathcal{L}_0\xrightarrow{\mathrm{Id}_{\mathcal{L}_1}\boxtimes i^0_{11}} \mathcal{L}_1\boxtimes(\mathcal{L}_1\boxtimes\mathcal{L}_1)\xrightarrow{\sim}(\mathcal{L}_1\boxtimes\mathcal{L}_1)\boxtimes\mathcal{L}_1\xrightarrow{\pi^0_{11}\boxtimes\mathrm{Id}_{\mathcal{L}_1}}\mathcal{L}_0\boxtimes\mathcal{L}_1\xrightarrow{\sim}\mathcal{L}_1
\end{gather*}
is a non-zero scalar multiple of $\mathrm{Id}_{\mathcal{L}_1}$. From the definitions and \eqref{eqn:proj-system}, this composition equals
\begin{gather*}
\widetilde{\Pi}_0\circ\mathcal{A}_{\mathcal{L}_1,\mathcal{L}_1,\mathcal{L}_1} \circ\bigl(\mathrm{Id}_{\mathcal{L}_1}\boxtimes i^0_{11}\bigr)\circ (r_{\mathcal{L}_1}')^{-1} =\sum_{k=0,2} F_{0k}\cdot\Pi_k\circ\bigl(\mathrm{Id}_{\mathcal{L}_1}\boxtimes i^0_{11}\bigr)\circ i^1_{10}\\
\qquad =\sum_{k=0,2} F_{0k}\cdot\pi^1_{1k}\circ\bigl(\mathrm{Id}\boxtimes\pi^k_{11}\bigr)\circ\bigl(\mathrm{Id}_{\mathcal{L}_1}\boxtimes i^0_{11}\bigr)\circ i^1_{10} =F_{00}\cdot\mathrm{Id}_{\mathcal{L}_1}.
\end{gather*}
Thus if $F_{00}\neq 0$, then $\mathcal{L}_1$ is rigid with evaluation $\frac{1}{F_{00}}\cdot\pi^0_{11}$ and coevaluation $i^0_{11}$.

To show that $F_{00}\neq 0$, we will use the hexagon axiom, which asserts in particular that the two compositions
\begin{align*}
& \mathcal{L}_1\boxtimes(\mathcal{L}_1\boxtimes\mathcal{L}_1)\xrightarrow{\mathrm{Id}\boxtimes\mathcal{R}} \mathcal{L}_1\boxtimes(\mathcal{L}_1\boxtimes\mathcal{L}_1)\xrightarrow{\mathcal{A}} (\mathcal{L}_1\boxtimes\mathcal{L}_1)\boxtimes\mathcal{L}_1\xrightarrow{\mathcal{R}\boxtimes\mathrm{Id}} (\mathcal{L}_1\boxtimes\mathcal{L}_1)\boxtimes\mathcal{L}_1,\\
& \mathcal{L}_1\boxtimes(\mathcal{L}_1\boxtimes\mathcal{L}_1)\xrightarrow{\mathcal{A}} (\mathcal{L}_1\boxtimes\mathcal{L}_1)\boxtimes\mathcal{L}_1\xrightarrow{\mathcal{R}} \mathcal{L}_1\boxtimes(\mathcal{L}_1\boxtimes{L}_1)\xrightarrow{\mathcal{A}} (\mathcal{L}_1\boxtimes\mathcal{L}_1)\boxtimes\mathcal{L}_1
\end{align*}
are equal; here we have dropped labels on morphisms for brevity. Composing these two compositions with $\widetilde{\Pi}_k$ for $m=0,2$ yields constraints on the $F$-matrix (see, for example, \cite[equation~2.3]{Aboum}), namely
\begin{gather*}
R_1^k F_{kl} R_1^m = \sum_{m=0,2} F_{km} R_{m1}^1 F_{ml}
\end{gather*}
for $k,l\in\lbrace 0,2\rbrace$,
where $R_{m1}^1$ is defined by $\pi_{1m}^1\circ\mathcal{R}_{\mathcal{L}_m,\mathcal{L}_1}=R_{m1}^1\cdot\pi_{m1}^1$. By our conventions, $R_{m1}^1=1$ for $m=0,2$, and thus using~\eqref{eqn:r-values}, the above equation is equivalent to
\begin{gather}\label{eqn:hex-contraint}
(-1)^{pq}\left[\begin{matrix}
F_{00} & -F_{02}\\
-F_{20} & F_{22}\\
\end{matrix}\right] = \left[\begin{matrix}
F_{00} & F_{02}\\
F_{20} & F_{22}\\
\end{matrix}\right]^2.
\end{gather}
The equation for the upper left entry yields
$
(-1)^{pq}F_{00} = F_{00}^2 + F_{02}F_{20}$,
and thus the determinant of $F$ is a multiple of $F_{00}$:
\begin{align*}
\det(F) = F_{00}F_{22}-F_{02}F_{20} = F_{00}F_{22}-(-1)^{pq}F_{00}+F_{00}^2 = F_{00}(F_{22}-(-1)^{pq}+F_{00}).
\end{align*}
Since the matrix $F$ is invertible and thus $\det(F)\neq 0$, it follows that $F_{00}\neq 0$ as well. This proves that $\mathcal{L}_1$ is rigid and self-dual.

Rigidity of the remaining simple objects $\mathcal{L}_n$ for $n\geq 2$ now follows by induction on $n$. Indeed, assuming by induction that $\mathcal{L}_n$ is rigid, then $\mathcal{L}_{n+1}$ is a direct summand of
$
\mathcal{L}_{1}\boxtimes\mathcal{L}_n\cong\mathcal{L}_{n-1}\oplus\mathcal{L}_{n+1}$,
which is rigid and self-dual because it is a tensor product of rigid and self-dual objects of $\mathcal{C}_{\mathfrak{sl}_2}$
(see, for example, \cite[Lemma~A.3]{KL4}).
Thus $\mathcal{L}_{n+1}$ is also rigid and self-dual (see, for example, \cite[Lemma~5.9]{MS-cpq-Vir}.
\end{proof}

\begin{Remark}
We emphasize that the modules $\mathcal{L}_n$, $n\geq 0$, are \textit{not} rigid when considered as objects of $\mathcal{O}_{c_{p,q}}$. This is because the unit object of $\mathcal{O}_{c_{p,q}}$ is different from that of $\mathcal{C}_{\mathfrak{sl}_2}$.
\end{Remark}

\begin{Remark}
It is not difficult to find all solutions of \eqref{eqn:hex-contraint}. Considering the possibilities $F_{02}=0$ and $F_{02}\neq 0$ separately, the invertible solutions for $F$ are given by
\begin{gather*}
\left[\begin{matrix}
(-1)^{pq} & 0\\
0 & (-1)^{pq}\\
\end{matrix}\right],\qquad\quad\left[\begin{matrix}
-\frac{1}{2}(-1)^{pq} & t\\
-\frac{3}{4t} & -\frac{1}{2}(-1)^{pq}\\
\end{matrix}\right],\quad t\in\mathbb{C}^\times.
\end{gather*}
These solutions imply that the intrinsic dimension of $\mathcal{L}_1$, which is defined to be the endomorphism of $\mathcal{L}_0$ obtained by composing the evaluation and coevaluation morphisms of $\mathcal{L}_1$, is either~$\pm 1$ or $\pm 2$. The next theorem will rule out the first possibility, and thus the actual $F$-matrix is given by the second matrix above for some $t\in\mathbb{C}^\times$ (and $t$ will depend on the choice of normalizations for $\pi_{11}^k$, $k=0,2$).
\end{Remark}

\begin{Remark}
As far as we are aware, our proof of Theorem~\ref{thm:Csl2-rigid} is the first rigidity proof for VOAs that uses explicit calculation of $F$-matrices via general categorical principles. In previous rigidity proofs for VOAs, such as \cite{ CMY2, Hu-rigidity, MS-cpq-Vir, MY-cp1-Vir, TW1}, $F$-matrix entries have been calculated or constrained by analytic methods, such as by solving regular singular point differential equations.

In principal, it might be possible to prove $\mathcal{L}_1=\mathcal{L}_{3p-1,1}$ is rigid in $\mathcal{C}_{\mathfrak{sl}_2}$ by such analytic methods, using BPZ partial differential equations derived from explicit expressions for singular vectors given by the Benoit--Saint-Aubin formula~\cite{BS-A}. But although these differential equations are explicit \cite[Section~5.3]{KK}, they have rather high order and do not seem particularly easy to solve explicitly.
Thus in Theorem~\ref{thm:Csl2-rigid} we have used the hexagon axiom to constrain $F$ instead.

Alternatively, as we remarked above, we could use \cite[Theorem 1.1]{EP} to prove $\mathcal{L}_n$ is rigid in $\mathcal{C}_{\mathfrak{sl}_2}$. This would require showing that $\dim\mathrm{End}(\mathcal{L}_n^{\boxtimes m})< m!$ for some $m\in\mathbb{Z}_{\geq 1}$, and this is rather easy from the fusion rules \eqref{eqn:unif-fus-rules}. In examples of vertex algebraic tensor categories where the fusion rules are not fully known, the methods from our proof of Theorem~\ref{thm:Csl2-rigid} may be more useful.
\end{Remark}

By \eqref{eqn:unif-fus-rules} and Theorems \ref{thm:Csl2-tensor} and \ref{thm:Csl2-rigid}, $\mathcal{C}_{\mathfrak{sl}_2}$ is a rigid semisimple tensor category with the same fusion rules as the category $\operatorname{Rep} \mathfrak{sl}_2$ of finite-dimensional $\mathfrak{sl}_2$-modules. Such categories were classified up to tensor equivalence in \cite{KaWe}, so we could use this classification to identify~$\mathcal{C}_{\mathfrak{sl}_2}$.
However, here we will mainly focus on the full tensor subcategory $\mathcal{C}_{\mathrm{PSL}_2}\subseteq\mathcal{C}_{\mathfrak{sl}_2}$ whose objects are isomorphic to finite direct sums of the modules $\mathcal{L}_{2n}$, $n\in\mathbb{Z}_{\geq 0}$. This subcategory has the same fusion rules as the category $\operatorname{Rep} \mathrm{PSL}_2$ of finite-dimensional modules for the algebraic group~$\mathrm{PSL}_2(\mathbb{C})$.
\begin{Theorem}\label{thm:braid-equiv}
The category $\mathcal{C}_{\mathrm{PSL}_2}$ is braided tensor equivalent to $\operatorname{Rep} \mathrm{PSL}_2$.
\end{Theorem}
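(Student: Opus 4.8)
The plan is to upgrade the rigid semisimple tensor category $\mathcal{C}_{PSL_2}$, which by \eqref{eqn:unif-fus-rules} and Theorems \ref{thm:Csl2-tensor} and \ref{thm:Csl2-rigid} has the fusion rules of $\mathrm{Rep}\,PSL_2$, to a braided (in fact symmetric) tensor equivalence with $\mathrm{Rep}\,PSL_2$. The natural tool is Deligne's theorem characterizing $\mathrm{Rep}\,G$ for a pro-algebraic group (or the classification of symmetric fusion categories with integer dimensions via super-Tannakian reconstruction). Concretely, I would first establish that $\mathcal{C}_{PSL_2}$ is \emph{symmetric}: the braiding $\mathcal{R}$ restricted to $\mathcal{C}_{PSL_2}$ squares to the identity. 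Since the category is semisimple and generated under $\boxtimes$ by $\mathcal{L}_2$, it suffices to check the double braiding (monodromy) on $\mathcal{L}_2\boxtimes\mathcal{L}_2\cong\mathcal{L}_0\oplus\mathcal{L}_2\oplus\mathcal{L}_4$ acts as the identity on each summand; equivalently, using the ribbon/balancing structure coming from $e^{2\pi i L_0}$, the double braiding on the $\mathcal{L}_k$-summand of $\mathcal{L}_{2n}\boxtimes\mathcal{L}_{2n}$ is $e^{\pi i(h_{(k+2)p-1,1}-2h_{(2n+2)p-1,1})\cdot 2}=e^{2\pi i(h_{(k+2)p-1,1})}e^{-4\pi i h_{(2n+2)p-1,1}}$, and one checks from the explicit formula $h_{mp-1,1}=(mp-1)(mq-1)/\text{(stuff)}$—more precisely $h_{2np-1,1}=(np-1)(nq-1)\in\mathbb{Z}$ for all $n$—that these exponents are integers, so the monodromy is trivial. (This integrality is exactly why we pass to the even subcategory $\mathcal{C}_{PSL_2}$: the objects $\mathcal{L}_{2np-1,1}$ have integral conformal weight, whereas $\mathcal{L}_{(2n+1)p-1,1}$ need not.)

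Next, having a symmetric rigid semisimple $\mathbb{C}$-linear tensor category with $\mathrm{End}(\mathbf{1})=\mathbb{C}$ and subexponential growth (the fusion rules \eqref{eqn:unif-fus-rules} give $\dim\mathrm{Hom}(\mathbf{1},\mathcal{L}_2^{\boxtimes m})$ growing like the Catalan numbers, hence subexponentially), I would invoke Deligne's theorem \cite{Del} (symmetric tensor categories of subexponential growth are representation categories of affine supergroup schemes) to conclude $\mathcal{C}_{PSL_2}\simeq\mathrm{Rep}(G,\epsilon)$ for some supergroup. To pin down $G$ and rule out the super case, I would use the intrinsic dimensions: by the Remark following Theorem \ref{thm:Csl2-rigid}, $\dim\mathcal{L}_1=\pm 2$, and since $\mathcal{L}_2$ is a direct summand of $\mathcal{L}_1\boxtimes\mathcal{L}_1$ with $\mathcal{L}_0$, rigidity and self-duality give $\dim\mathcal{L}_0=1$, $\dim\mathcal{L}_1^{\boxtimes 2}=(\dim\mathcal{L}_1)^2=4=1+\dim\mathcal{L}_2$, so $\dim\mathcal{L}_2=3>0$, and inductively $\dim\mathcal{L}_{2n}=2n+1>0$; all dimensions are positive integers, which forces the parity to be trivial ($\epsilon$ trivial), so $\mathcal{C}_{PSL_2}\simeq\mathrm{Rep}\,G$ for an honest pro-algebraic group $G$. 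The fusion rules then identify $G$: the object $\mathcal{L}_2$ is a simple $3$-dimensional self-dual generator with $\mathcal{L}_2^{\boxtimes 2}=\mathbf{1}\oplus\mathcal{L}_2\oplus\mathcal{L}_4$ and no nontrivial objects of dimension $1$ or $2$, and the only such group is $PSL_2(\mathbb{C})$ (the $3$-dimensional irrep being the adjoint representation), giving the symmetric—hence braided—tensor equivalence $\mathcal{C}_{PSL_2}\simeq\mathrm{Rep}\,PSL_2$.

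The main obstacle I anticipate is the symmetry step: verifying that the restricted braiding squares to the identity requires knowing the double-braiding scalars $(R_n^k)^2$ on every $\mathcal{L}_{2n}\boxtimes\mathcal{L}_{2n}$, which in turn rests on the formula $R_n^k=e^{\pi i(h_{(k+2)p-1,1}-2h_{(n+2)p-1,1})}$ quoted before \eqref{eqn:r-values} together with the arithmetic fact that $h_{2mp-1,1}=(mp-1)(mq-1)$ is an integer for all $m$ (and $h_{(2n+2)p-1,1}$ appears doubled, so only its value mod $\tfrac12\mathbb{Z}$ matters). One must also handle the unit object $\mathcal{L}_0=\mathcal{K}_{1,1}'$, whose ``conformal weight'' is $h_{1,1}=0$, consistently. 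Once symmetry is in hand, the rest is an application of Deligne's theorem plus the dimension and fusion-rule bookkeeping above, which is routine; alternatively, one could avoid Deligne's theorem entirely and cite the classification in \cite{KaWe} of braided tensor categories with $\mathrm{Rep}\,\mathfrak{sl}_2$ fusion rules, then observe that among the possibilities only the one with trivial associator-twist and symmetric braiding restricts correctly to the even part, yielding $\mathrm{Rep}\,PSL_2$.
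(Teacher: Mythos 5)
Your proposal is correct in substance, but it takes a genuinely different route from the paper. The paper first identifies $\mathcal{C}_{\mathfrak{sl}_2}$ using Kazhdan--Wenzl's \cite[Theorem $A_\infty$]{KaWe} classification of tensor categories with $\mathrm{Rep}\,\mathfrak{sl}_2$ fusion rules: this places $\mathcal{C}_{\mathfrak{sl}_2}$ among $3$-cocycle twists of $\mathrm{Rep}\,U_\zeta(\mathfrak{sl}_2)$, and then the balancing equation $\mathcal{R}^2_{\mathcal{L}_1,\mathcal{L}_1}=(-1)^{pq}\mathrm{Id}$ is compared with the quantum group braiding formula to pin down $\zeta=\pm 1$. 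Restricting to the even subcategory (where the $3$-cocycle twist is trivial) then gives a tensor equivalence $\mathcal{C}_{PSL_2}\simeq\mathrm{Rep}\,PSL_2$, and a separate check on braidings finishes. You instead work entirely inside $\mathcal{C}_{PSL_2}$, verify symmetry directly, and invoke Deligne's super-Tannakian theorem \cite{Del}. The paper explicitly flags your route as an alternative in the Remark immediately following the theorem (``another way to prove Theorem \ref{thm:braid-equiv} would be to show that $\mathrm{Rep}\,PSL_2$ is characterized as a super-Tannakian category by its fusion rules''), so the approaches are known to be interchangeable. Your symmetry step can be streamlined: rather than squaring the scalars $R_n^k$, note that every simple $\mathcal{L}_{2n}$ has conformal weight $h_{2np-1,1}=(np-1)(nq-1)\in\mathbb{Z}$ and $\mathcal{L}_0=\mathcal{K}_{1,1}'$ has lowest conformal weight $0$, so all ribbon twists $e^{2\pi iL_0}$ are the identity, and the balancing equation $\mathcal{R}^2_{X,Y}=\theta_{X\boxtimes Y}\circ(\theta_X^{-1}\boxtimes\theta_Y^{-1})$ makes all monodromies trivial. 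The only genuinely underdeveloped step is the final identification of $G$: having reduced to $\mathcal{C}_{PSL_2}\simeq\mathrm{Rep}\,G$ for some pro-algebraic $G$, one should argue carefully, e.g.\ using the Tannakian fiber functor to embed $G$ in $SO(V_2)\cong PSL_2(\mathbb{C})$ (via the self-dual $3$-dimensional generator) and then observe that a proper closed subgroup would force additional irreducibles or different fusion, contradicting \eqref{eqn:unif-fus-rules}. What your approach buys is avoiding the Kazhdan--Wenzl classification and the $3$-cocycle bookkeeping; what the paper's approach buys is that it also pins down the braiding on the larger category $\mathcal{C}_{\mathfrak{sl}_2}$, not just the even part, which is used elsewhere in identifying the doublet-type structure.
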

\begin{proof}
In view of \eqref{eqn:unif-fus-rules} and Theorems \ref{thm:Csl2-tensor} and \ref{thm:Csl2-rigid}, it follows from \cite[Theorem $A_\infty$]{KaWe} that $\mathcal{C}_{\mathfrak{sl}_2}$ is tensor equivalent to some $3$-cocycle twist of the category $\operatorname{Rep} U_\zeta(\mathfrak{sl}_2)$ of finite-dimensional modules for the quantum group of $\mathfrak{sl}_2$ at $\zeta=\pm 1$ or $\zeta$ not a root of unity. Here $\zeta$ denotes a~square root of the parameter denoted $q$ in \cite{KaWe}. The only difference between $\operatorname{Rep} U_\zeta(\mathfrak{sl}_2)$ and its non-trivial $3$-cocycle twist is that the $3$-cocycle twist has a new associativity isomorphism $\widetilde{\mathcal{A}}$ characterized by
\begin{gather}\label{eqn:3-cocycle-assoc}
\widetilde{\mathcal{A}}_{V_{n_1},V_{n_2},V_{n_3}} =(-1)^{n_1n_2n_3}\mathcal{A}_{V_{n_1},V_{n_2},V_{n_3}},
\end{gather}
where $V_n$ is the $(n+1)$-dimensional simple object of $\operatorname{Rep} U_\zeta(\mathfrak{sl}_2)$. It is not difficult to use~\cite[Theorem $A_{\infty}$]{KaWe} to see that the non-trivial $3$-cocycle twist of $\operatorname{Rep} U_\zeta(\mathfrak{sl}_2)$ is tensor equivalent to~$\operatorname{Rep} U_{-\zeta}(\mathfrak{sl}_2)$, so $\mathcal{C}_{\mathfrak{sl}_2}$ is tensor equivalent to $\operatorname{Rep} U_\zeta(\mathfrak{sl}_2)$ for either $\zeta=\pm 1$ or $\zeta$ not a root of unity. We need to rule out the latter possibility.

From \cite[Proposition 6.3]{GaNe}, the tensor category $\operatorname{Rep} U_\zeta(\mathfrak{sl}_2)$ admits two or four braidings characterized by the value of $\mathcal{R}_{V_1,V_1}$.
Thus $\mathcal{C}_{\mathfrak{sl}_2}$ also admits two or four braidings, and $\mathcal{R}_{\mathcal{L}_1,\mathcal{L}_1}$ in particular is given by
\begin{gather*}
\mathcal{R}_{\mathcal{L}_1,\mathcal{L}_1}^{\pm 1} = \pm \bigl(-\zeta^{3/2}\cdot i_{11}^0\circ\pi_{11}^0 +\zeta^{-1/2}\cdot i_{11}^2\circ\pi_{11}^2\bigr)
\end{gather*}
for one of the four possible choices of signs, using the notation of \eqref{eqn:proj-system}. In particular,
\begin{gather}\label{eqn:qg-braiding}
\mathcal{R}_{\mathcal{L}_1,\mathcal{L}_1}^2 =\zeta^{\pm 3}\cdot i_{11}^0\circ\pi_{11}^0 +\zeta^{\mp 1}\cdot i_{11}^2\circ\pi_{11}^2.
\end{gather}
On the other hand, the balancing equation
\begin{gather*}
{\rm e}^{2\pi{\rm i }L_0}=\mathcal{R}^2_{\mathcal{L}_1,\mathcal{L}_1}\circ\bigl({\rm e}^{2\pi{\rm i }L_0}\boxtimes {\rm e}^{2\pi{\rm i }L_0}\bigr)
\end{gather*}
for vertex algebraic tensor categories (see, for example, \cite[Theorem 4.1]{Hu-rigidity}) implies that
\begin{gather}\label{eqn:va-braiding}
\mathcal{R}_{\mathcal{L}_1,\mathcal{L}_1}^2 = e^{-4\pi {\rm i} h_{3p-1,1}}\bigl(i^0_{11}\circ\pi^0_{11}+e^{2\pi {\rm i}h_{4p-1,1}}\cdot i^2_{11}\circ\pi^2_{11}\bigr) = (-1)^{pq}\mathrm{Id}_{\mathcal{L}_1\boxtimes\mathcal{L}_1}.
\end{gather}
Comparing \eqref{eqn:qg-braiding} and \eqref{eqn:va-braiding}, we get $\zeta =(-1)^{pq}$. Thus $\mathcal{C}_{\mathfrak{sl}_2}$ is tensor equivalent to $\operatorname{Rep} U_{\pm 1}(\mathfrak{sl}_2)$, and the tensor equivalence is also braided if we equip $\operatorname{Rep} U_{\pm 1}(\mathfrak{sl}_2)$ with the appropriate one of its two braidings.

Since $\operatorname{Rep} U_{-1}(\mathfrak{sl}_2)$ is tensor equivalent to the non-trivial $3$-cocycle twist of $\operatorname{Rep} U_1(\mathfrak{sl}_2)=\operatorname{Rep} \mathfrak{sl}_2$, and since \eqref{eqn:3-cocycle-assoc} implies that the $3$-cocycle twist does not affect the associativity isomorphisms of the tensor subcategory $\operatorname{Rep} \mathrm{PSL}_2\subseteq \operatorname{Rep}\mathfrak{sl}_2$, it follows that the subcategory $\mathcal{C}_{\mathrm{PSL}_2}\subseteq\mathcal{C}_{\mathfrak{sl}_2}$ is tensor equivalent to $\operatorname{Rep} \mathrm{PSL}_2$.
Moreover, this is an equivalence of braided tensor categories if we equip $\operatorname{Rep} \mathrm{PSL}_2$ with the restriction of some suitable braiding on $\operatorname{Rep} \mathfrak{sl}_2$ or its $3$-cocycle twist.
Let $\mathcal{R}$ be the standard braiding on $\operatorname{Rep} \mathfrak{sl}_2$, which restricts to the standard braiding on~$\operatorname{Rep} \mathrm{PSL}_2$. Then the second braiding $\widetilde{\mathcal{R}}$ on~$\operatorname{Rep} \mathfrak{sl}_2$ is given by
\begin{gather*}
\widetilde{\mathcal{R}}_{V_{n_1},V_{n_2}} = (-1)^{n_1 n_2}\mathcal{R}_{V_{n_1},V_{n_2}},
\end{gather*}
while the two braidings $\widetilde{\mathcal{R}}$ on the non-trivial $3$-cocycle twist of $\operatorname{Rep} \mathfrak{sl}_2$ are given by
\begin{gather*}
\widetilde{\mathcal{R}}_{V_{n_1},V_{n_2}} =\begin{cases}
\pm {\rm i}\cdot\mathcal{R}_{V_{n_1},V_{n_2}} & \text{if } n_1 n_2 \text{ is odd},\\
\mathcal{R}_{V_{n_1},V_{n_2}} & \text{if } n_1 n_2 \text{ is even},\\
\end{cases}
\end{gather*}
where ${\rm i}$ is a square root of $-1$; to see why, simply note that all these braidings satisfy the hexagon axiom (keeping in mind \eqref{eqn:3-cocycle-assoc} in the $3$-cocycle twist case), and thus they must comprise all the braidings from \cite[Proposition 6.3]{GaNe}. These braidings on $\operatorname{Rep} \mathfrak{sl}_2$ and its $3$-cocycle twist all restrict to the standard braiding on $\operatorname{Rep} \mathrm{PSL}_2$, so the tensor equivalence between $\mathcal{C}_{\mathfrak{sl}_2}$ and~$\operatorname{Rep} \mathrm{PSL}_2$ preserves braidings.
\end{proof}

\begin{Remark}
In \cite{Del}, Deligne showed that any rigid symmetric tensor category of moderate growth over an algebraically closed field $\mathbb{K}$ of characteristic $0$ is super-Tannakian, that is, equivalent to $\operatorname{Rep}(G,z)$ for some affine supergroup scheme $G$ and suitable element $z\in G(\mathbb{K})$ of order~$2$. For $\mathcal{C}_{\mathrm{PSL}_2}$, it is easy to calculate directly
 from the balancing equation that the braiding is symmetric, and the fusion rules \eqref{eqn:unif-fus-rules} easily imply that $\mathcal{C}_{\mathrm{PSL}_2}$ has moderate growth (see, for example, the exposition \cite[Section 2.6]{EtKa} for the definition of moderate growth). Thus another way to prove Theorem~\ref{thm:braid-equiv} would be to show that $\operatorname{Rep} \mathrm{PSL}_2$ is characterized as a super-Tannakian category by its fusion rules.
\end{Remark}

\section{Commutative algebras and vertex operator algebras}\label{sec:comm-alg-and-VOA}

Let $(\mathcal{C},\boxtimes,\mathbf{1},\mathcal{A},l,r,\mathcal{R})$ be a braided tensor category. A \textit{commutative algebra} $(A,\mu_A,\iota_A)$ in $\mathcal{C}$ is an object $A$ equipped with morphisms
$
\mu_A\colon A\boxtimes A\longrightarrow A$, $\iota_A\colon \mathbf{1}\longrightarrow A$
satisfying the following properties:
\begin{enumerate}\itemsep=0pt
\item[(1)] \textit{Unitality}: $\mu_A\circ(\iota_A\boxtimes\mathrm{Id}_A)=l_A$ and $\mu_A\circ(\mathrm{Id}_A\boxtimes\iota_A)=r_A$.

\item[(2)] \textit{Associativity}: $\mu_A\circ(\mathrm{Id}_A\boxtimes\mu_A)=\mu_A\circ(\mu_A\boxtimes\mathrm{Id}_A)\circ\mathcal{A}_{A,A,A}$.

\item[(3)] \textit{Commutativity}: $\mu_A =\mu_A\circ\mathcal{R}_{A,A}$.
\end{enumerate}
Since $A$ is commutative, ideals in $A$ are the same as left ideals, and a left ideal is a subobject~${I\subseteq A}$ such that $\operatorname{Im} \mu_A\vert_{A\boxtimes I}\subseteq I$. A commutative algebra $A$ in $\mathcal{C}$ is \textit{simple} if its only (left) ideals are $0$ and $A$.

If $(A,\mu_A,\iota_A)$ and $(B,\mu_B,\iota_B)$ are two commutative algebras in $\mathcal{C}$, then an algebra isomorphism~${(A,\mu_A,\iota_A)\rightarrow(B,\mu_B,\iota_B)}$ is a $\mathcal{C}$-isomorphism $g\colon A\rightarrow B$ such that
$
g\circ\iota_A=\iota_B$, $ g\circ\mu_A=\mu_B\circ(g\boxtimes g)$.
Let $\mathrm{Aut}_\mathcal{C}(A)$ be the group of automorphisms of the commutative algebra $(A,\mu_A,\iota_A)$ in $\mathcal{C}$.

If $\mathcal{C}$ is a braided tensor category of modules for a VOA $V$, then a commutative algebra~$A$ in~$\mathcal{C}$ with an injective unit map $\iota_A$ is the same thing as a VOA $A$ which contains $V$ as a~vertex operator subalgebra and which is an object of $\mathcal{C}$ when considered as a $V$-module \cite{HKL}. In this setting, the relation between the multiplication map $\mu_A\colon A\boxtimes A\rightarrow A$ and the vertex operator~${Y_A\colon A\otimes A\rightarrow A((x))}$ is
$
\mu_A\circ\mathcal{Y}_\boxtimes = Y_A$,
where $\mathcal{Y}_\boxtimes$ is the tensor product intertwining operator of type $\binom{A}{A A}$.

In this paper, we are particularly concerned with the VOA extension $V_{c_{p,q}}\hookrightarrow W_{p,q}$ for coprime~${p,q\in\mathbb{Z}_{\geq 2}}$, where $W_{p,q}$ is the triplet VOA introduced in \cite{FGST1-Log}. The structure of $W_{p,q}$ as a~$V_{c_{p,q}}$-module follows from \cite[Definition 4.1 and Lemma 3.5.2]{FGST1-Log}; see also \cite[Proposition 5.4]{AM-W2p-alg}, \cite[Section 4]{AM-C2-W-alg}, \cite[Proposition 4.14 and Definition 5.1]{TW2}. Namely,
\begin{gather}\label{eqn:Wpq}
W_{p,q}\cong\mathcal{K}_{1,1}\oplus\bigoplus_{n=2}^\infty (2n-1)\cdot \mathcal{L}_{2np-1,1}
\end{gather}
as a $\mathcal{V}{\rm ir}$-module. Moreover, $W_{p,q}$ is not simple, since by \cite[Theorem 5.4]{TW2} there is non-split short exact sequence of $W_{p,q}$-modules
\begin{gather}\label{eqn:Wpq-exact-seq}
0\longrightarrow I_{p,q}\longrightarrow W_{p,q}\longrightarrow L_{c_{p,q}}\longrightarrow 0.
\end{gather}
Here $I_{p,q}$ is a simple ideal such that $I_{p,q}\cong\bigoplus_{n=1}^\infty (2n-1)\cdot \mathcal{L}_{2np-1,1}$ as a $\mathcal{V}{\rm ir}$-module, and $L_{c_{p,q}}$ is the simple Virasoro VOA of central charge $c_{p,q}$, which is isomorphic to $\mathcal{L}_{1,1}$ as a $\mathcal{V}{\rm ir}$-module. Note that $W_{p,q}$ is not an object of $\mathcal{O}_{c_{p,q}}$ since it has infinite length as a $\mathcal{V}{\rm ir}$-module, but it is an object of the direct limit completion, or ind-category, $\operatorname{Ind}(\mathcal{O}_{c_{p,q}})$, which is also a vertex algebraic braided tensor category by the main theorem of \cite{CMY1}. Thus $W_{p,q}$ is a commutative algebra in~$\operatorname{Ind}(\mathcal{O}_{c_{p,q}})$ by the main results of \cite{HKL}.

In the rest of this section, we will give a new construction of $W_{p,q}$. Without assuming that~\eqref{eqn:Wpq} already admits a VOA structure, we will use tensor category methods to construct a~VOA structure on the Virasoro direct sum on the right-hand side of \eqref{eqn:Wpq} such that $I_{p,q}$ is still a simple ideal. We will then show that any such VOA structure on \eqref{eqn:Wpq} is unique up to isomorphism, and therefore our construction yields the same VOA as the triplet algebra~$W_{p,q}$ constructed in~\cite{FGST1-Log}. Our construction will make it obvious that the group $\mathrm{Aut}(W_{p,q})$ of VOA automorphisms of $W_{p,q}$ is $\mathrm{PSL}_2(\mathbb{C})$. This result on $\mathrm{Aut}(W_{p,q})$ was proved for the $q=1$ case in~\cite{ALM}, and some of the work in~\cite{FGST1-Log} and \cite{TW2} suggested that the same result should hold in general, although these papers did not give full proofs in the~$q\geq 2$ case. To construct $W_{p,q}$, we will first use Theorem~\ref{thm:braid-equiv} to obtain a simple commutative algebra $W_{p,q}'$ in $\mathcal{C}_{\mathrm{PSL}_2}$ on which $\mathrm{PSL}_2(\mathbb{C})$ acts by automorphisms, such that
\begin{gather}\label{eqn:Wpq'}
W_{p,q}' \cong\mathcal{K}_{1,1}'\oplus\bigoplus_{n=2}^\infty V_{2n-2}\otimes\mathcal{L}_{2np-1,1} \cong\bigoplus_{n=0}^\infty V_{2n}\otimes\mathcal{L}_{2n}
\end{gather}
as a $\mathrm{PSL}_2(\mathbb{C})\times \mathcal{V}{\rm ir}$-module; here as before, $V_n$ denotes the $(n+1)$-dimensional simple $\mathfrak{sl}_2$-module, which is a $\mathrm{PSL}_2(\mathbb{C})$-module if and only if $n$ is even. Then we will use the non-zero $\mathcal{V}{\rm ir}$-homomorphism $\varphi\colon \mathcal{K}_{1,1}'\rightarrow\mathcal{K}_{1,1}$ to transfer this simple algebra structure on $W_{p,q}'$ to a non-simple algebra structure on the $\mathcal{V}{\rm ir}$-module $W_{p,q}$ of \eqref{eqn:Wpq} such that the $\mathcal{V}{\rm ir}$-submodule $I_{p,q}$ from~\eqref{eqn:Wpq-exact-seq} is a simple ideal.

\begin{Proposition}\label{prop2}
There is a unique $($up to isomorphism$)$ simple commutative algebra structure on the object
\begin{gather}\label{eqn:Wpq'-forget-decomp}
W_{p,q}' =\bigoplus_{n=0}^\infty (2n+1)\cdot\mathcal{L}_{2n}
\end{gather}
of $\operatorname{Ind}(\mathcal{C}_{\mathrm{PSL}_2})$. Moreover, $\mathrm{Aut}_{\operatorname{Ind}(\mathcal{C}_{\mathrm{PSL}_2})}(W_{p,q}')\cong \mathrm{PSL}_2(\mathbb{C})$ and $W_{p,q}'$ has the decomposition \eqref{eqn:Wpq'} as a $\mathrm{PSL}_2(\mathbb{C})\times\mathcal{V}{\rm ir}$-module.
\end{Proposition}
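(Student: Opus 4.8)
The plan is to transfer the entire question to $\mathrm{Rep}\,PSL_2$ along the braided tensor equivalence of Theorem~\ref{thm:braid-equiv}, which extends to an equivalence $\mathrm{Ind}(\mathcal{C}_{PSL_2})\simeq\mathrm{Ind}(\mathrm{Rep}\,PSL_2)$ identifying $\mathcal{L}_{2n}$ with the simple module $V_{2n}$. Such an equivalence carries commutative algebras to commutative algebras, matches their categorical ideals, and preserves their automorphism groups, so it is enough to treat the object $\bigoplus_{n\geq 0}(2n+1)\cdot V_{2n}$ of $\mathrm{Ind}(\mathrm{Rep}\,PSL_2)$, writing $G=PSL_2(\mathbb{C})$. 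For existence I would use the coordinate ring $\mathcal{O}(G)$ with its (left, say) regular $G$-action: this is a commutative algebra in $\mathrm{Ind}(\mathrm{Rep}\,G)$, since $G$ acts on it by algebra automorphisms and it is the union of its finite-dimensional $G$-submodules, and by the algebraic Peter--Weyl theorem its underlying $G$-module is $\bigoplus_{V\ \mathrm{simple}}V^{\oplus\dim V}\cong\bigoplus_{n\geq 0}(2n+1)\cdot V_{2n}$, matching \eqref{eqn:Wpq'-forget-decomp}. (Equivalently, this is the image of the canonical algebra of $\mathrm{Rep}\,PSL_2\boxtimes\mathcal{C}_{PSL_2}$ under the braided tensor functor $\mathrm{Forget}\boxtimes\mathrm{Id}$, as described in the introduction; cf.\ \cite{CKM2}.) To see $\mathcal{O}(G)$ is simple: $G$ is connected and smooth, so $\mathcal{O}(G)$ is reduced; a categorical ideal is then a left-translation-stable ideal $I\subseteq\mathcal{O}(G)$, its vanishing locus $V(I)$ is a closed left-translation-stable subset of $G$, and transitivity of left translation forces $V(I)\in\{\varnothing,G\}$, hence $I\in\{\mathcal{O}(G),0\}$.

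For uniqueness, let $A$ be any simple commutative algebra in $\mathrm{Ind}(\mathrm{Rep}\,G)$ with underlying $G$-module $\bigoplus_{n\geq 0}(2n+1)\cdot V_{2n}$. Its nilradical is a categorical ideal which is not all of $A$ (otherwise $1_A$ would be nilpotent and $A=0$), so by simplicity $A$ is reduced; thus $X:=\mathrm{Spec}\,A$ is an affine $G$-scheme (in fact of finite type, as one should check), and simplicity says $X$ has no nonempty proper closed $G$-stable subscheme. A $G$-orbit of minimal dimension in $X$ is then closed (its boundary is a closed $G$-stable set of strictly smaller dimension) and, being nonempty, equals $X$, so $X\cong G/H$ for a closed subgroup $H\subseteq G$. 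Since then $A\cong\mathcal{O}(G/H)=\mathcal{O}(G)^H$, the multiplicity of $V_{2n}$ in $A$ is $\dim\,(V_{2n}^*)^H$, and requiring this to equal $2n+1=\dim V_{2n}$ for every $n$ forces $H$ to act trivially on every $V_{2n}$; as $PSL_2$ acts faithfully on the adjoint module $V_2$, this gives $H=\{1\}$ and $A\cong\mathcal{O}(G)$. Hence the simple commutative algebra structure on $W_{p,q}'$ is unique up to isomorphism.

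It then remains to compute the automorphism group and the $PSL_2(\mathbb{C})\times\mathcal{V}ir$-module structure. A $G$-equivariant algebra automorphism of $\mathcal{O}(G)$ (for left translation) is the comorphism of a variety automorphism $\phi$ of $G$ commuting with all left translations; from $\phi(x)=\phi(x\cdot e)=x\cdot\phi(e)$ it is right translation by $\phi(e)$, so these automorphisms form exactly the group of right translations, isomorphic to $G$. Transferring back, $\mathrm{Aut}_{\mathrm{Ind}(\mathcal{C}_{PSL_2})}(W_{p,q}')\cong PSL_2(\mathbb{C})$, acting on $W_{p,q}'$ by right translation on $\mathcal{O}(G)$. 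Finally, since $\mathcal{C}_{PSL_2}$ is semisimple with simple objects $\mathcal{L}_{2n}$, one has $W_{p,q}'\cong\bigoplus_{n\geq 0}\mathrm{Hom}_{\mathcal{V}ir}(\mathcal{L}_{2n},W_{p,q}')\otimes\mathcal{L}_{2n}$, and by Peter--Weyl the multiplicity space $\mathrm{Hom}_{\mathcal{V}ir}(\mathcal{L}_{2n},W_{p,q}')\cong V_{2n}^*$ with the right-translation $PSL_2(\mathbb{C})$-action being the contragredient; using $V_{2n}^*\cong V_{2n}$, this yields $W_{p,q}'\cong\bigoplus_{n\geq 0}V_{2n}\otimes\mathcal{L}_{2n}$ as a $PSL_2(\mathbb{C})\times\mathcal{V}ir$-module, which is \eqref{eqn:Wpq'}.

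I expect the main obstacle to be the uniqueness step: converting the purely categorical simplicity hypothesis on $A$ into the geometric assertion that $\mathrm{Spec}\,A$ is a homogeneous $G$-space — which requires first extracting reducedness, and ensuring $\mathrm{Spec}\,A$ is of finite type, from the module data — and then pinning down the stabilizer by the multiplicity count. By contrast, existence, the simplicity of $\mathcal{O}(G)$, and the automorphism-group computation are standard once the problem has been transferred through Theorem~\ref{thm:braid-equiv}.
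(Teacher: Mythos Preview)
Your approach is correct in outline and genuinely different from the paper's. Both transfer the problem along the braided equivalence of Theorem~\ref{thm:braid-equiv}, but the paper does not work directly in $\mathrm{Rep}\,PSL_2$. For existence it builds the canonical algebra in $\mathrm{Ind}(\mathrm{Rep}\,PSL_2\otimes\mathcal{C}_{PSL_2})$ via \cite{CKM2} and pushes forward along the fiber functor, then proves simplicity in $\mathrm{Ind}(\mathcal{C}_{PSL_2})$ by a direct argument using the nondegenerate invariant form on each $V_{2n}$. For uniqueness and the automorphism group, the paper makes a \emph{second} transfer, to a copy of $\mathrm{Rep}\,PSL_2$ realized inside Virasoro modules at central charge $1$, where the algebra is identified (via \cite{MY-25-Vir}) with the lattice VOA $V_{\sqrt{2}\mathbb{Z}}\cong L_1(\mathfrak{sl}_2)$, whose automorphism group is $\mathrm{Aut}(\mathfrak{sl}_2)=PSL_2(\mathbb{C})$. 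Your route through $\mathcal{O}(G)$ is more self-contained---it needs only Theorem~\ref{thm:braid-equiv} and standard facts about algebraic groups---whereas the paper's route outsources uniqueness to an external VOA classification result.

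The one genuine gap is exactly where you flag it: finite type of $\mathrm{Spec}\,A$ does not follow in any obvious way from the module decomposition, and without it the minimal-orbit argument does not start (subalgebras of Noetherian rings need not be finitely generated, so even an embedding into $\mathcal{O}(G)$ would not help directly). A clean fix that bypasses the geometry entirely: pick any $\mathbb{C}$-point $x_0\colon A\to\mathbb{C}$ (these exist because $A$ has countable dimension over the uncountable algebraically closed field $\mathbb{C}$, so $A/\mathfrak{m}=\mathbb{C}$ for every maximal ideal $\mathfrak{m}$), and define $\psi\colon A\to\mathcal{O}(G)$ by $\psi(a)(g)=x_0(g^{-1}\cdot a)$. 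Since each $a$ lies in a finite-dimensional rational $G$-submodule, $\psi(a)$ is a linear combination of matrix coefficients and hence regular; $\psi$ is then a $G$-equivariant algebra homomorphism, its kernel is a proper $G$-stable ideal and therefore zero by simplicity, and an injective $G$-map between semisimple $G$-modules with identical finite multiplicities is an isomorphism. This gives $A\cong\mathcal{O}(G)$ directly, after which your computation of the automorphisms as right translations and the Peter--Weyl decomposition go through unchanged.
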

\begin{proof}
Since $\mathcal{C}_{\mathrm{PSL}_2}$ is a rigid symmetric tensor category equivalent to $\operatorname{Rep} \mathrm{PSL}_2$ by Theorem~\ref{thm:braid-equiv}, we can ``glue'' $\operatorname{Rep} \mathrm{PSL}_2$ and $\mathcal{C}_{\mathrm{PSL}_2}$ as in \cite[Main Theorem 1]{CKM2} to obtain a simple commutative algebra in the ind-category of the Deligne tensor product category $\operatorname{Rep} \mathrm{PSL}_2\otimes\mathcal{C}_{\mathrm{PSL}_2}$ with the decomposition \eqref{eqn:Wpq'}. This algebra is essentially the canonical algebra of
$\operatorname{Rep} \mathrm{PSL}_2$; see, for example, \cite[Section 7.9]{EGNO}, and compare also with the Peter--Weyl Theorem for the compact real form~$SO_3(\mathbb{R})$ of $\mathrm{PSL}_2(\mathbb{C})$.
Applying the (forgetful) fiber functor from $\operatorname{Rep} \mathrm{PSL}_2$ to the category~$\mathcal{V}ec$ of finite-dimensional vector spaces and observing that~$\mathcal{V}ec\otimes\mathcal{C}_{\mathrm{PSL}_2}\cong\mathcal{C}_{\mathrm{PSL}_2}$ as symmetric tensor categories, we get a commutative algebra $W_{p,q}'$ in $\operatorname{Ind}(\mathcal{C}_{\mathrm{PSL}_2})$ with the decomposition~\eqref{eqn:Wpq'-forget-decomp}; see, for example, \cite[Appendix A]{MY-25-Vir}.

Since $W_{p,q}'$ is obtained from a commutative algebra in $\operatorname{Ind}(\operatorname{Rep} \mathrm{PSL}_2\otimes\mathcal{C}_{\mathrm{PSL}_2})$, the algebra multiplication $\mu_{W_{p,q}'}\colon W_{p,q}'\boxtimes W_{p,q}'\rightarrow W_{p,q}'$ is a $\mathrm{PSL}_2(\mathbb{C})$-module homomorphism if we give the~$(2n+1)$-dimensional multiplicity space of each $\mathcal{L}_{2n}$ in $W_{p,q}'$ the structure of the $\mathrm{PSL}_2(\mathbb{C})$-module $V_{2n}$ as in \eqref{eqn:Wpq'}. This is equivalent to saying that $\mathrm{PSL}_2(\mathbb{C})$ acts on $W_{p,q}'$ by algebra automorphisms. Moreover, since the original commutative algebra in $\operatorname{Ind}(\operatorname{Rep} \mathrm{PSL}_2\otimes\mathcal{C}_{\mathrm{PSL}_2})$ is simple, $W_{p,q}'$ has no non-zero proper $\mathrm{PSL}_2(\mathbb{C})$-invariant ideals. We claim that this implies $W_{p,q}'$ has no non-zero proper ideals and thus is simple as an algebra in $\operatorname{Ind}(\mathcal{C}_{\mathrm{PSL}_2})$.

The proof of the claim is similar to part of the proof of \cite[Proposition C.1]{MY-25-Vir}. First, since~$\operatorname{Ind}(\mathcal{C}_{\mathrm{PSL}_2})$ is semisimple, any non-zero ideal of $W_{p,q}'$ contains a copy of $\mathcal{L}_{2n}$ for some ${n\in\mathbb{Z}_{\geq 0}}$. Thus it is enough to show that for any $n\in\mathbb{Z}_{\geq 0}$ and non-zero $v\in V_{2n}$, the ideal generated by~${v\otimes\mathcal{L}_{2n}\subseteq W_{p,q}'}$ contains $\mathcal{L}_0$, since $\mu_{W_{p,q}'}\vert_{W_{p,q}'\boxtimes\mathcal{L}_0} = r_{W_{p,q}'}'$ is surjective.
To prove this, let
\begin{gather*}
i_m\colon\ V_{2m}\otimes\mathcal{L}_{2m}\longrightarrow W_{p,q}',\qquad \pi_m\colon\ W_{p,q}'\longrightarrow V_{2m}\otimes\mathcal{L}_{2m}
\end{gather*}
for $m\in\mathbb{Z}_{\geq 0}$ be the obvious inclusion and projection morphisms in $\operatorname{Ind}(\mathcal{C}_{\mathrm{PSL}_2})$. Since $V_{2n}\otimes\mathcal{L}_{2n}$ is $\mathrm{PSL}_2(\mathbb{C})$-invariant, it generates a $\mathrm{PSL}_2(\mathbb{C})$-invariant ideal which must be all of $W_{p,q}'$. This implies in particular that for any $n\in\mathbb{Z}_{\geq 0}$,
$
\pi_0\circ\mu_{W_{p,q}'}\circ(i_m\boxtimes i_n)\neq 0
$
for some $m$, and in fact $m=n$ since $\mathcal{L}_m\boxtimes\mathcal{L}_n$ contains $\mathcal{L}_0$ only if $m=n$. Then since $\mu_{W_{p,q}'}$ is a~$\mathrm{PSL}_2(\mathbb{C})$-homomorphism,
\begin{gather}\label{eqn:contain-L0}
\pi_0\circ\mu_{W_{p,q}'}\circ(i_m\boxtimes i_n) =(\cdot,\cdot)_{2n}\otimes\pi_{2n,2n}^0,
\end{gather}
where $(\cdot,\cdot)_{2n}$ is the unique (up to scaling) non-degenerate $\mathrm{PSL}_2(\mathbb{C})$-invariant bilinear form on~$V_{2n}$ and $\pi_{2n,2n}^0\colon \mathcal{L}_{2n}\boxtimes\mathcal{L}_{2n}\rightarrow\mathcal{L}_0$ is as in \eqref{eqn:proj-system} (and is surjective). Now we want the ideal generated by~${v\otimes\mathcal{L}_{2n}}$ to contain $\mathcal{L}_0$ for any non-zero $v\in V_{2n}$. In fact, taking $v'\in V_{2n}$ such that $(v',v)_{2n}\neq 0$, \eqref{eqn:contain-L0} implies that
$
\operatorname{Im} \pi_0\circ\mu_{W_{p,q}'}\vert_{(v'\otimes\mathcal{L}_{2n})\boxtimes (v\otimes\mathcal{L}_{2n})} =\mathcal{L}_0$.
Since the ideal generated by $v\otimes\mathcal{L}_{2n}$ is semisimple, it thus contains $\mathcal{L}_0$ as required. This completes the proof that $W_{p,q}'$ is a simple algebra in $\operatorname{Ind}(\mathcal{C}_{\mathrm{PSL}_2})$.

Finally, to show that the simple commutative algebra structure on \eqref{eqn:Wpq'-forget-decomp} is unique up to isomorphism and that $\mathrm{PSL}_2(\mathbb{C})$ is the full automorphism group of $W_{p,q}'$, we may replace $\mathcal{C}_{\mathrm{PSL}_2}$ with any symmetric tensor category $\mathcal{C}$ equivalent to $\operatorname{Rep} \mathrm{PSL}_2$ and then replace $W_{p,q}'$ with the corresponding simple algebra in $\operatorname{Ind}(\mathcal{C})$. That is, it is enough to find $\mathcal{C}$ such that any simple commutative algebra in $\operatorname{Ind}(\mathcal{C})$ of the form
\begin{gather*}
A=\bigoplus_{n=0}^\infty (2n+1)\cdot X_{2n},
\end{gather*}
where $X_{2n}$ is the image of $V_{2n}$ under a symmetric tensor equivalence $\operatorname{Rep} \mathrm{PSL}_2\rightarrow\mathcal{C}$, is unique up to isomorphism and has automorphism group $\mathrm{PSL}_2(\mathbb{C})$. See, for example, \cite[Appendix A]{MY-25-Vir} and the last paragraph in the proof of \cite[Theorem 7.1]{MY-25-Vir} for why this is sufficient.

One possibility for $\mathcal{C}$ is a subcategory of the braided tensor category $\mathcal{O}_1$ of $C_1$-cofinite modules for the Virasoro VOA of central charge $1$. Namely, we take $\mathcal{C}$ to be the full subcategory of~$\mathcal{O}_1$ whose objects are isomorphic to finite direct sums of simple $\mathcal{V}{\rm ir}$-modules $X_{2n}$ of central charge~$1$ and lowest conformal weight \smash{$h^{(1)}_{2n+1,1} = n^2$}. It is shown that $\mathcal{C}\cong\operatorname{Rep} \mathrm{PSL}_2$ as symmetric tensor categories in \cite[Example 4.12]{McR-cpt-orb}. Moreover, \cite[Theorem B.1]{MY-25-Vir} shows that any simple VOA, equivalently simple commutative algebra, $A$ in $\operatorname{Ind}(\mathcal{C})$ such that $A\cong\bigoplus_{n=0}^\infty (2n+1)\cdot X_{2n}$ as a~$\mathcal{V}{\rm ir}$-module is isomorphic to the $\mathfrak{sl}_2$-root lattice VOA $V_{\sqrt{2}\mathbb{Z}}$. Now, the group of algebra automorphisms of $V_{\sqrt{2}\mathbb{Z}}$ is the group of VOA automorphisms that fix $X_0$; but this is the group of all VOA automorphisms since $X_0$ is the Virasoro vertex operator subalgebra of $V_{\sqrt{2}\mathbb{Z}}$ and hence is fixed by any automorphism. Then since $V_{\sqrt{2}\mathbb{Z}}$ is isomorphic to the simple affine VOA of~$\mathfrak{sl}_2$ at level $1$, which is generated by its conformal weight $1$ space that is a Lie algebra isomorphic to $\mathfrak{sl}_2$, the automorphism group of $V_{\sqrt{2}\mathbb{Z}}$ is isomorphic to $\mathrm{Aut}(\mathfrak{sl}_2) =\mathrm{Ad}(SL_2(\mathbb{C})) =\mathrm{PSL}_2(\mathbb{C})$. This completes the proof of the proposition.
\end{proof}

Now we need to adjust the simple commutative algebra structure on $W_{p,q}'$ in $\operatorname{Ind}(\mathcal{C}_{\mathrm{PSL}_2})$ to get a non-simple commutative algebra structure on the $\mathcal{V}{\rm ir}$-module $W_{p,q}$ in \eqref{eqn:Wpq}, which is an object of $\operatorname{Ind}(\mathcal{O}_{c_{p,q}})$. For future applications, we work in a general situation. Let $(\mathcal{C},\boxtimes,\mathbf{1},\mathcal{A},l,r,\mathcal{R})$ be a braided tensor category, and let $\mathcal{C}'\subseteq\mathcal{C}$ be a full subcategory which is closed under $\boxtimes$. We assume $\mathcal{C}'$ has an object $\mathbf{1}'$ together with a morphism $\varphi\colon \mathbf{1}'\rightarrow\mathbf{1}$ in $\mathcal{C}$ such that
\begin{gather}\label{eqn:l'-r'-gen-def}
l_X' = l_X\circ(\varphi\boxtimes\mathrm{Id}_X),\qquad r_X'=r_X\circ(\mathrm{Id}_X\boxtimes\varphi)
\end{gather}
are isomorphisms for any object $X$ in $\mathcal{C}'$ (like in \eqref{eqn:ln'-rn'-def}), and such that
 $(\mathcal{C}',\boxtimes,\mathcal{A},l',r',\mathcal{R})$ is a~braided tensor category. For example, we could take $\mathcal{C}=\operatorname{Ind}(\mathcal{O}_{c_{p,q}})$ and $\mathcal{C}'=\operatorname{Ind}(\mathcal{C}_{\mathrm{PSL}_2})$.

Now suppose we have objects $A$ and $A'$ in $\mathcal{C}$ and $\mathcal{C}'$, respectively, equipped with a morphism~${\Phi\colon A\rightarrow A'}$. For example, if $A=\mathbf{1}\oplus J$ and $A'=\mathbf{1}'\oplus J$ for some object $J$ in $\mathcal{C}'$, then we could take $\Phi=\varphi\oplus\mathrm{Id}_J$. We also assume that the maps
\begin{gather}\label{eqn:A0-iso}
\Phi\circ -\colon\ \operatorname{Hom}(\mathbf{1}',A')\longrightarrow\operatorname{Hom}(\mathbf{1}',A),\qquad -\circ\varphi\colon\ \operatorname{Hom}(\mathbf{1},A)\longrightarrow\operatorname{Hom}(\mathbf{1}',A)
\end{gather}
and
\begin{gather}
\Phi\circ -\colon\ \operatorname{Hom}\bigl(\bigl(A'\bigr)^{\boxtimes n},A'\bigr)\longrightarrow\operatorname{Hom}\bigl(\bigl(A'\bigr)^{\boxtimes n},A\bigr),\nonumber\\ -\circ\Phi^{\boxtimes n}\colon\ \operatorname{Hom}\bigl(A^{\boxtimes n},A\bigr)\longrightarrow\operatorname{Hom}\bigl(\bigl(A'\bigr)^{\boxtimes n},A\bigr)\label{eqn:An-iso}
\end{gather}
for $n=1,2,3$ are isomorphisms. Note that these maps would obviously be isomorphisms if $\Phi$ and $\varphi$ were isomorphisms in $\mathcal{C}$, but we are \textit{not} assuming this. Note also that \eqref{eqn:A0-iso} and the $n=2$ case of \eqref{eqn:An-iso} yield isomorphisms
\begin{align*}
\begin{aligned}
\operatorname{Hom}(\mathbf{1}',A') & \longrightarrow\operatorname{Hom}(\mathbf{1},A),\\
\iota_{A'} & \longmapsto \iota_A,
\end{aligned}\qquad
\begin{aligned}
 \operatorname{Hom}(A'\boxtimes A',A') & \longrightarrow\operatorname{Hom}(A\boxtimes A,A),\\
\mu_{A'} & \longmapsto \mu_A,
\end{aligned}
\end{align*}
such that
\begin{gather}\label{eqn:A'-A-structure-reln}
\Phi\circ\iota_{A'} =\iota_A\circ\varphi,\qquad \Phi\circ\mu_{A'} =\mu_A\circ(\Phi\boxtimes\Phi).
\end{gather}
Similarly, the $n=1$ case of \eqref{eqn:An-iso} yields an isomorphism
\begin{gather*}
\operatorname{Hom}(A',A') \longrightarrow\operatorname{Hom}(A,A),\qquad
g' \longmapsto g
\end{gather*}
such that
\begin{gather}\label{eqn:g'-g-reln}
\Phi\circ g' = g\circ\Phi.
\end{gather}
With this setup, we now prove the following.

\begin{Theorem}\label{thm:C'-alg-to-C-alg}
In the setting of the previous paragraph,
\begin{enumerate}\itemsep=0pt
\item[$(1)$] $(A',\mu_{A'},\iota_{A'})$ is a commutative algebra in $\mathcal{C}'$ if and only if $(A,\mu_A,\iota_A)$ is a commutative algebra in $\mathcal{C}$, where $(\iota_{A'},\mu_{A'})$ and $(\iota_A,\mu_A)$ are related by \eqref{eqn:A'-A-structure-reln}.

\item[$(2)$] A $\mathcal{C}'$-morphism $g'\colon A'\rightarrow A'$ is an algebra isomorphism between two commutative algebra structures \smash{$\bigl(A',\mu_{A'}^{(1)},\iota_{A'}^{(1)}\bigr)$} and \smash{$\bigl(A',\mu_{A'}^{(2)},\iota_{A'}^{(2)}\bigr)$} if and only if $g\colon A\rightarrow A$ defined by \eqref{eqn:g'-g-reln} is an~algebra isomorphism between \smash{$\bigl(A,\mu_{A}^{(1)},\iota_{A}^{(1)}\bigr)$} and \smash{$\bigl(A,\mu_{A}^{(2)},\iota_{A}^{(2)}\bigr)$}, where \smash{$\bigl(\mu_A^{(i)},\iota_A^{(i)}\bigr)$} for $i=1,2$ are defined by \eqref{eqn:A'-A-structure-reln}.
\end{enumerate}
\end{Theorem}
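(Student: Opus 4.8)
The plan is to prove both parts by transport of structure, exploiting that the maps in \eqref{eqn:A0-iso} and \eqref{eqn:An-iso} (for $n=0,1,2,3$) are bijections compatible with composition. First I would record that each composite bijection $\mathrm{Hom}((A')^{\boxtimes n},A')\xrightarrow{\sim}\mathrm{Hom}(A^{\boxtimes n},A)$ built from \eqref{eqn:An-iso}, namely $(-\circ\Phi^{\boxtimes n})^{-1}\circ(\Phi\circ-)$ and, in the case $n=0$, $(-\circ\varphi)^{-1}\circ(\Phi\circ-)$, sends a morphism $f'$ to the unique $f$ with $f\circ\Phi^{\boxtimes n}=\Phi\circ f'$, where I adopt the convention $\Phi^{\boxtimes 0}:=\varphi$; the bracketing chosen for $(A')^{\boxtimes n}$ is immaterial since different bracketings are identified via the (natural) associator. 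These bijections carry $\iota_{A'}\mapsto\iota_A$ and $\mu_{A'}\mapsto\mu_A$ --- which is precisely the content of \eqref{eqn:A'-A-structure-reln} --- the $n=1$ bijection carries $\mathrm{Id}_{A'}\mapsto\mathrm{Id}_A$, and since $g\circ\Phi=\Phi\circ g'$ and $h\circ\Phi=\Phi\circ h'$ force $(g\circ h)\circ\Phi=\Phi\circ(g'\circ h')$, the $n=1$ bijection is even a monoid isomorphism $(\mathrm{End}(A'),\circ)\xrightarrow{\sim}(\mathrm{End}(A),\circ)$; in particular it restricts to a group isomorphism on automorphisms, so $g'$ is invertible if and only if $g$ is.

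For part (1), I would check that under the appropriate bijection each of the three algebra axioms for $(A',\mu_{A'},\iota_{A'})$ translates into the same axiom for $(A,\mu_A,\iota_A)$; since the maps are bijections, this yields the ``if and only if''. For associativity both sides of the axiom lie in $\mathrm{Hom}((A')^{\boxtimes 3},A')$, and composing on the left with $\Phi$ and repeatedly using $\Phi\circ\mu_{A'}=\mu_A\circ(\Phi\boxtimes\Phi)$, bifunctoriality of $\boxtimes$, and naturality of $\mathcal{A}$ (which is the ambient associator, by hypothesis) shows that the $n=3$ bijection sends $\mu_{A'}\circ(\mathrm{Id}_{A'}\boxtimes\mu_{A'})$ to $\mu_A\circ(\mathrm{Id}_A\boxtimes\mu_A)$ and sends $\mu_{A'}\circ(\mu_{A'}\boxtimes\mathrm{Id}_{A'})\circ\mathcal{A}_{A',A',A'}$ to $\mu_A\circ(\mu_A\boxtimes\mathrm{Id}_A)\circ\mathcal{A}_{A,A,A}$. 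Commutativity is handled identically with the $n=2$ bijection, using naturality of the (ambient) braiding $\mathcal{R}$.

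The one genuinely delicate step, and the main obstacle, is the unit axiom, since the unit isomorphisms $l',r'$ of $\mathcal{C}'$ are not the ambient ones but are twisted by $\varphi$ as in \eqref{eqn:l'-r'-gen-def}, and $\varphi$ need not be invertible. I would rewrite the left unit axiom $\mu_{A'}\circ(\iota_{A'}\boxtimes\mathrm{Id}_{A'})=l'_{A'}$ as the statement $\mu_{A'}\circ(\iota_{A'}\boxtimes\mathrm{Id}_{A'})\circ(l'_{A'})^{-1}=\mathrm{Id}_{A'}$ in $\mathrm{End}(A')$ and apply the $n=1$ bijection. To identify the image of the left-hand side I would first obtain $\Phi\circ\mu_{A'}\circ(\iota_{A'}\boxtimes\mathrm{Id}_{A'})=\mu_A\circ(\iota_A\boxtimes\mathrm{Id}_A)\circ(\varphi\boxtimes\Phi)$ from \eqref{eqn:A'-A-structure-reln}, and then use the identity $\varphi\boxtimes\mathrm{Id}_{A'}=l_{A'}^{-1}\circ l'_{A'}$ (a restatement of \eqref{eqn:l'-r'-gen-def}, valid since $l_{A'}$ is an ambient isomorphism) to rewrite $(\varphi\boxtimes\Phi)\circ(l'_{A'})^{-1}=(\mathrm{Id}_{\mathbf{1}}\boxtimes\Phi)\circ l_{A'}^{-1}=l_A^{-1}\circ\Phi$ by naturality of $l$; hence the image is $\mu_A\circ(\iota_A\boxtimes\mathrm{Id}_A)\circ l_A^{-1}$, and since $\mathrm{Id}_{A'}\mapsto\mathrm{Id}_A$, the left unit axiom for $A'$ is equivalent to that for $A$. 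The right unit axiom is symmetric, using $r'$ and naturality of $r$. This completes part (1).

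For part (2), with the algebra structures $(\mu_{A'}^{(i)},\iota_{A'}^{(i)})$ and the corresponding $(\mu_A^{(i)},\iota_A^{(i)})$ given by \eqref{eqn:A'-A-structure-reln}, I would treat the two intertwining conditions separately. Applying the $n=0$ bijection and using $g\circ\Phi=\Phi\circ g'$ together with $\Phi\circ\iota_{A'}^{(i)}=\iota_A^{(i)}\circ\varphi$ shows that $g'\circ\iota_{A'}^{(1)}=\iota_{A'}^{(2)}$ is equivalent to $g\circ\iota_A^{(1)}=\iota_A^{(2)}$; applying the $n=2$ bijection and using $g\circ\Phi=\Phi\circ g'$, $\Phi\circ\mu_{A'}^{(i)}=\mu_A^{(i)}\circ(\Phi\boxtimes\Phi)$, and bifunctoriality of $\boxtimes$ shows that $g'\circ\mu_{A'}^{(1)}=\mu_{A'}^{(2)}\circ(g'\boxtimes g')$ is equivalent to $g\circ\mu_A^{(1)}=\mu_A^{(2)}\circ(g\boxtimes g)$. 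Together with the group isomorphism on automorphisms recorded above (so that $g'$ is invertible iff $g$ is), this gives the claimed equivalence, and as a by-product an isomorphism between the automorphism groups of the two commutative algebras. Apart from the unit axiom in part (1), every step is a routine diagram chase with naturality and the defining relations \eqref{eqn:A'-A-structure-reln} and \eqref{eqn:g'-g-reln}.
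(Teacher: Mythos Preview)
Your proposal is correct and follows essentially the same approach as the paper: both transport each algebra axiom through the bijections \eqref{eqn:A0-iso}--\eqref{eqn:An-iso} via the defining relations \eqref{eqn:A'-A-structure-reln} and \eqref{eqn:g'-g-reln}, with the only nontrivial step being the unit axiom, which you handle exactly as the paper does (using $l'_{A'}=l_{A'}\circ(\varphi\boxtimes\mathrm{Id}_{A'})$ and naturality of $l$). Your framing of the $n=1$ bijection as a monoid isomorphism is a slightly cleaner way to get the invertibility equivalence for part (2), but otherwise the arguments coincide.
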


\begin{proof}
(1) For the left unit property of a commutative algebra, we calculate
\begin{align*}
\mu_A\circ(\iota_A\boxtimes\mathrm{Id}_A)\circ l_A^{-1}\circ\Phi & = \mu_A\circ(\iota_A\boxtimes\mathrm{Id}_A)\circ(\mathrm{Id}_{\mathbf{1}}\boxtimes\Phi)\circ l_{A'}^{-1}\nonumber\\
& =\mu_A\circ(\mathrm{Id}_{A}\boxtimes\Phi)\circ(\iota_A\boxtimes\mathrm{Id}_{A'})\circ(\varphi\boxtimes\mathrm{Id}_{A'})\circ(l_{A'}')^{-1}\nonumber\\
& =\mu_A\circ(\Phi\boxtimes\Phi)\circ(\iota_{A'}\boxtimes\mathrm{Id}_{A'})\circ(l_{A'}')^{-1} \nonumber\\
&=\Phi\circ\mu_{A'}\circ(\iota_{A'}\boxtimes\mathrm{Id}_{A'})\circ(l_{A'}')^{-1}
\end{align*}
using the naturality of the left unit isomorphisms in $\mathcal{C}$, \eqref{eqn:l'-r'-gen-def}, and \eqref{eqn:A'-A-structure-reln}.
Since the two maps in~\eqref{eqn:An-iso} are isomorphisms in the $n=1$ case, it follows that
\begin{gather*}
\mu_A\circ(\iota_A\boxtimes\mathrm{Id}_A)\circ l_A^{-1} =\mathrm{Id}_A\longleftrightarrow \mu_{A'}\circ(\iota_{A'}\boxtimes\mathrm{Id}_{A'})\circ(l_{A'}')^{-1}=\mathrm{Id}_{A'}.
\end{gather*}
Thus the left unit property holds for $(A',\mu_{A'},\iota_{A'})$ if and only if it holds for $(A,\mu_A,\iota_A)$.

It follows similarly that the right unit property for $(A',\mu_{A'},\iota_{A'})$ is equivalent to the right unit property for $(A,\mu_A,\iota_A)$. Alternatively, this follows from the left unit property and the equivalence of the commutativity of $\mu_{A'}$ and $\mu_A$, which we prove next. Since
\begin{align*}
\mu_A\circ\mathcal{R}_{A,A}\circ(\Phi\boxtimes\Phi) =\Phi\circ\mu_{A'}\circ\mathcal{R}_{A',A'},\qquad\mu_A\circ(\Phi\boxtimes\Phi)=\Phi\circ\mu_{A'}
\end{align*}
by \eqref{eqn:A'-A-structure-reln} and the naturality of the braiding isomorphisms, the assumption that the two maps in~\eqref{eqn:An-iso} are isomorphisms in the $n=2$ case implies that $\mu_A\circ\mathcal{R}_{A,A}=\mu_A$ if and only if $\mu_{A'}\circ\mathcal{R}_{A',A'}=\mu_{A'}$. Thus $\mu_A$ is commutative if and only if $\mu_{A'}$ is. Similarly, $\mu_A$ is associative if and only if $\mu_{A'}$ is because
\begin{align*}
\mu_A\circ(\mathrm{Id}_A\boxtimes\mu_A)\circ(\Phi\boxtimes(\Phi\boxtimes\Phi)) =\Phi\circ\mu_{A'}\circ(\mathrm{Id}_{A'}\boxtimes\mu_{A'})
\end{align*}
and
\begin{gather*}
\mu_A\circ(\mu_A\boxtimes\mathrm{Id}_A)\circ\mathcal{A}_{A,A,A}\circ(\Phi\boxtimes(\Phi\boxtimes\Phi)) = \Phi\circ\mu_{A'}\circ(\mu_{A'}\boxtimes\mathrm{Id}_{A'})\circ\mathcal{A}_{A',A',A'}
\end{gather*}
by \eqref{eqn:A'-A-structure-reln} and naturality of the associativity isomorphisms, and because the two maps in \eqref{eqn:An-iso} are isomorphisms in the $n=3$ case.
This proves the first statement of the theorem.

(2) Now suppose $g'\colon A'\rightarrow A'$ and $g\colon A\rightarrow A$ are two morphisms related by~\eqref{eqn:g'-g-reln}. If $g$ is a~$\mathcal{C}$-isomorphism with inverse $g^{-1}$, then $g'$ is a $\mathcal{C}'$-isomorphism with inverse $\bigl(g^{-1}\bigr)'$ characterized by \smash{$\Phi\circ\bigl(g^{-1}\bigr)'=g^{-1}\circ\Phi$} as in \eqref{eqn:g'-g-reln}. Indeed,
\begin{gather*}
\Phi\circ g'\circ\bigl(g^{-1}\bigr)' =g\circ g^{-1}\circ\Phi =\Phi
\end{gather*}
by \eqref{eqn:g'-g-reln}, and therefore \smash{$g'\circ\bigl(g^{-1}\bigr)'=\mathrm{Id}_{A'}$} since the first map of \eqref{eqn:An-iso} is an isomorphism in the $n=1$ case. Similarly, \smash{$\bigl(g^{-1}\bigr)'\circ g' =\mathrm{Id}_{A'}$}, and similarly $g$ is a~$\mathcal{C}$-isomorphism if $g'$ is a~$\mathcal{C}'$-isomorphism.

Now we consider how $g$ and $g'$ relate to different algebra structures on $A$ and $A'$. We have
\begin{gather*}
g\circ\iota^{(1)}_A\circ\varphi =\Phi\circ g'\circ\iota_{A'}^{(1)},\qquad \iota_{A}^{(2)}\circ\varphi =\Phi\circ\iota_{A'}^{(2)}
\end{gather*}
and
\begin{gather*}
g\circ\mu_A^{(1)}\circ(\Phi\boxtimes\Phi) =\Phi\circ g'\circ\mu_{A'}^{(1)},\qquad \mu_{A}^{(2)}\circ(g\boxtimes g)\circ(\Phi\boxtimes\Phi) =\Phi\circ\mu_{A'}^{(2)}\circ(g'\boxtimes g')
\end{gather*}
by \eqref{eqn:A'-A-structure-reln} and \eqref{eqn:g'-g-reln}. Thus
\begin{gather*}
g\circ\iota_A^{(1)}=\iota_A^{(2)}\longleftrightarrow g'\circ\iota_{A'}^{(1)} =\iota_{A'}^{(2)}
\end{gather*}
since the maps in \eqref{eqn:A0-iso} are an isomorphism, and
\begin{gather*}
g\circ\mu_A^{(1)}=\mu_A^{(2)}\circ(g\boxtimes g) \longleftrightarrow g'\circ\mu_{A'}^{(1)}=\mu_{A'}^{(2)}\circ(g'\boxtimes g')
\end{gather*}
since the maps in \eqref{eqn:An-iso} are isomorphisms in the $n=2$ case.
This proves that $g\colon \smash{\bigl(A,\mu\raisebox{1pt}{${}_A^{(1)}$},\iota\raisebox{1pt}{${}_A^{(1)}$}\bigr)}\!\rightarrow \smash{\bigl(A,\mu\raisebox{-1pt}{${}_A^{(2)}$}, \iota\raisebox{-1pt}{${}_A^{(2)}$}\bigr)}$ is an isomorphism of $\mathcal{C}$-algebras if and only if \smash{$g'\colon \bigl(A',\mu\raisebox{-1pt}{${}_{A'}^{(1)}$},\iota\raisebox{-1pt}{${}_{A'}^{(1)}$}\bigr) \!\rightarrow\!\bigl(A',\mu\raisebox{-1pt}{${}_{A'}^{(2)}$},\iota\raisebox{-1pt}{${}_{A'}^{(2)}$}\bigr)$} is an~isomorphism of $\mathcal{C}'$-algebras.
\end{proof}

Taking \smash{$\mu_A^{(1)}=\mu_A^{(2)}$} and \smash{$\mu_{A'}^{(1)}=\mu_{A'}^{(2)}$} in part (2) of the preceding theorem, we get the following.

\begin{Corollary}\label{cor:Aug-gps-iso}
In the setting of Theorem {\rm\ref{thm:C'-alg-to-C-alg}}, suppose $(A,\mu_A,\iota_A)$ is a commutative algebra in~$\mathcal{C}$ and $(A',\mu_{A'},\iota_{A'})$ is a commutative algebra in $\mathcal{C}'$ such that $(\mu_{A},\iota_{A})$ and $(\mu_{A'},\iota_{A'})$ are related by~\eqref{eqn:A'-A-structure-reln}. Then $\mathrm{Aut}_\mathcal{C}(A)\cong\mathrm{Aut}_{\mathcal{C}'}\bigl(A'\bigr)$, with isomorphism $g\mapsto g'$ given by \eqref{eqn:g'-g-reln}.
\end{Corollary}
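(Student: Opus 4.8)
The plan is to obtain this corollary essentially for free from part (2) of Theorem \ref{thm:C'-alg-to-C-alg}, the only genuine additional work being to verify that the resulting bijection respects the group structure. First I would specialize part (2) to the degenerate case $\mu_A^{(1)}=\mu_A^{(2)}=\mu_A$, $\iota_A^{(1)}=\iota_A^{(2)}=\iota_A$ and, correspondingly, $\mu_{A'}^{(1)}=\mu_{A'}^{(2)}=\mu_{A'}$, $\iota_{A'}^{(1)}=\iota_{A'}^{(2)}=\iota_{A'}$, which is consistent with the standing hypothesis that $(\mu_A,\iota_A)$ and $(\mu_{A'},\iota_{A'})$ are related by \eqref{eqn:A'-A-structure-reln}. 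In this case part (2) says precisely that a $\mathcal{C}'$-morphism $g'\colon A'\to A'$ is an algebra automorphism of $(A',\mu_{A'},\iota_{A'})$ if and only if the $\mathcal{C}$-morphism $g\colon A\to A$ characterized by \eqref{eqn:g'-g-reln} is an algebra automorphism of $(A,\mu_A,\iota_A)$. Moreover, as established in the proof of part (2), $g$ is a $\mathcal{C}$-isomorphism if and only if $g'$ is a $\mathcal{C}'$-isomorphism, so there is no ambiguity in the word ``automorphism'' on either side.

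Next I would observe that the assignment $g\mapsto g'$ is a bijection $\mathrm{Hom}_{\mathcal{C}}(A,A)\to\mathrm{Hom}_{\mathcal{C}'}(A',A')$ even before restricting to automorphisms: since $g'$ is by definition the unique morphism with $\Phi\circ g'=g\circ\Phi$, the assignment is the composite of the isomorphism $-\circ\Phi\colon\mathrm{Hom}_{\mathcal{C}}(A,A)\to\mathrm{Hom}(A',A)$ with the inverse of the isomorphism $\Phi\circ-\colon\mathrm{Hom}_{\mathcal{C}'}(A',A')\to\mathrm{Hom}(A',A)$, both taken from the $n=1$ case of \eqref{eqn:An-iso}. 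Combining this with the previous paragraph, $g\mapsto g'$ restricts to a bijection $\mathrm{Aut}_{\mathcal{C}}(A)\to\mathrm{Aut}_{\mathcal{C}'}(A')$, with inverse $g'\mapsto g$.

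Finally I would check that $g\mapsto g'$ is a group homomorphism. The identity $\mathrm{Id}_A$ corresponds to $\mathrm{Id}_{A'}$ because $\Phi\circ\mathrm{Id}_{A'}=\mathrm{Id}_A\circ\Phi$. For composition, given $g_1,g_2\in\mathrm{Aut}_{\mathcal{C}}(A)$ with corresponding $g_1',g_2'\in\mathrm{Aut}_{\mathcal{C}'}(A')$, we have
\begin{equation*}
\Phi\circ(g_1'\circ g_2')=(\Phi\circ g_1')\circ g_2'=g_1\circ\Phi\circ g_2'=g_1\circ g_2\circ\Phi,
\end{equation*}
so $g_1'\circ g_2'$ satisfies the defining relation \eqref{eqn:g'-g-reln} for $(g_1\circ g_2)'$; since $\Phi\circ-$ is injective on $\mathrm{Hom}_{\mathcal{C}'}(A',A')$ by the $n=1$ case of \eqref{eqn:An-iso}, we conclude $(g_1\circ g_2)'=g_1'\circ g_2'$. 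Hence $g\mapsto g'$ is a group isomorphism $\mathrm{Aut}_{\mathcal{C}}(A)\xrightarrow{\sim}\mathrm{Aut}_{\mathcal{C}'}(A')$. There is no real obstacle here; the only point requiring any care is noting that both the identity-preservation and the compatibility with composition rest solely on the injectivity of $\Phi\circ-$ on endomorphisms of $A'$, which is exactly what \eqref{eqn:An-iso} supplies.
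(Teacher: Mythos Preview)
Your proposal is correct and follows exactly the paper's approach: the corollary is obtained by specializing Theorem \ref{thm:C'-alg-to-C-alg}(2) to the case $\mu_A^{(1)}=\mu_A^{(2)}$ and $\mu_{A'}^{(1)}=\mu_{A'}^{(2)}$. Your explicit verification that the bijection respects composition and identities is in fact more detailed than the paper, which simply states the corollary as an immediate consequence.
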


Next, still in the setting of Theorem~\ref{thm:C'-alg-to-C-alg}, we consider the relation between simplicity of the $\mathcal{C}'$-algebra $A'$ and ideals of the $\mathcal{C}$-algebra $A$. Since $\Phi$ need not be a $\mathcal{C}$-isomorphism, $A$ need not be~simple if $A'$ is, but it will be almost simple under mild conditions. First we prove the following.
\begin{Lemma}\label{lem:im-phi+J-ideal}
In the setting of Theorem {\rm\ref{thm:C'-alg-to-C-alg}}, suppose $(A,\mu_A,\iota_A)$ is a commutative algebra in~$\mathcal{C}$ and $(A',\mu_{A'},\iota_{A'})$ is a commutative algebra in $\mathcal{C}'$ such that $(\mu_{A},\iota_{A})$ and $(\mu_{A'},\iota_{A'})$ are related by~\eqref{eqn:A'-A-structure-reln}, and assume that $A=\mathbf{1}\oplus J$ and $A'=\mathbf{1}'\oplus J$ for some object $J$ in $\mathcal{C}'$. Assume also that~$\iota_A$ and~$\iota_{A'}$ are the inclusions of~$\mathbf{1}$ and~$\mathbf{1}'$ into the direct sums $\mathbf{1}\oplus J$ and $\mathbf{1}'\oplus J$, respectively, and that $\Phi=\varphi\oplus\mathrm{Id}_J$. Then $\operatorname{Im} \Phi=\operatorname{Im} \varphi\oplus J$ is an ideal of $A$.
\end{Lemma}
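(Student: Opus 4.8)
The plan is to check directly that $I:=\mathrm{Im}\,\Phi$ is closed under multiplication by all of $A$, i.e. that $\mu_A\circ(\mathrm{Id}_A\boxtimes\iota_I)\colon A\boxtimes I\to A$ factors through the inclusion $\iota_I\colon I\hookrightarrow A$. First I would record three structural facts. (i) Since $\Phi=\varphi\oplus\mathrm{Id}_J$, we have $\mathrm{Im}\,\Phi=\mathrm{Im}\,\varphi\oplus J$, and if $\varphi=\iota_{\mathrm{Im}\varphi}\circ\bar\varphi$ is the mono--epi factorization, then $\Phi$ factors as $\iota_I\circ s$ with $s:=\bar\varphi\oplus\mathrm{Id}_J\colon A'\twoheadrightarrow I$ an epimorphism. (ii) The relation $\Phi\circ\mu_{A'}=\mu_A\circ(\Phi\boxtimes\Phi)$ holds by \eqref{eqn:A'-A-structure-reln}. (iii) The inclusion $\iota^A_J\colon J\hookrightarrow A$ of the second summand equals $\Phi\circ j$, where $j\colon J\hookrightarrow A'$ is the inclusion of the second summand of $A'=\mathbf 1'\oplus J$; this is immediate from $\Phi=\varphi\oplus\mathrm{Id}_J$.

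Next I would use the decomposition $A=\mathbf 1\oplus J$ in the left tensor slot. Under the canonical splitting $A\boxtimes I\cong(\mathbf 1\boxtimes I)\oplus(J\boxtimes I)$ the map $\mathrm{Id}_A\boxtimes\iota_I$ restricts to $\iota_A\boxtimes\iota_I$ and $\iota^A_J\boxtimes\iota_I$ on the two summands, so $\mathrm{Im}(\mu_A\circ(\mathrm{Id}_A\boxtimes\iota_I))$ is the sum of the images of $\mu_A\circ(\iota_A\boxtimes\iota_I)$ and $\mu_A\circ(\iota^A_J\boxtimes\iota_I)$; it thus suffices to bound each of these by $I$. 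For the unit summand, unitality gives $\mu_A\circ(\iota_A\boxtimes\mathrm{Id}_A)=l_A$, so by naturality of $l$ we get $\mu_A\circ(\iota_A\boxtimes\iota_I)=l_A\circ(\mathrm{Id}_{\mathbf 1}\boxtimes\iota_I)=\iota_I\circ l_I$, whose image is exactly $I$ since $l_I$ is an isomorphism. For the $J$-summand, combining (iii) and (ii) gives $\mu_A\circ(\iota^A_J\boxtimes\Phi)=\mu_A\circ(\Phi\boxtimes\Phi)\circ(j\boxtimes\mathrm{Id}_{A'})=\Phi\circ\mu_{A'}\circ(j\boxtimes\mathrm{Id}_{A'})$, which visibly factors through $\Phi$ and hence through $\iota_I$; on the other hand, by (i), $\mu_A\circ(\iota^A_J\boxtimes\Phi)=\bigl(\mu_A\circ(\iota^A_J\boxtimes\iota_I)\bigr)\circ(\mathrm{Id}_J\boxtimes s)$, and since $\boxtimes$ is right exact, $\mathrm{Id}_J\boxtimes s$ is an epimorphism, so $\mu_A\circ(\iota^A_J\boxtimes\iota_I)$ has the same image as $\mu_A\circ(\iota^A_J\boxtimes\Phi)$, namely a subobject of $\mathrm{Im}\,\Phi=I$. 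Adding the two summand contributions yields $\mathrm{Im}(\mu_A\circ(\mathrm{Id}_A\boxtimes\iota_I))\subseteq I$, so $I$ is an ideal of $A$.

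The only genuine subtlety, and hence the step I expect to need the most care, is that the left tensor factor is the full algebra $A$ rather than $\mathrm{Im}\,\Phi$, so the multiplicativity relation (ii) by itself is not enough: it only controls $\mathrm{Im}\,\Phi$ times $\mathrm{Im}\,\Phi$. The point is that the part of $A$ not captured by $\mathrm{Im}\,\Phi$ sits entirely inside the unit summand $\mathbf 1$ (since $A=\mathbf 1\oplus J$ and $J\subseteq\mathrm{Im}\,\Phi$), and multiplication by unit-summand elements is governed by unitality, which carries $\mathbf 1\boxtimes I$ isomorphically onto $I$. So the crucial move is simply to split off the unit summand and treat it via the unit axiom. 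The one ambient hypothesis used is the right exactness of $\boxtimes$, needed to transport the image computation along the epimorphism $s$ (equivalently $\mathrm{Id}_J\boxtimes s$); this holds in all the settings of interest, in particular in $\mathcal{C}=\mathrm{Ind}(\mathcal{O}_{c_{p,q}})$.
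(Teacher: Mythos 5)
Your proof is correct and follows the same strategy as the paper: split $A=\mathbf 1\oplus J$ in the left tensor slot, handle the $\mathbf 1$-summand via unitality (so it multiplies $I$ back onto $I$ via $l_I$), and handle the $J$-summand via $\Phi\circ\mu_{A'}=\mu_A\circ(\Phi\boxtimes\Phi)$ together with $\Phi|_J=\mathrm{Id}_J$. You simply spell out more carefully the epi $s$ with $\Phi=\iota_I\circ s$ and the appeal to right-exactness of $\boxtimes$, which the paper's one-line image computation leaves implicit.
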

\begin{proof}
Since $A=\mathbf{1}\oplus J$, we need to show $\operatorname{Im} \mu_A\vert_{\mathbf{1}\boxtimes\operatorname{Im} \Phi}\subseteq\operatorname{Im} \Phi$ and $\operatorname{Im} \mu_A\vert_{J\boxtimes\operatorname{Im} \Phi}\subseteq\operatorname{Im} \Phi$. The first inclusion holds because $\mu_A\vert_{\mathbf{1}\boxtimes\operatorname{Im} \Phi} =l_{\operatorname{Im} \Phi}$, and the second holds because
\begin{gather*}
\operatorname{Im} \mu_A\vert_{J\boxtimes\operatorname{Im} \Phi} =\operatorname{Im} \mu_A\circ(\Phi\boxtimes\Phi)\vert_{J\boxtimes A'} =\operatorname{Im} \Phi\circ\mu_{A'}\vert_{J\boxtimes A'},
\end{gather*}
using \eqref{eqn:A'-A-structure-reln}.
\end{proof}

\begin{Proposition}\label{prop:A'-simple-A-almost-simple}
In the setting of Lemma {\rm\ref{lem:im-phi+J-ideal}}, assume also that any subobject $I\subseteq\operatorname{Im} \Phi$ decomposes as a direct sum $I=I_{\mathbf{1}}\oplus I_J$ in $\mathcal{C}$ with $I_{\mathbf{1}}\subseteq\operatorname{Im} \varphi$ and $I_J\subseteq J$, and that any subobject~${I'\subseteq A'}$ decomposes as a direct sum $I'=I_{\mathbf{1}'}\oplus I_J$ in $\mathcal{C}'$ with $I_{\mathbf{1}'}\subseteq\mathbf{1}'$ and $I_J\subseteq J$.
\begin{enumerate}\itemsep=0pt
\item[$(1)$] If $A'$ is a simple commutative algebra in $\mathcal{C}'$ and $\operatorname{Im} \varphi$ is a simple object of $\mathcal{C}$, then $\operatorname{Im} \Phi$ is a simple ideal of $A$.
\item[$(2)$] Conversely, if $\operatorname{Im} \Phi$ is a simple ideal of $A$ and $\mathbf{1}'$ is a~simple object of $\mathcal{C}'$, then $A'$ is a~simple commutative algebra in $\mathcal{C}'$.
\end{enumerate}
\end{Proposition}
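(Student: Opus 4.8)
The plan is to relate ideals of the $\mathcal{C}$-algebra $A$ to ideals of the $\mathcal{C}'$-algebra $A'$ via $\Phi$, exploiting that \eqref{eqn:A'-A-structure-reln} makes $\Phi$ behave like an algebra homomorphism even though neither $\Phi$ nor $\varphi$ is invertible. By Lemma~\ref{lem:im-phi+J-ideal} we have $\mathrm{Im}\,\Phi=\mathrm{Im}\,\varphi\oplus J$, and $\mathrm{Im}\,\varphi\neq 0$: in part (1) this is among the hypotheses, while in part (2) it follows from simplicity of $\mathbf{1}'$, since $l'_{\mathbf{1}'}=l_{\mathbf{1}'}\circ(\varphi\boxtimes\mathrm{Id}_{\mathbf{1}'})$ being an isomorphism is incompatible with $\varphi=0$. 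In particular $J\subsetneq\mathrm{Im}\,\Phi$. The recurring technique is to move an ideal ``supported on $J$'' across $\Phi$ --- taking a preimage $\Phi^{-1}(\cdot)\subseteq A'$ in one direction, a direct image $\Phi(\cdot)\subseteq A$ in the other --- and to check, using \eqref{eqn:A'-A-structure-reln} and $\Phi|_J=\mathrm{Id}_J$, that the result is again an ideal; one then contradicts simplicity using $J\subsetneq\mathrm{Im}\,\Phi$.

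For part (1), I would take a nonzero ideal $I\subseteq\mathrm{Im}\,\Phi$ of $A$ and, by the first decomposition hypothesis, write $I=I_{\mathbf{1}}\oplus I_J$ with $I_{\mathbf{1}}\subseteq\mathrm{Im}\,\varphi$ and $I_J\subseteq J$, so $I_{\mathbf{1}}\in\{0,\mathrm{Im}\,\varphi\}$ by simplicity of $\mathrm{Im}\,\varphi$. If $I_{\mathbf{1}}=0$, so $I=I_J\subseteq J$, then $\Phi^{-1}(I_J)\subseteq A'$ contains $I_J$, lies in $\mathcal{C}'$ by the second decomposition hypothesis, and is an ideal of $A'$; being nonzero, it equals $A'$ by simplicity, forcing $\mathrm{Im}\,\Phi=\Phi(A')=I_J\subseteq J$ --- impossible since $\mathrm{Im}\,\varphi\neq 0$. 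Hence $I_{\mathbf{1}}=\mathrm{Im}\,\varphi=\Phi(\mathbf{1}')$; since $\mathbf{1}'$ generates $A'$ as an ideal, \eqref{eqn:A'-A-structure-reln} gives that $\mathrm{Im}\,\varphi$ generates $\mathrm{Im}\,\Phi$ as an ideal of $A$, so $I=\mathrm{Im}\,\Phi$. Therefore $\mathrm{Im}\,\Phi$ is a simple ideal of $A$.

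For part (2), I would take a nonzero ideal $I'\subseteq A'$ and, by the second decomposition hypothesis, write $I'=I_{\mathbf{1}'}\oplus I_J$ with $I_{\mathbf{1}'}\subseteq\mathbf{1}'$ and $I_J\subseteq J$, so $I_{\mathbf{1}'}\in\{0,\mathbf{1}'\}$ by simplicity of $\mathbf{1}'$. If $I_{\mathbf{1}'}=\mathbf{1}'$, then $I'$ contains $\mathrm{Im}\,\iota_{A'}$, so the right unit axiom $r'_{A'}=\mu_{A'}\circ(\mathrm{Id}_{A'}\boxtimes\iota_{A'})$ gives $A'=\mu_{A'}(A'\boxtimes\mathbf{1}')\subseteq I'$ and hence $I'=A'$. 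If $I_{\mathbf{1}'}=0$, so $I'=I_J\subseteq J$, then $\Phi(I')=I_J$ is an ideal of $A$ contained in $J\subsetneq\mathrm{Im}\,\Phi$, so simplicity of $\mathrm{Im}\,\Phi$ forces $I_J\in\{0,\mathrm{Im}\,\Phi\}$; since $I_J\subseteq J$ and $\mathrm{Im}\,\varphi\neq 0$ we get $I_J=0$, i.e.\ $I'=0$, a contradiction. Hence $A'$ is simple. The one real (but mild) obstacle is exactly the non-invertibility of $\Phi$ and $\varphi$: ideals cannot simply be transported across an equivalence, so in each direction one must verify directly --- via \eqref{eqn:A'-A-structure-reln}, $\Phi|_J=\mathrm{Id}_J$, and the two decomposition hypotheses --- that the relevant preimage or image is still an ideal, and then rule out ideals lying inside $J$ using $\mathrm{Im}\,\varphi\neq 0$.
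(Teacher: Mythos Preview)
Your proposal is correct and follows essentially the same strategy as the paper: split an ideal according to the decomposition hypotheses, handle the case where the unit component is nonzero via the unit axiom, and in the ``$J$-only'' case use \eqref{eqn:A'-A-structure-reln} together with $\Phi|_J=\mathrm{Id}_J$ to transfer the ideal to the other algebra and invoke simplicity there.

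The only difference is organizational. In the $J$-only case of part (1), the paper argues that the $A'$-ideal generated by $I_J$ equals $A'$, hence $\mu_{A'}|_{J\boxtimes I_J}$ already hits $\mathbf{1}'$, and then pushes forward along $\Phi$ to see $\mathrm{Im}\,\varphi\subseteq I$, reducing to the other case; you instead observe that $\Phi^{-1}(I_J)$ is itself an ideal of $A'$ (the verification you defer is exactly $\Phi\circ\mu_{A'}=\mu_A\circ(\Phi\boxtimes\Phi)$ applied to $A'\boxtimes\Phi^{-1}(I_J)$, using that $I_J=I$ is an $A$-ideal), forcing $\Phi^{-1}(I_J)=A'$ and hence $\mathrm{Im}\,\Phi\subseteq I_J\subseteq J$, a contradiction. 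Similarly in part (2), the paper computes $\mathrm{Im}\,\mu_A|_{J\boxtimes I_J}$ to show it contains $\mathrm{Im}\,\varphi$, while you note $\Phi(I')=I_J$ is an $A$-ideal strictly inside $\mathrm{Im}\,\Phi$. Your packaging is slightly more conceptual and a touch shorter; the paper's is more explicit about which images of $\mu_A,\mu_{A'}$ are being compared. Both routes rest on the same ingredients and neither gains anything the other lacks.
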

\begin{proof}
(1) To show that $\operatorname{Im} \Phi=\operatorname{Im} \varphi\oplus J$ is a simple ideal of $A$, let $I\subseteq\operatorname{Im} \Phi$ be any non-zero ideal. By assumption, $I=I_\mathbf{1}\oplus I_J$ for $\mathcal{C}$-subobjects $I_\mathbf{1}\subseteq\operatorname{Im} \varphi$ and $I_J\subseteq J$, with either $I_\mathbf{1}\neq 0$ or~${I_J\neq 0}$. In the first case, $I_\mathbf{1}=\operatorname{Im} \varphi$ since $\operatorname{Im} \varphi$ is simple, so
\begin{gather*}
I\supseteq \operatorname{Im} \mu_A\vert_{A\boxtimes\operatorname{Im} \varphi}\supseteq\operatorname{Im} \mu_A\circ(\Phi\boxtimes\Phi)\vert_{A'\boxtimes\mathbf{1}'} =\operatorname{Im} \Phi\circ\mu_{A'}\vert_{A'\boxtimes\mathbf{1}'} =\operatorname{Im} \Phi\circ r_{A'}' =\operatorname{Im} \Phi.
\end{gather*}
Thus $I=\operatorname{Im} \Phi$, proving that $\operatorname{Im} \Phi$ is a simple ideal if $I_\mathbf{1}\neq 0$.

In the second case, that $I_J\neq 0$, the ideal $\operatorname{Im} \mu_{A'}\vert_{A'\boxtimes I_J}$ of $A'$ generated by $I_J$ is equal to $A'$ because $A'$ is simple. Thus since $\operatorname{Im} \mu_{A'}\vert_{\mathbf{1}'\boxtimes I_J}=I_J\subseteq J$, and since $\operatorname{Im} \mu_{A'}\vert_{J\boxtimes I_J}$ decomposes as the direct sum of subobjects of $\mathbf{1}'$ and $J$ by assumption, we get $\mathbf{1}'\subseteq\operatorname{Im} \mu_{A'}\vert_{J\boxtimes I_J}$. Then
\begin{gather*}
I\supseteq\operatorname{Im} \mu_A\vert_{J\boxtimes I_J} =\operatorname{Im} \mu_A\circ(\Phi\boxtimes\Phi)\vert_{J\boxtimes I_J}=\operatorname{Im} \Phi\circ\mu_{A'}\vert_{J\boxtimes I_J}\supseteq\operatorname{Im} \Phi\vert_{\mathbf{1}'} =\operatorname{Im} \varphi.
\end{gather*}
Thus again $I=\operatorname{Im} \Phi$ by the argument of the preceding paragraph. This proves the first part of the proposition.

(2) Conversely, to show that $A'$ is a simple algebra, let $I'$ be a non-zero ideal of $A'$. By assumption, $I'=I_{\mathbf{1}'}\oplus I_J$ for some $I_{\mathbf{1}'}\subseteq\mathbf{1}'$ and $I_J\subseteq J$, with either $I_{\mathbf{1}'}\neq 0$ or $I_J\neq 0$. In the first case, $I_{\mathbf{1}'}=\mathbf{1}'$ since we assume $\mathbf{1}'$ is simple in $\mathcal{C}'$. Then $I'=A'$ because $\mu_{A'}\vert_{A'\boxtimes\mathbf{1}'}=r_{\mathbf{1}'}'$ is surjective. Thus $A'$ is simple if $I_{\mathbf{1}'}\neq 0$.

In the second case, that $I_J\neq 0$, the ideal $\operatorname{Im} \mu_A\vert_{A\boxtimes I_J}$ of $A$ generated by $I_J$ is equal to~$\operatorname{Im} \Phi$ because $\operatorname{Im} \Phi$ is a simple ideal. Thus since $\operatorname{Im} \mu_A\vert_{\mathbf{1}\boxtimes I_J}=I_J\subseteq J$ and since $\operatorname{Im} \mu_A\vert_{J\boxtimes I_J}$ decomposes as a direct sum of subobjects of $\mathbf{1}$ and $J$ by assumption, we get
\begin{gather*}
\operatorname{Im} \varphi\subseteq\operatorname{Im} \mu_A\vert_{J\boxtimes I_J}=\operatorname{Im} \mu_A\circ(\Phi\boxtimes\Phi)\vert_{J\boxtimes I_J} =\operatorname{Im} \Phi\circ\mu_{A'}\vert_{J\boxtimes I_J}.
\end{gather*}
Since $\operatorname{Im} \Phi\vert_J =J$, it follows that $\operatorname{Im} \mu_{A'}\vert_{J\boxtimes I_J}$ is not contained in $J$. Since by assumption $\operatorname{Im} \mu_{A'}\vert_{J\boxtimes I_J}$ decomposes as a direct sum of subobjects of $\mathbf{1}'$ and $J$, this forces $I_{\mathbf{1}'}\neq 0$, and we get $I=A'$ as in the preceding paragraph.
\end{proof}

We can now apply the preceding general results to the case $\mathcal{C}=\operatorname{Ind}(\mathcal{O}_{c_{p,q}})$ and $\mathcal{C}'=\operatorname{Ind}(\mathcal{C}_{\mathrm{PSL}_2})$, with $\mathbf{1}=\mathcal{K}_{1,1}$ and $\mathbf{1}'=\mathcal{K}_{1,1}'=\mathcal{L}_0$. We take the map $\varphi\colon \mathbf{1}'\rightarrow\mathbf{1}$ to be the one appearing in \eqref{eqn:ln'-rn'-def}, so that \eqref{eqn:l'-r'-gen-def} holds in this setting.
Also, $\operatorname{Im} \varphi\cong\mathcal{L}_{2p-1,1}$ is simple in $\mathcal{C}$, while~$\mathbf{1}'$ is simple in $\mathcal{C}'$, as required in Proposition \ref{prop:A'-simple-A-almost-simple}.
Recalling \eqref{eqn:Wpq} and \eqref{eqn:Wpq'-forget-decomp}, we set $A=\mathbf{1}\oplus J$ and $\mathbf{1}'\oplus J$, where $J=\bigoplus_{n=1}^\infty (2n+1)\cdot\mathcal{L}_{2n}$, and then we define $\Phi=\varphi\oplus\mathrm{Id}_J$, as required in Lemma~\ref{lem:im-phi+J-ideal}. Note that as required in Proposition \ref{prop:A'-simple-A-almost-simple}, any $\mathcal{C}$-subobject $I\subseteq\operatorname{Im} \Phi$ decomposes as a direct sum of subobjects of $\operatorname{Im} \varphi\cong\mathcal{L}_{2p-1,1}$ and $J$ because $\operatorname{Im} \Phi$ is semisimple and $\mathcal{L}_{2p-1,1}$ does not occur as a direct summand of $J$. Similarly, any $\mathcal{C}'$-subobject of $A'$ decomposes as a~direct sum of subobjects of $\mathbf{1}'=\mathcal{L}_0$ and $J$ because $\mathcal{L}_0$ does not occur as a direct summand of~$J$.
We~can now prove the main theorem of this paper.

\begin{Theorem}\label{thm1}
Let $p,q\in\mathbb{Z}_{\geq 2}$ be coprime.
\begin{enumerate}\itemsep=0pt
\item[$(1)$] There is a unique up to isomorphism VOA $W_{p,q}$ of central charge $c_{p,q}$ such that
\begin{gather*}
W_{p,q}\cong\mathcal{K}_{1,1}\oplus\bigoplus_{n=2}^\infty (2n-1)\cdot\mathcal{L}_{2np-1,1}
\end{gather*}
as a $\mathcal{V}{\rm ir}$-module and such that $I_{p,q}=\bigoplus_{n=1}^\infty (2n-1)\cdot\mathcal{L}_{2np-1,1}$ is a simple ideal.

\item[$(2)$] The VOA automorphism group of $W_{p,q}$ is isomorphic to $\mathrm{PSL}_2(\mathbb{C})$, and
\begin{gather}\label{eqn:Wpq-final-decomp}
W_{p,q}\cong (V_0\otimes\mathcal{K}_{1,1})\oplus\bigoplus_{n=2}^\infty V_{2n-2}\otimes\mathcal{L}_{2np-1,1}
\end{gather}
as a $\mathrm{PSL}_2(\mathbb{C})\times\mathcal{V}{\rm ir}$-module.
\end{enumerate}
\end{Theorem}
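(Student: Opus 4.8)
The plan is to check that the setup fixed just before the theorem satisfies all hypotheses of Theorem \ref{thm:C'-alg-to-C-alg}, and then to combine Proposition \ref{prop2}, Theorem \ref{thm:C'-alg-to-C-alg}, Proposition \ref{prop:A'-simple-A-almost-simple}, Corollary \ref{cor:Aug-gps-iso}, and the correspondence of \cite{HKL}. First I would note that the triplet VOA $W_{p,q}$ of \cite{FGST1-Log} already has the two properties in part (1): it has the $\mathcal{V}ir$-decomposition \eqref{eqn:Wpq}, and $I_{p,q}$ is a simple ideal by \eqref{eqn:Wpq-exact-seq}. So it is enough to establish existence and uniqueness up to isomorphism of a VOA with those two properties, and then compute its automorphism group and refined decomposition. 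I would also observe that any VOA $W$ of central charge $c_{p,q}$ with $W\cong\mathcal{K}_{1,1}\oplus\bigoplus_{n\geq 2}(2n-1)\cdot\mathcal{L}_{2np-1,1}$ as a $\mathcal{V}ir$-module contains the Virasoro subalgebra generated by its conformal vector, and that this subalgebra is isomorphic to $V_{c_{p,q}}\cong\mathcal{K}_{1,1}$: it is a submodule of $W$ and a homomorphic image of $\mathcal{K}_{1,1}$, hence equal to $\mathcal{K}_{1,1}$, because $\mathcal{L}_{1,1}$ does not occur in $\mathrm{Soc}(W)$. Thus $W$ is an object of $\mathrm{Ind}(\mathcal{O}_{c_{p,q}})$ and, by \cite{HKL}, is the same data as a commutative algebra with injective unit in $\mathrm{Ind}(\mathcal{O}_{c_{p,q}})$ extending $V_{c_{p,q}}$. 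This reduces everything to the commutative algebra statements in the setting $\mathcal{C}=\mathrm{Ind}(\mathcal{O}_{c_{p,q}})$, $\mathcal{C}'=\mathrm{Ind}(\mathcal{C}_{PSL_2})$, $\mathbf{1}=\mathcal{K}_{1,1}$, $\mathbf{1}'=\mathcal{K}_{1,1}'$, $A=W_{p,q}$, $A'=W_{p,q}'$, $\Phi=\varphi\oplus\mathrm{Id}_J$.

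The step remaining before Theorem \ref{thm:C'-alg-to-C-alg} can be applied is the verification that the maps \eqref{eqn:A0-iso} and \eqref{eqn:An-iso} are isomorphisms for $n=1,2,3$; I expect this to be the main obstacle. The idea is that, since $\mathcal{K}_{1,1}$ is the unit of $\mathcal{C}$ and $\mathcal{K}_{1,1}'$ the unit of $\mathcal{C}'$, repeated use of Proposition \ref{prop:K11'xW} (together with $\mathcal{C}_{PSL_2}$ being closed under $\boxtimes$) yields isomorphisms $A^{\boxtimes n}\cong\mathcal{K}_{1,1}\oplus W_n$ and $(A')^{\boxtimes n}\cong\mathcal{K}_{1,1}'\oplus W_n$ for a common object $W_n$ of $\mathrm{Ind}(\mathcal{C}_{PSL_2})$ having no $\mathcal{K}_{1,1}$-constituent (the ``all-unit'' summand splits off, and every other summand is a reordered tensor power of $J$, hence lies in $\mathrm{Ind}(\mathcal{C}_{PSL_2})$), under which $\Phi^{\boxtimes n}$ is identified with $\varphi\oplus\mathrm{Id}_{W_n}$. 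Using $\Phi=\varphi\oplus\mathrm{Id}_J$, $A=\mathcal{K}_{1,1}\oplus J$, $A'=\mathcal{K}_{1,1}'\oplus J$, each of the maps in \eqref{eqn:A0-iso} and \eqref{eqn:An-iso} then reduces to the statement that $\varphi\circ-:\mathrm{Hom}(X,\mathcal{K}_{1,1}')\to\mathrm{Hom}(X,\mathcal{K}_{1,1})$ and $-\circ\varphi:\mathrm{Hom}(\mathcal{K}_{1,1},Y)\to\mathrm{Hom}(\mathcal{K}_{1,1}',Y)$ are bijections when $X$ and $Y$ are direct sums of $\mathcal{K}_{1,1}$ (or $\mathcal{K}_{1,1}'$) and objects of $\mathcal{C}_{PSL_2}$. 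This follows from the known structure of $\mathcal{K}_{1,1}$ and $\mathcal{K}_{1,1}'$: both are indecomposable with socle and top the two distinct simple modules $\mathcal{L}_{2p-1,1}$, $\mathcal{L}_{1,1}$, so $\mathrm{End}(\mathcal{K}_{1,1})=\mathrm{End}(\mathcal{K}_{1,1}')=\mathbb{C}$ and $\mathrm{Hom}(\mathcal{K}_{1,1}',\mathcal{K}_{1,1})=\mathbb{C}\varphi$; and no $\mathcal{L}_{2mp-1,1}$ with $m\geq 2$ is a composition factor of $\mathcal{K}_{1,1}$ or $\mathcal{K}_{1,1}'$, nor embeds in either (their socles being $\mathcal{L}_{2p-1,1}$ and $\mathcal{L}_{1,1}$). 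A short case check with these facts settles the claim, and hence all hypotheses of Theorem \ref{thm:C'-alg-to-C-alg} hold.

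With the hypotheses in place, part (1) follows formally. Proposition \ref{prop2} produces the simple commutative algebra $A'=W_{p,q}'$ in $\mathcal{C}'$, unique up to isomorphism; Theorem \ref{thm:C'-alg-to-C-alg}(1) transports it bijectively to a commutative algebra structure on the fixed object $A=W_{p,q}$ in $\mathcal{C}$; Lemma \ref{lem:im-phi+J-ideal} and Proposition \ref{prop:A'-simple-A-almost-simple}(1) (using that $\mathrm{Im}\,\varphi\cong\mathcal{L}_{2p-1,1}$ is simple) show $I_{p,q}=\mathrm{Im}\,\Phi$ is a simple ideal; and \cite{HKL} turns this algebra into a VOA with the required $\mathcal{V}ir$-decomposition, proving existence. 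For uniqueness, any VOA with the stated properties is, by the first paragraph, a commutative algebra structure on $W_{p,q}$ with $\mathrm{Im}\,\Phi=I_{p,q}$ a simple ideal; by the bijection of Theorem \ref{thm:C'-alg-to-C-alg}(1) and Proposition \ref{prop:A'-simple-A-almost-simple}(2) (using that $\mathcal{K}_{1,1}'$ is simple in $\mathcal{C}'$) it corresponds to a simple commutative algebra structure on $W_{p,q}'$, and Theorem \ref{thm:C'-alg-to-C-alg}(2) together with the uniqueness in Proposition \ref{prop2} identifies it with the one constructed.

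For part (2), Corollary \ref{cor:Aug-gps-iso} and Proposition \ref{prop2} give $\mathrm{Aut}_{\mathrm{Ind}(\mathcal{O}_{c_{p,q}})}(W_{p,q})\cong\mathrm{Aut}_{\mathrm{Ind}(\mathcal{C}_{PSL_2})}(W_{p,q}')\cong PSL_2(\mathbb{C})$; and since any VOA automorphism of $W_{p,q}$ fixes the vacuum and commutes with the $\mathcal{V}ir$-action, it fixes $V_{c_{p,q}}\cong\mathcal{K}_{1,1}$ pointwise and is therefore exactly a commutative algebra automorphism, so $\mathrm{Aut}(W_{p,q})\cong PSL_2(\mathbb{C})$. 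Finally, because $\Phi$ restricts to the identity on the summand $J$ and $\varphi$ maps into the $PSL_2(\mathbb{C})$-fixed summand $\mathcal{K}_{1,1}$, the $PSL_2(\mathbb{C})\times\mathcal{V}ir$-decomposition \eqref{eqn:Wpq'} of $W_{p,q}'$ transports under Corollary \ref{cor:Aug-gps-iso} to the decomposition \eqref{eqn:Wpq-final-decomp} of $W_{p,q}$, the summand $\mathcal{K}_{1,1}$ necessarily carrying the trivial $PSL_2(\mathbb{C})$-action.
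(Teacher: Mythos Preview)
Your proposal is correct and follows essentially the same route as the paper's proof: reduce via \cite{HKL} to commutative algebras in $\mathrm{Ind}(\mathcal{O}_{c_{p,q}})$, verify the isomorphism hypotheses \eqref{eqn:A0-iso}--\eqref{eqn:An-iso} by splitting off the unit summand and reducing to the elementary fact that $\varphi$ induces bijections on the relevant Hom spaces, and then combine Proposition~\ref{prop2}, Theorem~\ref{thm:C'-alg-to-C-alg}, Lemma~\ref{lem:im-phi+J-ideal}, Proposition~\ref{prop:A'-simple-A-almost-simple}, and Corollary~\ref{cor:Aug-gps-iso} exactly as you outline. Your preliminary observation that any VOA with the stated $\mathcal{V}ir$-decomposition actually contains $V_{c_{p,q}}$ (not merely $L_{c_{p,q}}$) as its Virasoro subalgebra is a point the paper takes for granted, so making it explicit is a small improvement.
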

\begin{proof}
(1) By \cite{HKL}, and using the notation in the paragraph before the theorem statement, it is equivalent to show that there is a unique commutative algebra structure on $A=\mathbf{1}\oplus J$ such that $\operatorname{Im} \varphi\oplus J$ is a simple ideal. To apply the preceding results, we still need to check that the linear maps in \eqref{eqn:A0-iso} and \eqref{eqn:An-iso} are isomorphisms. For more uniform notation, we set $A^{\boxtimes 0}=\mathbf{1}$, \smash{$\bigl(A'\bigr)^{\boxtimes 0}=\mathbf{1}'$} and $\Phi^{\boxtimes 0} =\varphi$, so that we need to show
\begin{gather*}
\Phi\circ -\colon\ \operatorname{Hom}\bigl(\bigl(A'\bigr)^{\boxtimes n},A'\bigr)\longrightarrow\operatorname{Hom}\bigl(\bigl(A'\bigr)^{\boxtimes n},A\bigr),\\ -\circ\Phi^{\boxtimes n}\colon\ \operatorname{Hom}\bigl(A^{\boxtimes n},A\bigr)\longrightarrow\operatorname{Hom}\bigl(\bigl(A'\bigr)^{\boxtimes n},A\bigr)
\end{gather*}
are isomorphisms for $n=0,1,2,3$.

To show $\Phi\circ -$ is an isomorphism, we write
\begin{align*}
\bigl(A'\bigr)^{\boxtimes n} & \cong\bigoplus_{m_1,\ldots,m_n=0}^\infty (2m_1+1)\cdots (2m_n+1)\cdot\mathcal{L}_{2m_1}\boxtimes\cdots\boxtimes\mathcal{L}_{2m_n} \nonumber\\
&\cong\bigoplus_{m_1,\ldots,m_n=0}^\infty (2m_1+1)\cdots (2m_n+1)\cdot(N_{m_1,\ldots,m_n}\cdot\mathbf{1}'\oplus J_{m_1,\ldots,m_n}),
\end{align*}
where $N_{m_1,\ldots,m_n}$ is the multiplicity of $\mathbf{1}'$ in $\mathcal{L}_{2m_1}\boxtimes\cdots\mathcal{L}_{2m_n}$ and $J_{m_1,\ldots,m_n}$ is a finite direct sum of objects $\mathcal{L}_{2m}$ for $m\geq 1$. Under this identification, and observing that
\begin{gather*}
\operatorname{Hom}(\mathbf{1}', J) = \operatorname{Hom}(J_{m_1,\ldots,m_n},\mathbf{1}')=\operatorname{Hom}(J_{m_1,\ldots,m_n},\mathbf{1})=0,
\end{gather*}
the map $\Phi\circ -$ induces a map
\begin{gather*}
\prod_{m_1,\ldots,m_n=0}^\infty (2m_1+1)\cdots (2m_n+1)\cdot (N_{m_1,\ldots,m_n}\cdot\operatorname{Hom}(\mathbf{1}',\mathbf{1}')\oplus\operatorname{Hom}(J_{m_1,\ldots,m_n},J))\\
\qquad\longrightarrow\prod_{m_1,\ldots,m_n=0}^\infty (2m_1+1)\cdots (2m_n+1)\\
\phantom{\qquad\longrightarrow\prod_{m_1,\ldots,m_n=0}^\infty }{}\times (N_{m_1,\ldots,m_n}\cdot\operatorname{Hom}(\mathbf{1}',\mathbf{1})\oplus\operatorname{Hom}(J_{m_1,\ldots,m_n},J)),
\end{gather*}
which we need to show is an isomorphism. In fact, since $\Phi=\varphi\oplus\mathrm{Id}_J$, the induced map is
\begin{gather*}
\prod_{m_1,\ldots,m_n=0}^\infty (2m_1+1)\cdots (2m_n+1)\cdot (N_{m_1,\ldots,m_n}\cdot(\varphi\circ-)\oplus\mathrm{Id}_{\operatorname{Hom}(J_{m_1,\ldots,m_n},J)}),
\end{gather*}
and this is an isomorphism because $\varphi\circ -\colon \operatorname{Hom}(\mathbf{1}',\mathbf{1}')\rightarrow\operatorname{Hom}(\mathbf{1}',\mathbf{1})$ is: it sends the basis endomorphism $\mathrm{Id}_{\mathbf{1}'}$ to the basis homomorphism $\varphi$. This proves that $\Phi\circ -$ is an isomorphism for $n=0,1,2,3$ (and in fact for any $n\in\mathbb{Z}_{\geq 0}$).

To show $-\circ\Phi^{\boxtimes n}$ is an isomorphism, the direct sum decompositions $A=\mathbf{1}\oplus J$ and $A'=\mathbf{1}'\oplus J$ together with the unit isomorphisms of $\mathcal{C}$ and $\mathcal{C}'$ imply that
\begin{gather*}
\bigl(A'\bigr)^{\boxtimes n} \cong \mathbf{1}'\oplus\bigoplus_{m=1}^n \binom{n}{m}\cdot J^{\boxtimes m},\qquad A^{\boxtimes n}\cong\mathbf{1}\oplus\bigoplus_{m=1}^n \binom{n}{m}\cdot J^{\boxtimes m}.
\end{gather*}
Thus we need the map
\begin{gather*}
\operatorname{Hom}(\mathbf{1},A)\oplus\bigoplus_{m=1}^n \binom{n}{m}\cdot\operatorname{Hom}\bigl(J^{\boxtimes m}, A\bigr)\longrightarrow\operatorname{Hom}(\mathbf{1}',A)\oplus\bigoplus_{m=1}^n \binom{n}{m}\cdot\operatorname{Hom}\bigl(J^{\boxtimes m}, A\bigr)
\end{gather*}
induced by $-\circ\Phi^{\boxtimes n}$ to be an isomorphism. Using $\Phi=\varphi\oplus\mathrm{Id}_J$ and \eqref{eqn:l'-r'-gen-def}, it is straightforward to see that this induced map is
\begin{gather*}
(-\circ\varphi)\oplus\bigoplus_{m=1}^n \binom{n}{m}\cdot\mathrm{Id}_{\operatorname{Hom}(J^{\boxtimes m},A)}.
\end{gather*}
Now, $-\circ\varphi$ is an isomorphism because under the identifications
\begin{gather*}
\operatorname{Hom}(\mathbf{1},A)\cong\operatorname{Hom}(\mathbf{1},\mathbf{1}),\qquad\operatorname{Hom}(\mathbf{1}',A)\cong\operatorname{Hom}(\mathbf{1}',\mathbf{1}),
\end{gather*}
it sends the basis endomorphism $\mathrm{Id}_\mathbf{1}$ to the basis homomorphism $\varphi$. Thus $-\circ\Phi^{\boxtimes n}$ is an isomorphism for $n=0,1,2,3$ (and in fact for all $n\in\mathbb{Z}_{\geq 0}$).

We can now use Proposition \ref{prop2}, Theorem~\ref{thm:C'-alg-to-C-alg}\,(1), and Proposition \ref{prop:A'-simple-A-almost-simple}\,(1) to show that $A=\mathbf{1}\oplus J$ has a commutative $\mathcal{C}$-algebra structure such that $\operatorname{Im} \Phi=\operatorname{Im} \varphi\oplus J$ is a simple ideal. To show that this commutative algebra structure is unique up to isomorphism, suppose~\smash{$\bigl(A,\mu_A^{(1)},\iota_A^{(1)}\bigr)$} and~\smash{$\bigl(A,\mu_A^{(2)},\iota_A^{(2)}\bigr)$} are two commutative algebra structures such that $\operatorname{Im} \Phi$ is a~simple ideal. Then Proposition \ref{prop:A'-simple-A-almost-simple}\,(2) yields two simple commutative $\mathcal{C}'$-algebras \smash{$\bigl(A',\mu_{A'}^{(1)},\iota_{A'}^{(1)}\bigr)$} and \smash{$\bigl(A',\mu_{A'}^{(2)},\iota_{A'}^{(2)}\bigr)$} which must be isomorphic by Proposition \ref{prop2}. Thus the two commutative $\mathcal{C}$-algebra structures on $A$ are isomorphic by Theorem~\ref{thm:C'-alg-to-C-alg}\,(2).

(2) By Proposition \ref{prop2} and Corollary~\ref{cor:Aug-gps-iso}, the automorphism group of the commutative algebra structure on $A=\mathbf{1}\oplus J$ from the proof of part (1) of the theorem is isomorphic to $\mathrm{PSL}_2(\mathbb{C})$. Moreover, for $g'\in\mathrm{Aut}_{\mathcal{C}'}\bigl(A'\bigr)$, where $A'$ is the simple commutative algebra of Proposition \ref{prop2}, the identities \eqref{eqn:g'-g-reln} and $\Phi=\varphi\oplus\mathrm{Id}_J$ imply that the corresponding $g\in\mathrm{Aut}_\mathcal{C}(A)$ is given by~${g=\mathrm{Id}_\mathbf{1}\oplus g'\vert_J}$ (because $(\mathrm{Id}_\mathbf{1}\oplus g')\circ\Phi=\Phi\circ g'$). Thus by \eqref{eqn:Wpq'},
\begin{gather*}
A\cong (V_0\otimes\mathbf{1})\oplus\bigoplus_{n=1}^\infty V_{2n}\otimes\mathcal{L}_{2n}
\end{gather*}
as a $\mathrm{PSL}_2(\mathbb{C})$-module.

Finally, to complete the proof of the theorem, we just need to observe that the automorphisms of $A=W_{p,q}$ considered as a commutative algebra in $\mathcal{C}$ are the same as its automorphisms considered as a VOA. Indeed, by the isomorphism between commutative algebras in $\mathcal{C}$ and VOA extensions of $V_{c_{p,q}}$ proven in \cite{HKL}, elements of $\mathrm{Aut}_\mathcal{C}(A)$ are precisely the VOA automorphisms of~$W_{p,q}$ that fix $\mathbf{1}=V_{c_{p,q}}=\mathcal{K}_{1,1}$ pointwise. But all VOA automorphisms of $W_{p,q}$ fix $V_{c_{p,q}}$ pointwise because they fix the conformal vector $\omega$ which generates $V_{c_{p,q}}$ as a VOA. This completes the proof of the theorem.
\end{proof}

Since \cite[Theorem 5.4]{TW2} shows that the ideal $I_{p,q}$ of the triplet algebra $W_{p,q}$ introduced in \cite{FGST1-Log} is simple, Theorem~\ref{thm1} immediately implies the following.

\begin{Corollary}\label{cor:Wpq-aut-gp}
The triplet algebra $W_{p,q}$ introduced in {\rm\cite{FGST1-Log}} has automorphism group isomorphic to $\mathrm{PSL}_2(\mathbb{C})$, and the $\mathrm{PSL}_2(\mathbb{C})\times\mathcal{V}{\rm ir}$-module decomposition \eqref{eqn:Wpq-final-decomp} holds.
\end{Corollary}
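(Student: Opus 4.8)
The plan is to verify that the triplet VOA $W_{p,q}$ of \cite{FGST1-Log} meets the two hypotheses of Theorem \ref{thm1}(1), so that it is identified as a VOA with the extension of $V_{c_{p,q}}$ constructed in the proof of that theorem, and then to read off both conclusions directly from Theorem \ref{thm1}(2). So the corollary really is a bookkeeping exercise in matching externally known facts about $W_{p,q}$ to the abstract uniqueness statement already proved.

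First I would recall that $W_{p,q}$ has central charge $c_{p,q}$ and, as a $\mathcal{V}ir$-module, decomposes as in \eqref{eqn:Wpq}; this is the content of Definition 4.1 and Lemma 3.5.2 of \cite{FGST1-Log} (see also \cite[Proposition 5.4]{AM-W2p-alg}, \cite[Section 4]{AM-C2-W-alg}, and Proposition 4.14 and Definition 5.1 of \cite{TW2}), as already noted just before \eqref{eqn:Wpq}. Next, by \cite[Theorem 5.4]{TW2} there is a non-split short exact sequence \eqref{eqn:Wpq-exact-seq} in which $I_{p,q}\cong\bigoplus_{n=1}^\infty(2n-1)\cdot\mathcal{L}_{2np-1,1}$ is a \emph{simple} ideal of $W_{p,q}$. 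Hence $W_{p,q}$ is a VOA of central charge $c_{p,q}$ with exactly the $\mathcal{V}ir$-module decomposition demanded in Theorem \ref{thm1}(1) and with $I_{p,q}$ a simple ideal, so by the uniqueness assertion of Theorem \ref{thm1}(1), $W_{p,q}$ is isomorphic as a VOA to the extension of $V_{c_{p,q}}$ constructed in the proof of Theorem \ref{thm1}.

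Finally, I would apply Theorem \ref{thm1}(2) to this VOA, which yields $\mathrm{Aut}(W_{p,q})\cong PSL_2(\mathbb{C})$ together with the $PSL_2(\mathbb{C})\times\mathcal{V}ir$-module decomposition \eqref{eqn:Wpq-final-decomp}. There is no genuine obstacle at this stage: the substantive work — constructing the commutative algebra structure on the direct sum \eqref{eqn:Wpq}, proving its uniqueness given that $I_{p,q}$ is a simple ideal, and computing its automorphism group — was already carried out via Proposition \ref{prop2}, Theorem \ref{thm:C'-alg-to-C-alg}, and Proposition \ref{prop:A'-simple-A-almost-simple} in the proof of Theorem \ref{thm1}. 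The only point requiring any care is confirming that the quoted inputs (the $\mathcal{V}ir$-module structure from \cite{FGST1-Log} and the simplicity of $I_{p,q}$ from \cite{TW2}) match the hypotheses of Theorem \ref{thm1} verbatim, which they do.
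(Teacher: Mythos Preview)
Your proposal is correct and follows essentially the same approach as the paper: verify from \cite{FGST1-Log} and \cite[Theorem 5.4]{TW2} that $W_{p,q}$ has the required $\mathcal{V}ir$-module decomposition with $I_{p,q}$ a simple ideal, then invoke Theorem \ref{thm1}. The paper states this even more tersely, simply noting that \cite[Theorem 5.4]{TW2} gives the simple ideal hypothesis so Theorem \ref{thm1} applies immediately.
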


\begin{Remark}
Note that there are now two independent ways to prove the existence of the VOA $W_{p,q}$ in Theorem~\ref{thm1}. The original method of \cite{FGST1-Log} defines $W_{p,q}$ as the intersection of the kernel of two screening operators on the lattice VOA~$V_{\sqrt{2pq}\mathbb{Z}}$ and then uses Felder complexes~\cite{Felder} and the socle series structure of Feigin--Fuchs modules~\cite{Feigin--Fuchs} to show that~$W_{p,q}$ has the correct decomposition as a~$\mathcal{V}{\rm ir}$-module. On the other hand, the method presented in this section defines~$W_{p,q}$ as the VOA structure corresponding (via~\cite{HKL}) to the commutative algebra obtained from $W_{p,q}'$ using the equations \eqref{eqn:A'-A-structure-reln}.
Using the first method, it is easier to see that~$W_{p,q}$ is a VOA, but it is more difficult to determine the structure of~$W_{p,q}$ as $\mathcal{V}{\rm ir}$-module, and it seems extremely difficult to rigorously determine the automorphism group of~$W_{p,q}$ using this definition. Using the second method, the main difficulty is that we need the technical fusion rule computation of Theorem~\ref{thm:Vir-fus-rules} to show that the $\mathrm{PSL}(2,\mathbb{C})\times\mathcal{V}{\rm ir}$-module $W_{p,q}'$ of \eqref{eqn:Wpq'} admits the structure of a simple commutative algebra in $\operatorname{Ind}(\operatorname{Rep} \mathrm{PSL}_2\otimes\mathcal{C}_{\mathrm{PSL}_2})$. But once we have this result, the determination of the automorphism group of $W_{p,q}$ in Theorem~\ref{thm1}\,(2) is fairly straightforward.
\end{Remark}

\section[Relations between Virasoro and triplet algebra representation theory]{Relations between Virasoro\\ and triplet algebra representation theory}\label{sec:Vir-trip-relations}

As in \cite[Section 7]{MY-cp1-Vir}, we can use Corollary~\ref{cor:Wpq-aut-gp} and VOA extension theory \cite{CKM1, CMY1,HKL}
to relate the Virasoro tensor category $\mathcal{O}_{c_{p,q}}$ studied in our previous paper \cite{MS-cpq-Vir}
with the representation category of the triplet algebra $W_{p,q}$.
Let $\operatorname{Rep}(W_{p,q})$ be the category of grading-restricted generalized $W_{p,q}$-modules, that is, modules with finite-dimensional generalized $L_0$-eigenspaces.
It is a finite abelian category and a braided tensor category \cite{Hu-C2}, but it is not rigid. On the other hand, contragredient dual modules give $\operatorname{Rep}(W_{p,q})$ the weaker duality structure of a ribbon Grothendieck--Verdier category \cite{ALSW}.

Now as discussed in the previous section $A:=W_{p,q}$ has the structure of a commutative algebra in the braided tensor category $\mathcal{C}:=\operatorname{Ind}(\mathcal{O}_{c_{p,q}})$, and $G:=\mathrm{PSL}_2(\mathbb{C})$ acts on $A$ by algebra automorphisms. Let $\mathrm{Mod}_\mathcal{C}(A)$ be the category of modules for the commutative algebra $A$ in $\mathcal{C}$. Specifically, an object of $\mathrm{Mod}_\mathcal{C}(A)$ is an object $X$ of $\mathcal{C}$ equipped with a morphism $\mu_X\colon A\boxtimes X\rightarrow X$ satisfying unitality and associativity
\begin{gather*}
\mu_X\circ(\iota_A\boxtimes\mathrm{Id}_X) = l_X,\qquad\mu_X\circ(\mathrm{Id}_X\boxtimes\mu_X)=\mu_X\circ(\mu_A\boxtimes\mathrm{Id}_X)\circ\mathcal{A}_{A,A,X}.
\end{gather*}
A morphism from $(X_1,\mu_{X_1})$ to $(X_2,\mu_{X_2})$ in $\mathrm{Mod}_\mathcal{C}(A)$ is a $\mathcal{C}$-morphism $f\colon X_1\rightarrow X_2$ such that~${
f\circ\mu_{X_1}=\mu_{X_2}\circ (\mathrm{Id}_A\boxtimes f)}$.
Let $\mathrm{Mod}^0_\mathcal{C}(A)$ be the category of local $A$-modules in $\mathcal{C}$, which consists of $A$-modules $(X,\mu_X)$ such that
\[
\mu_X\circ\mathcal{R}_{X,A}\circ\mathcal{R}_{A,X}=\mu_X.
\]
Then $\mathrm{Mod}_\mathcal{C}(A)$ is a tensor category, and its subcategory~$\mathrm{Mod}_\mathcal{C}^0(A)$ is a braided tensor category.

By \cite[Proposition 4.14 and Theorem 5.13]{TW2}, all simple objects of $\operatorname{Rep}(W_{p,q})$ are (possibly infinite) direct sums of simple $\mathcal{V}{\rm ir}$-modules in $\mathcal{O}_{c_{p,q}}$. Thus by the same argument as in the proof of \cite[Proposition 3.1.3]{CMY2}, every object of $\operatorname{Rep}(W_{p,q})$ is an object of $\operatorname{Ind}(\mathcal{O}_{c_{p,q}})$ when considered as a~$\mathcal{V}{\rm ir}$-module.
So by \cite[Theorem 3.4]{HKL} and \cite[Theorem 3.65]{CKM1}, $\operatorname{Rep}(W_{p,q})$ is a braided tensor subcategory of $\mathrm{Mod}^0_\mathcal{C}(A)$. It is a proper subcategory because, for example, the ind-category $\mathcal{C}$ contains infinite direct sums of $W_{p,q}$-modules, while $\operatorname{Rep}(W_{p,q})$ does not.

There is a monoidal tensor functor of induction $F\colon \mathcal{C}\rightarrow\mathrm{Mod}_\mathcal{C}(A)$ defined by
\begin{gather*}
F(W) = (A\boxtimes W,(\mu_A\boxtimes\mathrm{Id}_W)\circ\mathcal{A}_{A,A,W}),\qquad F(f)=\mathrm{Id}_A\boxtimes f
\end{gather*}
for objects $W$ and morphisms $f$ in $\mathcal{C}$. Induction is right exact because the tensor product $\boxtimes$ on $\mathcal{C}$ is right exact. Induction also satisfies Frobenius reciprocity, that is, there is a natural isomorphism
\begin{align}\label{eqn:Frob-rec}
\operatorname{Hom}_{\mathcal{C}}(W, X) \xrightarrow{ \sim } \operatorname{Hom}_{A}(F(W), (X,\mu_X)),\qquad
 f \longmapsto \mu_X\circ(\mathrm{Id}_A\boxtimes f)
\end{align}
for all objects $W$ in $\mathcal{C}$ and $(X,\mu_X)$ in $\mathrm{Mod}_\mathcal{C}(A)$.
\begin{Definition}
Let $\mathcal{O}_{c_{p,q}}^0$ be the full subcategory of $\mathcal{O}_{c_{p,q}}$ consisting of objects $W$ such that $F(W)$ is an object of $\operatorname{Rep}(W_{p,q})$.
\end{Definition}

Using \cite[Proposition 2.65]{CKM1}, for example, \smash{$\mathcal{O}_{c_{p,q}}^0$} is equivalently the full subcategory of objects~$W$ of $\mathcal{O}_{c_{p,q}}$ such that the double braiding $\mathcal{R}_{A,W}^2$ in $\mathcal{C}$ is the identity and such that $F(W)$ has finite-dimensional conformal weight spaces and a lower bound on conformal weights.
Because~$F$ is a monoidal functor, $\mathcal{O}_{c_{p,q}}^0$ is a monoidal subcategory of $\mathcal{O}_{c_{p,q}}$, and by
 \cite[Theorem~2.67]{CKM1} for example, \smash{$F\vert_{\mathcal{O}_{c_{p,q}}^0}\colon\mathcal{O}_{c_{p,q}}^0\rightarrow\operatorname{Rep}(W_{p,q})$} is a braided monoidal functor.

 \begin{Proposition}\label{prop:Ocpq0-first-properties}
 The braided monoidal category $\mathcal{O}_{c_{p,q}}^0$ is closed under quotients, contains all Kac modules $\mathcal{K}_{r,s}$ for $r,s\in\mathbb{Z}_{\geq 1}$, and contains all simple objects of $\mathcal{O}_{c_{p,q}}$.
 \end{Proposition}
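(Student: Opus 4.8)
The plan is to prove the three assertions in order, using that the induction functor $F$ is right exact and monoidal, that $\mathrm{Rep}(W_{p,q})$ is a finite abelian category closed under subquotients and under its tensor product, and the characterization (via \cite[Proposition 2.65]{CKM1}) of $\mathcal{O}_{c_{p,q}}^0$ as the full subcategory of those $W$ in $\mathcal{O}_{c_{p,q}}$ for which the double braiding $\mathcal{R}_{A,W}^2$ in $\mathcal{C}=\mathrm{Ind}(\mathcal{O}_{c_{p,q}})$ is the identity and $F(W)=A\boxtimes W$ is grading-restricted, where $A=W_{p,q}$. For closure under quotients, let $\pi\colon W\twoheadrightarrow\overline{W}$ be a surjection in $\mathcal{O}_{c_{p,q}}$ with $W\in\mathcal{O}_{c_{p,q}}^0$. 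By right exactness of $\boxtimes$, the map $\mathrm{Id}_A\boxtimes\pi\colon F(W)\to F(\overline{W})$ is a surjection of $A$-modules; since $F(W)\in\mathrm{Rep}(W_{p,q})$ and $\mathrm{Rep}(W_{p,q})$ is closed under quotients, $F(\overline{W})$ is grading-restricted, and by naturality of the braiding together with surjectivity of $\mathrm{Id}_A\boxtimes\pi$ we get $\mathcal{R}_{A,\overline{W}}^2=\mathrm{Id}$ from $\mathcal{R}_{A,W}^2=\mathrm{Id}$. Hence $F(\overline{W})\in\mathrm{Rep}(W_{p,q})$, i.e.\ $\overline{W}\in\mathcal{O}_{c_{p,q}}^0$.

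\emph{Kac modules.} This is the heart of the argument. For the double-braiding condition I would use the balancing equation $\theta_{M\boxtimes N}=\mathcal{R}_{M,N}^2\circ(\theta_M\boxtimes\theta_N)$, with $\theta=e^{2\pi i L_0}$ (as in the proof of Theorem \ref{thm:braid-equiv}; see \cite[Theorem 4.1]{Hu-rigidity}). The relevant facts are: (i) $h_{2np-1,1}=(np-1)(nq-1)\in\mathbb{Z}$, so $\theta_{\mathcal{L}_{2np-1,1}}=\mathrm{Id}$, and since all conformal weights of $W_{p,q}$ are integers, $\theta_A=\mathrm{Id}_A$; (ii) every composition factor $\mathcal{L}_{r',s'}$ of $\mathcal{K}_{r,s}$ and of $\mathcal{L}_{2np-1,1}\boxtimes\mathcal{K}_{r,s}$ satisfies $h_{r',s'}\equiv h_{r,s}\pmod{\mathbb{Z}}$ (Feigin--Fuchs modules are $\mathbb{Z}_{\geq 0}$-graded from their lowest weight, and exponents of intertwining operators lie in $h_{2np-1,1}+h_{r,s}+\mathbb{Z}$); and (iii) $L_0$ acts semisimply on $\mathcal{K}_{r,s}$ and on $\mathcal{L}_{2np-1,1}\boxtimes\mathcal{K}_{r,s}$, which is visible from the explicit fusion rules of \cite{MS-cpq-Vir}. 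Facts (ii)--(iii) give $\theta_{\mathcal{K}_{r,s}}=e^{2\pi i h_{r,s}}\mathrm{Id}$ and $\theta_{\mathcal{L}_{2np-1,1}\boxtimes\mathcal{K}_{r,s}}=e^{2\pi i h_{r,s}}\mathrm{Id}$, so the balancing equation forces $\mathcal{R}_{\mathcal{L}_{2np-1,1},\mathcal{K}_{r,s}}^2=\mathrm{Id}$; since $A\cong\mathcal{K}_{1,1}\oplus\bigoplus_{n\geq 2}(2n-1)\cdot\mathcal{L}_{2np-1,1}$, the double braiding is additive in its first argument, and $\mathcal{R}_{\mathcal{K}_{1,1},\mathcal{K}_{r,s}}^2=\mathrm{Id}$ because $\mathcal{K}_{1,1}$ is the unit, we conclude $\mathcal{R}_{A,\mathcal{K}_{r,s}}^2=\mathrm{Id}$ for all $r,s\geq 1$. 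For grading-restriction, $F(\mathcal{K}_{r,s})=\mathcal{K}_{r,s}\oplus\bigoplus_{n\geq 2}(2n-1)\cdot(\mathcal{L}_{2np-1,1}\boxtimes\mathcal{K}_{r,s})$; each summand lies in $\mathcal{O}_{c_{p,q}}$ and is therefore grading-restricted, and by the standard lower bound on conformal weights of fusion products (from \cite{MS-cpq-Vir}, or lower truncation of intertwining operators) the minimal conformal weight of $\mathcal{L}_{2np-1,1}\boxtimes\mathcal{K}_{r,s}$ exceeds $h_{2np-1,1}$ minus a constant depending only on $\mathcal{K}_{r,s}$, hence tends to $+\infty$; so the infinite direct sum $F(\mathcal{K}_{r,s})$ is grading-restricted. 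Thus $\mathcal{K}_{r,s}\in\mathcal{O}_{c_{p,q}}^0$. (Alternatively, for the grading-restriction one could check it only for $\mathcal{K}_{1,1}$—where $F(\mathcal{K}_{1,1})=W_{p,q}$—and, by $c_{p,q}=c_{q,p}$ symmetry, $\mathcal{K}_{1,2}$, and then obtain the rest inductively from the exact sequences $0\to\mathcal{K}_{r,s-1}\to\mathcal{K}_{1,2}\boxtimes\mathcal{K}_{r,s}\to\mathcal{K}_{r,s+1}\to 0$ and $0\to\mathcal{K}_{r-1,s}\to\mathcal{K}_{2,1}\boxtimes\mathcal{K}_{r,s}\to\mathcal{K}_{r+1,s}\to 0$, using that $\mathcal{O}_{c_{p,q}}^0$ is a monoidal subcategory closed under quotients.)

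\emph{Simple objects and main obstacle.} Each $\mathcal{K}_{r,s}$ surjects onto $\mathcal{L}_{r,s}$, so closure under quotients yields $\mathcal{L}_{r,s}\in\mathcal{O}_{c_{p,q}}^0$ for all $r,s\geq 1$, and these exhaust the simple objects of $\mathcal{O}_{c_{p,q}}$. The main obstacle is the second step: showing that the double braiding of $A$ with the Kac modules is trivial. It works precisely because the $h_{2np-1,1}$ are integers — the feature that makes $W_{p,q}$ an honest VOA rather than a more general vertex algebra — and the one input that needs genuine care is extracting from \cite{MS-cpq-Vir} enough about the composition factors and $L_0$-semisimplicity of the fusion products $\mathcal{L}_{2np-1,1}\boxtimes\mathcal{K}_{r,s}$.
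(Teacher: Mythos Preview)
Your treatment of closure under quotients and of simple objects is correct and matches the paper. The gap is in your primary argument for general Kac modules: facts (ii) and (iii) are not available from \cite{MS-cpq-Vir} for arbitrary $r,s$. That reference only determines fusion products of the form $\mathcal{K}_{1,2}\boxtimes M$ and $\mathcal{K}_{2,1}\boxtimes M$, not $\mathcal{L}_{2np-1,1}\boxtimes\mathcal{K}_{r,s}$ for $n\geq 2$ and general $(r,s)$. Your justification of (ii) via ``exponents of intertwining operators lie in $h_{2np-1,1}+h_{r,s}+\mathbb{Z}$'' is circular: the exponents of the tensor-product intertwining operator are determined by the generalized $L_0$-weights of the \emph{target} $\mathcal{L}_{2np-1,1}\boxtimes\mathcal{K}_{r,s}$, which is exactly what you are trying to pin down, and knowing the exponents are integral is equivalent to knowing $\mathcal{R}^2=\mathrm{Id}$. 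Likewise, your asserted lower bound on the minimal conformal weight of $\mathcal{L}_{2np-1,1}\boxtimes\mathcal{K}_{r,s}$ is not a general fact and requires explicit fusion data.

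The remedy is precisely your parenthetical alternative, but applied to the \emph{entire} membership question rather than only to grading-restriction---and this is exactly what the paper does. The paper verifies $\mathcal{K}_{1,2}\in\mathcal{O}_{c_{p,q}}^0$ directly by using \cite[Theorem~6.8]{MS-cpq-Vir} (which \emph{does} compute $\mathcal{K}_{1,2}\boxtimes\mathcal{L}_{2np-1,1}$) to obtain $F(\mathcal{K}_{1,2})\cong\mathcal{K}_{1,2}\oplus\bigoplus_{n\geq 2}(2n-1)\cdot\mathcal{L}_{1,2nq-2}$, checks $h_{1,2nq-2}=h_{1,2}+(np-1)(nq-2)\in h_{1,2}+\mathbb{Z}_{\geq 0}$ so that both grading-restriction and, via the balancing equation, $\mathcal{R}^2=\mathrm{Id}$ follow, and gets $\mathcal{K}_{2,1}$ by $c_{p,q}=c_{q,p}$ symmetry. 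For the remaining Kac modules the paper uses the surjection $\mathcal{K}_{2,1}^{\boxtimes(r-1)}\boxtimes\mathcal{K}_{1,2}^{\boxtimes(s-1)}\twoheadrightarrow\mathcal{K}_{r,s}$ from \cite[Theorem~4.7]{MS-cpq-Vir} together with monoidal closure and closure under quotients; your inductive scheme via the short exact sequences \eqref{eqn:K12xKrs} would serve equally well in place of that surjection.
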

 \begin{proof}
 First, $\mathcal{O}_{c_{p,q}}^0$ is closed under quotients because $\mathcal{O}_{c_{p,q}}$ and $\operatorname{Rep}(W_{p,q})$ are closed under quotients, and because $F$ is right exact and thus preserves surjections.

 Next, to show that $\mathcal{K}_{1,2}$ is an object of \smash{$\mathcal{O}_{c_{p,q}}^0$}, we note from \eqref{eqn:Wpq} and \cite[Theorem 6.8]{MS-cpq-Vir} that
 \begin{gather*}
 F(\mathcal{K}_{1,2})\cong W_{p,q}\boxtimes\mathcal{K}_{1,2}\cong\mathcal{K}_{1,2}\oplus\bigoplus_{n=2}^\infty (2n-1)\cdot\mathcal{L}_{1,2nq-2}
 \end{gather*}
 as a $\mathcal{V}{\rm ir}$-module. Since the conformal weights satisfy
 \begin{gather*}
 h_{1,2nq-2} = pqn^2-(2p+q)n+\frac{3}{2}+\frac{3p}{4q} =h_{1,2}+(np-1)(nq-2),
 \end{gather*}
 the conformal weight spaces of $F(\mathcal{K}_{1,2})$ are finite dimensional, and the conformal weights of $F(\mathcal{K}_{1,2})$ have a lower bound and are all congruent to $h_{1,2}$ mod $\mathbb{Z}$. The balancing equation for the double braiding then implies
 \begin{gather*}
 \mathcal{R}^2_{W_{p,q},\mathcal{K}_{1,2}} = {\rm e}^{2\pi{\rm i }L_0}\circ\bigl({\rm e}^{-2\pi {\rm i} L_0}\boxtimes{\rm e}^{-2\pi{\rm i } L_0}\bigr) = {\rm e}^{2\pi {\rm i }(h_{1,2}-0-h_{1,2})} =\mathrm{Id}_{W_{p,q},\mathcal{K}_{1,2}},
 \end{gather*}
 so $\mathcal{K}_{1,2}$ is an object of $\mathcal{O}_{c_{p,q}}^0$. By $c_{p,q}=c_{q,p}$ symmetry, $\mathcal{K}_{2,1}$ is also an object of $\mathcal{O}_{c_{p,q}}^0$.

 For the remaining Kac modules in $\mathcal{O}_{c_{p,q}}$, it follows from \cite[Theorem 4.7]{MS-cpq-Vir} that for $r,s\in\mathbb{Z}_{\geq 1}$, there is a surjection \smash{$\mathcal{K}_{2,1}^{\boxtimes(r-1)}\boxtimes\mathcal{K}_{1,2}^{(s-1)}\twoheadrightarrow \mathcal{K}_{r,s}$}. So because \smash{$\mathcal{O}_{c_{p,q}}^0$} is a monoidal subcategory of~\smash{$\mathcal{O}_{c_{p,q}}$} which is closed under quotients, $\mathcal{K}_{r,s}$ is an object of~\smash{$\mathcal{O}_{c_{p,q}}^0$} for all $r,s\in\mathbb{Z}_{\geq 1}$. Finally, the simple objects of $\mathcal{O}_{c_{p,q}}$ can be parametrized by $\mathcal{L}_{r,s}$ for $1\leq r\leq p$ and $s\in\mathbb{Z}_{\geq 1}$ such that $ps\geq qr$. For all such $(r,s)$, the diagrams in \cite[Section 2.3]{MS-cpq-Vir} show that $\mathcal{L}_{r,s}$ is a quotient of $\mathcal{K}_{r,s}$, so all $\mathcal{L}_{r,s}$ are objects of $\mathcal{O}_{c_{p,q}}^0$.
 \end{proof}

Because $G=\mathrm{PSL}_2(\mathbb{C})$ acts on $W_{p,q}$ by automorphisms, the category $\mathrm{Mod}_\mathcal{C}(A)$ has a $G$-equivariantization $\mathrm{Mod}_\mathcal{C}(A)$ as defined in \cite[Section 2.7]{EGNO} for example. Concretely, this is the category consisting of objects $(X,\mu_X)$ of $\mathrm{Mod}_\mathcal{C}(A)$ equipped with a continuous representation~${\varphi_X\colon G\rightarrow\mathrm{Aut}_\mathcal{C}(X)}$ such that
$
\varphi_X(g)\circ\mu_X=\mu_X\circ(g\boxtimes\varphi_X(g))
$
for all $g\in G$. Morphisms from $(X_1,\mu_{X_1},\varphi_{X_1})$ to $(X_2,\mu_{X_2},\varphi_{X_2})$ in $\mathrm{Mod}_\mathcal{C}(A)^G$ consist of all morphisms $f\colon X_1\rightarrow X_2$ in $\mathrm{Mod}_\mathcal{C}(A)$ such that
$
\varphi_{X_2}(g)\circ f =f\circ\varphi_{X_1}(g)
$
for all $g\in G$. Note that the $A$-action $\mu_X$ of an object of $\mathrm{Mod}_\mathcal{C}(A)$ is equivalent to a $V_{c_{p,q}}$-module intertwining operator $Y_X\colon W_{p,q}\otimes X\rightarrow X[\log x]\lbrace x\rbrace$ such that $Y_X=\mu_X\circ\mathcal{Y}_\boxtimes$, where $\mathcal{Y}_\boxtimes$ is the canonical tensor product $V_{c_{p,q}}$-module intertwining operator of type \smash{$\binom{X}{W_{p,q} X}$} in $\mathcal{C}$.
Thus in vertex algebraic terms, the $G$-representation $\varphi_X$ of an object of $\mathrm{Mod}_\mathcal{C}(A)$ satisfies
\begin{gather}\label{eqn:concrete-G-equiv}
\varphi_X(g)(Y_X(a,x)b) = Y_X(g\cdot a,x)\varphi_X(b)
\end{gather}
for all $g\in G$, $a\in W_{p,q}$, and $b\in X$.

The $G$-equivariantization $\mathrm{Mod}_\mathcal{C}(A)^G$ is a braided tensor category (see, for example, \cite{EGNO} or the discussion in \cite[Section 7]{MY-cp1-Vir}),
and as in \cite[Lemma 7.13]{MY-cp1-Vir},
induction defines a braided monoidal functor $F\colon\mathcal{O}_{c_{p,q}}\rightarrow\mathrm{Mod}_\mathcal{C}(A)^G$ (though we cannot say that $F$ is exact because $\mathcal{O}_{c_{p,q}}$ is not rigid). For an object $W$ in $\mathcal{O}_{c_{p,q}}$, the representation $\varphi_{F(W)}$ is defined by $\varphi_{F(W)}(g)=g\boxtimes\mathrm{Id}_W$ for all~${g\in G}$. Note that the subcategory $\operatorname{Rep}(W_{p,q})\subseteq\mathrm{Mod}_\mathcal{C}(A)$ also has a $G$-equivariantization which is a braided tensor subcategory of $\mathrm{Mod}_\mathcal{C}(A)^G$, and thus $F$ restricts to a braided tensor functor from $\mathcal{O}_{c_{p,q}}^0$ to $\operatorname{Rep}(W_{p,q})^{\mathrm{PSL}_2(\mathbb{C})}$.
\begin{Theorem}
The braided monoidal induction functor $F\colon\mathcal{C}\rightarrow\mathrm{Mod}_\mathcal{C}(A)^G$ is fully faithful. In particular, \smash{$F\vert_{\mathcal{O}_{c_{p,q}}^0}\colon\mathcal{O}_{c_{p,q}}^0\rightarrow\operatorname{Rep}(W_{p,q})^{\mathrm{PSL}_2(\mathbb{C})}$} is fully faithful.
\end{Theorem}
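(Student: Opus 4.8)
The plan is to reduce the statement to the single input that the $PSL_2(\mathbb{C})$-fixed part of $A = W_{p,q}$ is the unit object $\mathbf{1} = \mathcal{K}_{1,1}$. First I would fix objects $W_1, W_2$ in $\mathcal{C}$ and apply Frobenius reciprocity \eqref{eqn:Frob-rec}, which identifies $\mathrm{Hom}_A(F(W_1), F(W_2))$ with $\mathrm{Hom}_{\mathcal{C}}(W_1, A\boxtimes W_2)$, the bijection sending an $A$-module map $\tilde{f}$ to its restriction $\tilde{f}\circ\iota$ along the canonical embedding $\iota = (\iota_A\boxtimes\mathrm{Id}_{W_1})\circ l_{W_1}^{-1}\colon W_1\to F(W_1)$. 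Since $G = PSL_2(\mathbb{C})$ acts on $A$ by algebra automorphisms, we have $g\circ\iota_A = \iota_A$ for all $g$, so $\iota$ is fixed by the representation $\varphi_{F(W_1)}(g) = g\boxtimes\mathrm{Id}_{W_1}$. The key diagram chase --- using $g\circ\iota_A = \iota_A$, $g\circ\mu_A = \mu_A\circ(g\boxtimes g)$, and naturality of $\mathcal{A}$ --- then shows that an $A$-module map $\tilde{f}\colon F(W_1)\to F(W_2)$ is a morphism in $\mathrm{Mod}_{\mathcal{C}}(A)^G$ (that is, $\varphi_{F(W_2)}(g)\circ\tilde{f} = \tilde{f}\circ\varphi_{F(W_1)}(g)$ for all $g$) if and only if the corresponding map $f = \tilde{f}\circ\iota$ is fixed by the $G$-action on $\mathrm{Hom}_{\mathcal{C}}(W_1, A\boxtimes W_2)$ given by post-composition with $\varphi_{F(W_2)}(g) = g\boxtimes\mathrm{Id}_{W_2}$.

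Next I would identify the $G$-invariants. By Theorem \ref{thm1} and the $PSL_2(\mathbb{C})\times\mathcal{V}ir$-decomposition \eqref{eqn:Wpq-final-decomp}, the trivial $PSL_2(\mathbb{C})$-isotypic component of $A = W_{p,q}$ is precisely $V_0\otimes\mathcal{K}_{1,1} = \mathbf{1}$, included via $\iota_A$. Because $\boxtimes$ commutes with the direct sums defining $A$ as an object of $\mathrm{Ind}(\mathcal{O}_{c_{p,q}})$ and $G$ acts only on the finite-dimensional multiplicity spaces (so the $G$-action on every object of $\mathrm{Mod}_{\mathcal{C}}(A)$ is locally finite and $(-)^G$ is exact and commutes with direct sums), the isotypic decomposition of $A$ induces one of $A\boxtimes W_2$ whose trivial component is $\mathbf{1}\boxtimes W_2$. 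Hence the largest subobject of $A\boxtimes W_2$ on which $G$ acts trivially is $\mathbf{1}\boxtimes W_2\cong W_2$, and a map $f\colon W_1\to A\boxtimes W_2$ is $G$-fixed precisely when it factors through this subobject. Combining with the previous paragraph gives
\[
\mathrm{Hom}_{\mathrm{Mod}_{\mathcal{C}}(A)^G}\bigl(F(W_1), F(W_2)\bigr)\;\cong\;\mathrm{Hom}_{\mathcal{C}}(W_1, \mathbf{1}\boxtimes W_2)\;\xrightarrow{\ \sim\ }\;\mathrm{Hom}_{\mathcal{C}}(W_1, W_2),
\]
the last arrow being post-composition with $l_{W_2}$. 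A short check using naturality of $l$ then shows that along this chain of isomorphisms the morphism $F(h) = \mathrm{Id}_A\boxtimes h$ goes to $h$, so $F$ is full and faithful.

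The final assertion is then immediate: $F$ restricts to a functor $\mathcal{O}_{c_{p,q}}^0\to\mathrm{Rep}(W_{p,q})^{PSL_2(\mathbb{C})}$, and since $\mathcal{O}_{c_{p,q}}^0$ embeds as a full subcategory of $\mathcal{C}$ and $\mathrm{Rep}(W_{p,q})^{PSL_2(\mathbb{C})}$ is a full subcategory of $\mathrm{Mod}_{\mathcal{C}}(A)^G$, the restriction inherits full faithfulness. The step I expect to be the main obstacle --- and the one carrying the genuinely new content, supplied by our construction of $W_{p,q}$ and by Theorem \ref{thm1} and Corollary \ref{cor:Wpq-aut-gp} --- is the identification $A^G = \mathbf{1}$ together with its compatibility with $\boxtimes$ in the ind-category; once this is available, the remainder is routine bookkeeping with the equivariant structure and Frobenius reciprocity, paralleling \cite[Lemma 7.13]{MY-cp1-Vir}.
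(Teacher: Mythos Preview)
Your proposal is correct and follows essentially the same approach as the paper: both arguments hinge on the single input $A^G=\mathbf{1}$ from \eqref{eqn:Wpq-final-decomp}, so that $(A\boxtimes W_2)^G=\mathbf{1}\boxtimes W_2\cong W_2$, and then use the $A$-module structure (via Frobenius reciprocity) to recover a $G$-equivariant $A$-map from its restriction along $\iota_A\boxtimes\mathrm{Id}_{W_1}$. The only cosmetic difference is that the paper writes down an explicit candidate inverse $\widetilde{F}$ using the projection $\pi_A:A\to\mathbf{1}$ and checks $F\circ\widetilde{F}=\mathrm{Id}$ by a direct diagram chase, whereas you invoke Frobenius reciprocity as a black box and then pass to $G$-invariants; the underlying computation is the same.
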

\begin{proof}
We continue to use the notation $A$ for $W_{p,q}$ considered as a commutative algebra in~${\mathcal{C}=\operatorname{Ind}(\mathcal{O}_{c_{p,q}})}$, and we continue to set $G=\mathrm{PSL}_2(\mathbb{C})$. We need to show that for any objects $W_1$, $W_2$ in $\mathcal{C}$, the map
\begin{align*}
F\colon\ \operatorname{Hom}_\mathcal{C}(W_1,W_2) \longrightarrow\operatorname{Hom}_{G\times A}(F(W_1),F(W_2)),\qquad
 f \longmapsto \mathrm{Id}_A\boxtimes f
\end{align*}
is an isomorphism. Since $A=\mathbf{1}\oplus J$ as an object of $\mathcal{C}$, where $\mathbf{1}=\mathcal{K}_{1,1}$ and $J=\bigoplus_{n=1}^\infty V_{2n}\otimes\mathcal{L}_{2n}$ as a $G\times\mathcal{V}{\rm ir}$-module, we can define a projection $\pi_A\colon A\rightarrow\mathbf{1}$ such that $\pi_A\circ\iota_A=\mathrm{Id}_\mathbf{1}$. This allows us to define
\begin{gather*}
\widetilde{F}\colon\ \operatorname{Hom}_{G\times A}(F(W_1),F(W_2))\longrightarrow \operatorname{Hom}_\mathcal{C}(W_1,W_2)
\end{gather*}
such that for $\Gamma\in\operatorname{Hom}_{G\times A}(F(W_1),F(W_2))$, $\widetilde{F}(\Gamma)$ is the composition
\begin{gather*}
W_1\xrightarrow{l_{W_1}^{-1}} \mathbf{1}\boxtimes W_1\xrightarrow{\iota_A\boxtimes\mathrm{Id}_{W_1}} A\boxtimes W_1\xrightarrow{\Gamma} A\boxtimes W_2\xrightarrow{\pi_A\boxtimes\mathrm{Id}_{W_2}} \mathbf{1}\boxtimes W_2\xrightarrow{l_{W_2}} W_2.
\end{gather*}
Since $\pi_A\circ\iota_A=\mathrm{Id}_{\mathbf{1}}$, it is clear that $\widetilde{F}(F(f))=f$ for all $\mathcal{C}$-morphisms $f\colon W_1\rightarrow W_2$.

To show that $F\bigl(\widetilde{F}(\Gamma)\bigr)=\Gamma$ as well, note that if $\Gamma\colon F(W_1)\rightarrow F(W_2)$ is a morphism in~$\mathrm{Mod}_\mathcal{C}(A)^G$, then for all $g\in G$,
\begin{align*}
\varphi_{F(W_2)}(g)\circ \Gamma\circ(\iota_A\boxtimes\mathrm{Id}_{W_1}) &=\Gamma\circ\varphi_{F(W_1)}(g)\circ(\iota_A\boxtimes\mathrm{Id}_{W_1})\nonumber\\
& =\Gamma\circ(g\boxtimes\mathrm{Id}_{W_1})\circ(\iota_A\boxtimes\mathrm{Id}_{W_1})=\Gamma\circ(\iota_A\boxtimes\mathrm{Id}_{W_1}).
\end{align*}
That is, $\operatorname{Im} \Gamma\circ(\iota_A\boxtimes\mathrm{Id}_{W_1})$ is contained in the subspace of $G$-invariants of $F(W_2)$. In fact, this subspace of $G$-invariants is precisely $\operatorname{Im} \iota_A\boxtimes\mathrm{Id}_{W_2}$ since $J\boxtimes W_2\cong\bigoplus_{n=1}^\infty V_{2n}\otimes(\mathcal{L}_{2n}\boxtimes W_2)$ is a~direct sum of non-trivial simple $G$-modules. It follows that
\begin{gather*}
\Gamma\circ(\iota_A\boxtimes\mathrm{Id}_{W_1}) =(\iota_A\boxtimes\mathrm{Id}_{W_2})\circ(\pi_A\boxtimes\mathrm{Id}_{W_2})\circ\Gamma\circ(\iota_A\boxtimes\mathrm{Id}_{W_2}).
\end{gather*}
Using this identity along with the triangle axiom of a tensor category the right unit property of the algebra multiplication $\mu_A$, and naturality of the associativity isomorphisms the morphism~$F\bigl(\widetilde{F}(\Gamma)\bigr)=\mathrm{Id}_A\boxtimes \widetilde{F}(\Gamma)$ reduces to the composition
\begin{align*}
A\boxtimes W_1 & \xrightarrow{\mathrm{Id}_A\boxtimes l_{W_1}^{-1}} A\boxtimes(\mathbf{1}\boxtimes W_1)\xrightarrow{\mathrm{Id}_A\boxtimes(\iota_A\boxtimes\mathrm{Id}_{W_1})} A\boxtimes(A\boxtimes W_1)\nonumber\\
&\xrightarrow{\mathrm{Id}_A\boxtimes\Gamma} A\boxtimes(A\boxtimes W_2)\xrightarrow{\mathcal{A}_{A,A,W_2}} (A\boxtimes A)\boxtimes W_2\xrightarrow{\mu_A\boxtimes\mathrm{Id}_{W_2}} A\boxtimes W_2.
\end{align*}
Since $\Gamma$ is in particular a morphism in $\mathrm{Mod}_\mathcal{C}(A)$, this composition equals
\begin{align*}
A\boxtimes W_1 & \xrightarrow{\mathrm{Id}_A\boxtimes l_{W_1}^{-1}} A\boxtimes(\mathbf{1}\boxtimes W_1)\xrightarrow{\mathrm{Id}_A\boxtimes(\iota_A\boxtimes\mathrm{Id}_{W_1})} A\boxtimes(A\boxtimes W_1)\nonumber\\
&\xrightarrow{\mathcal{A}_{A,A,W_1}} (A\boxtimes A)\boxtimes W_1\xrightarrow{\mu_A\boxtimes\mathrm{Id}_{W_1}} A\boxtimes W_1 \xrightarrow{\Gamma} A\boxtimes W_2.
\end{align*}
But this is just $\Gamma$ by naturality of the associativity isomorphisms, the triangle axiom, and the right unit property of $\mu_A$. This completes the proof that the induction functor $F$ is an isomorphism on morphisms.
\end{proof}

Unlike in \cite[Theorem 7.14]{MY-cp1-Vir}, where it was shown that the $\mathrm{PSL}_2(\mathbb{C})$-equivariantization of the representation category of the $W_{p,1}$ triplet VOA is braided tensor equivalent to a representation category of the Virasoro algebra at central charge $c_{p,1}$, it is \textit{not} true that induction at $c_{p,q}$ central charge for coprime $p,q\geq 2$ gives an equivalence between $\mathcal{O}_{c_{p,q}}^0$ and $\operatorname{Rep}(W_{p,q})^{\mathrm{PSL}_2(\mathbb{C})}$. The problem is that unlike in the $q=1$ case, $W_{p,q}$ is not a simple VOA but rather has the simple Virasoro VOA $L_{c_{p,q}}$ as its non-trivial simple quotient. This allows us to construct objects of~$\operatorname{Rep}(W_{p,q})^{\mathrm{PSL}_2(\mathbb{C})}$ which are not in the essential image of the induction functor.

\begin{Example}
 Let $X=V\otimes W$ where $V$ is any finite-dimensional continuous $G=\mathrm{PSL}_2(\mathbb{C})$-module and $(W,Y_W)$ is any $L_{c_{p,q}}$-module with finite-dimensional $L_0$-eigenspaces. Then $X$ admits the $G$-representation $\varphi_X(g)=g\otimes\mathrm{Id}_W$, and $X$ is also a $W_{p,q}$-module with vertex operator~$Y_X =\mathrm{Id}_V\otimes Y_W(\pi(-),x)$, where $\pi\colon W_{p,q}\rightarrow L_{c_{p,q}}$ is the surjective VOA homomorphism. Further, \eqref{eqn:concrete-G-equiv} holds because $\pi(g\cdot a)=\pi(a)$ for all $a\in W_{p,q}$ and $g\in \mathrm{PSL}_2(\mathbb{C})$, so $(X,Y_X,\varphi_X)$ is an object of~$\operatorname{Rep}(W_{p,q})^{\mathrm{PSL}_2(\mathbb{C})}$ However, if $V$ does not contain the trivial $G$-module $V_0$ as a direct summand, then $X$ is not in the essential image of the induction functor $F$, because any non-zero induced module always contains a non-zero $\mathrm{PSL}_2(\mathbb{C})$-invariant subspace.
 \end{Example}

Our next result will show that the above examples are typical. We continue to use the notation $A$ for $W_{p,q}$ considered as a commutative algebra in $\mathcal{C}=\operatorname{Ind}(\mathcal{O}_{c_{p,q}})$, and we continue to set $G=\mathrm{PSL}_2(\mathbb{C})$. Since $G$ acts continuously on any object $X$ of $\mathrm{Mod}_\mathcal{C}(A)$, we have $X=\bigoplus_{n=0}^\infty V_{2n}\otimes X_{2n}$, where $X_{2n}=\operatorname{Hom}_G(V_{2n},X)$ is a $\mathcal{V}{\rm ir}$-module which is an object of $\mathcal{C}$. We~use~$X^G$ to denote the set of $G$-invariants in $X$, that is, $X^G=V_0\otimes X_0$.

\begin{Lemma}\label{lem:no-G-inv}
If $(X,Y_X,\varphi_X)$ is an object of $\mathrm{Mod}_\mathcal{C}(A)^G$ such that $X^G=0$, then as a $G\times\mathcal{V}{\rm ir}$-module, $X =\bigoplus_{n=1}^\infty V_{2n}\otimes X_{2n}$ where each $X_{2n}$ is an $L_{c_{p,q}}$-module.
\end{Lemma}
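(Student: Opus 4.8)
The plan is to exploit the decomposition $A = W_{p,q} = \mathbf{1} \oplus J$ with $J = \bigoplus_{n=1}^\infty V_{2n} \otimes \mathcal{L}_{2n}$ and the fact that $J$ has no $G$-invariants, together with the hypothesis $X^G = 0$, to show that the $A$-action on $X$ factors through the quotient $W_{p,q} \twoheadrightarrow L_{c_{p,q}}$. Recall from the short exact sequence \eqref{eqn:Wpq-exact-seq} that $\ker(W_{p,q} \to L_{c_{p,q}}) = I_{p,q} = \bigoplus_{n=1}^\infty (2n-1)\cdot\mathcal{L}_{2np-1,1}$, which as a submodule of $A$ is precisely $\mathrm{Im}\,\varphi \oplus J'$ where $J' = \bigoplus_{n=1}^\infty V_{2n}\otimes\mathcal{L}_{2n}$ realizes the non-trivial part; equivalently, $I_{p,q} = \mathrm{Im}\,\Phi$ in the notation of Lemma \ref{lem:im-phi+J-ideal}. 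So it suffices to show that $\mu_X$ restricted to $I_{p,q}\boxtimes X$ is zero, i.e.\ that the ideal $I_{p,q}$ acts trivially on $X$; then $X$ becomes a module for $W_{p,q}/I_{p,q} = L_{c_{p,q}}$, and since the $G$-action commutes with the (now $L_{c_{p,q}}$-linear) structure by \eqref{eqn:abstract-G-equiv}, each multiplicity space $X_{2n}$ inherits an $L_{c_{p,q}}$-module structure.

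First I would set up the weight-space/$G$-isotypic bookkeeping. Write $X = \bigoplus_{n\geq 0} V_{2n}\otimes X_{2n}$ with $X_{2n} = \mathrm{Hom}_G(V_{2n},X)$, and by hypothesis $X_0 = 0$, so $X = \bigoplus_{n\geq 1} V_{2n}\otimes X_{2n}$. Now consider the action of the simple ideal $I_{p,q}$. Since $I_{p,q}$ is a simple commutative algebra in its own right (without unit) — more usefully, since $I_{p,q}$ is a simple ideal of $A$ — the subobject $\mathrm{Im}\big(\mu_X|_{I_{p,q}\boxtimes X}\big) \subseteq X$ is an $A$-submodule of $X$. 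The key observation is that if this image is nonzero, then iterating the multiplication and using that $I_{p,q}$ contains $\mathrm{Im}\,\varphi\cong\mathcal{L}_{2p-1,1}$ (which generates a copy of the unit's image), one can produce a $G$-invariant vector in $X$ — contradicting $X^G=0$. More precisely, I would argue: $\mu_X|_{I_{p,q}\boxtimes X}$ nonzero means $\mu_X(j\otimes x)\neq 0$ for some $j$ in a weight space of $J'$-part and some $x$; because $J'$ is built from the $V_{2n}$ with $n\geq 1$ and $X$ is built from the $V_{2m}$ with $m\geq 1$, the $G$-decomposition of $J'\boxtimes X$ contains the trivial module $V_0$ (diagonal $\mathrm{Hom}_G(V_{2n},V_{2n})$ contributions), and applying $\mu_X$ to that $G$-invariant part lands in $X^G = 0$ forces... — actually the cleaner route is the following.

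The cleaner route, and the one I would carry out in detail: Let $I = I_{p,q}$, and consider the submodule $IX := \mathrm{Im}(\mu_X|_{I\boxtimes X})$. Suppose $IX \neq 0$. Since $\mathrm{Ind}(\mathcal{O}_{c_{p,q}})$ restricted to $\mathrm{Rep}\,G$-isotypic components is semisimple as a $G$-module, $IX$ is a nonzero $A$-submodule, hence the $A$-submodule it generates — which equals $IX$ since $I$ is an ideal and $AIX \subseteq IX$, and also $\mathbf{1}\cdot IX = IX$ — is nonzero. But $I = \mathrm{Im}\,\Phi$ acting on the whole of $X$: using associativity $\mu_X\circ(\mathrm{Id}_A\boxtimes\mu_X) = \mu_X\circ(\mu_A\boxtimes\mathrm{Id}_X)\circ\mathcal{A}$ and the fact that $\mu_A|_{J\boxtimes J}$ hits $\mathbf{1}$ (since $A$ contains $\mathbf{1}$ and $\mathcal{L}_{2n}\boxtimes\mathcal{L}_{2n}\supseteq\mathcal{L}_0 = \mathbf{1}'$, mapped into $\mathbf{1}$ via $\mu_A$ — this is where $W_{p,q}$ being a VOA with its invariant bilinear form pairing $\mathcal{L}_{2np-1,1}$ with itself enters), one shows $\mathbf{1}\subseteq AIX$-worth of action, i.e.\ $\mu_X|_{\mathbf{1}\boxtimes X}$-image $= X$ can be reached from $IX$, forcing $IX = X$; but then pairing a $V_{2n}$-isotypic generator of $J$ with the $V_{2n}$-isotypic part of $X$ via $\mu_X$ produces a nonzero map $V_{2n}\otimes V_{2n}\to X$ whose $V_0$-summand is nonzero (the invariant pairing is non-degenerate), i.e.\ $X^G\neq 0$ — contradiction. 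Hence $IX = 0$, so $\mu_X$ factors through $(W_{p,q}/I_{p,q})\boxtimes X = L_{c_{p,q}}\boxtimes X$, giving each $X_{2n}$ the structure of an $L_{c_{p,q}}$-module, and finite-dimensionality of $L_0$-eigenspaces is inherited from $X\in\mathrm{Mod}_\mathcal{C}(A)$.

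\textbf{Main obstacle.} The delicate point is making rigorous the claim ``$\mu_X|_{I\boxtimes X}$ nonzero $\Rightarrow$ it equals all of $X$, $\Rightarrow$ $X^G\neq 0$'': one must carefully use the non-degeneracy of the $PSL_2(\mathbb{C})$-invariant pairing on each multiplicity space $V_{2n}$ (equivalently, that $\mathcal{L}_{2n}$ is self-dual in $\mathcal{C}_{PSL_2}$ with evaluation coming from $\pi^0_{2n,2n}$, cf.\ Theorem \ref{thm:Csl2-rigid}), combined with associativity of $\mu_X$, to extract a $G$-invariant vector. I expect the bookkeeping of which $G$-isotypic pieces of $A\boxtimes X$ map nontrivially under $\mu_X$, and the reduction to the simple-ideal structure of $I_{p,q}$ from \cite[Theorem 5.4]{TW2}, to be the part requiring the most care; everything else is a formal consequence of $X^G = 0$ and the module axioms.
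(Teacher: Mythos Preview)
Your overall strategy --- show that the ideal $I_{p,q}$ annihilates $X$, so that the $W_{p,q}$-action descends to $L_{c_{p,q}}$ --- matches the paper's. But the contradiction argument you sketch has a genuine gap, and it is precisely the step you flag as the ``main obstacle''.

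You argue: if $IX\neq 0$, then (after showing $IX=X$) some restriction $\mu_X:(V_{2n}\otimes\mathcal{L}_{2n})\boxtimes(V_{2n}\otimes X_{2n})\to X$ is nonzero, and ``the invariant pairing is non-degenerate'' forces its $V_0$-summand to be nonzero, contradicting $X^G=0$. This inference is backwards. The map $\mu_X$ is $G$-equivariant, so its $V_0$-component lands in $X^G=0$ and is therefore \emph{automatically} zero; non-degeneracy of the pairing on $V_{2n}$ tells you nothing about $\mu_X$. What you actually need is the opposite implication: from the vanishing of the $V_0$-component (the ``trace'' $\sum_i Y_X(v^{(i)}\otimes a,x)(v^{(i)}\otimes b)$), deduce that \emph{each} term $Y_X(v^{(i)}\otimes a,x)(v^{(i)}\otimes b)$ vanishes. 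That is a genuinely nontrivial step and does not follow from the module axioms and $G$-equivariance alone. (Your preliminary claim $IX=X$ is also not justified --- $I\cdot I\subseteq I$ does not let you escape $IX$ --- but this turns out to be unnecessary anyway.)

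The paper supplies exactly this missing step, and it is not purely categorical: one uses that $I_{p,q}$ is a \emph{simple $W_{p,q}$-module} (not merely a simple ideal), together with analytic associativity of the vertex operator $Y_X$ and the Jacobson Density Theorem, to pass from vanishing of the $G$-invariant sum to vanishing of each summand. This is the content of \cite[Lemma 4.18]{McR-G-equiv} and \cite[Lemma 3.1]{DM}. Once each $Y_X(v^{(i)}\otimes a,x)(v^{(i)}\otimes b)=0$, the annihilator $\mathrm{Ann}_{W_{p,q}}(v^{(i)}\otimes b)$ is a nonzero ideal, hence contains the unique simple ideal $I_{p,q}$; since the $v^{(i)}\otimes b$ span $X$, one concludes $I_{p,q}\cdot X=0$. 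So the density argument, not a contradiction via producing a $G$-invariant, is the engine of the proof.
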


\begin{proof}
By assumption, $X=\bigoplus_{n=1}^\infty V_{2n}\otimes X_{2n}$ where each $X_{2n}$ is an object of $\mathcal{C}$. To show that each $X_{2n}$ is actually an $L_{c_{p,q}}$-module, recall the simple ideal $I_{p,q}=\mathcal{L}_{2p-1,1}\oplus\bigoplus_{n=1}^\infty V_{2n}\otimes\mathcal{L}_{2n}$ from \eqref{eqn:Wpq-exact-seq}.

For any $n\in\mathbb{Z}_{\geq 1}$, there is nothing to prove if $X_{2n}=0$, so we assume $X_{2n}\neq 0$ and take an arbitrary non-zero $b\in X_{2n}$. Then $V_{2n}\otimes b$ is a non-zero $G$-submodule of $X$ with basis~\smash{$\big\lbrace v^{(i)}\otimes b\big\rbrace_{i=1}^{2n+1}$}, where \smash{$\big\lbrace v^{(i)}\big\rbrace_{i=1}^{2n+1}$} is an orthonormal basis with respect to a non-degenerate $\mathfrak{sl}_2$-invariant bilinear form on $V_{2n}$. Now take some non-zero $a\in\mathcal{L}_{2n}$, so that $V_{2n}\otimes a$ is a non-zero $G$-submodule of~$I_{p,q}$ with basis \smash{$\big\lbrace v^{(i)}\otimes a\big\rbrace_{i=1}^{2n+1}$}. Then
\begin{gather*}
\sum_{i=1}^{2n+1} Y_X\bigl(v^{(i)}\otimes a, x\bigr)\bigl(v^{(i)}\otimes b\bigr)\in X^G[\log x]\lbrace x \rbrace=0,
\end{gather*}
so because $I_{p,q}$ is a simple $W_{p,q}$-module, the argument in the proof of \cite[Lemma 4.18]{McR-G-equiv} using the analytic associativity of $Y_X$ and the Jacobson Density Theorem (see also \cite[Lemma 3.1]{DM}) shows that $Y_X\bigl(v^{(i)}\otimes a,x\bigr)\bigl(v^{(i)}\otimes b\bigr)=0$ for all $i$. In particular, the annihilator ideal
\begin{gather*}
\mathrm{Ann}_{W_{p,q}}\bigl(v^{(i)}\otimes b\bigr) =\big\lbrace w\in W_{p,q}\mid Y_X(w,x)\bigl(v^{(i)}\otimes b\bigr) = 0\big\rbrace
\end{gather*}
is non-zero and thus contains $I_{p,q}$ for all $i$ and all $b\in\mathcal{L}_{2n}$.

Since $X$ is spanned by the vectors $v^{(i)}\otimes b$ for non-zero $b\in X_{2n}$, $n\geq 1$, and $1\leq i\leq 2n+1$, we have now shown that $Y_X(a,x)b=0$ for all $a\in I_{p,q}$ and $b\in X$. Thus $(X,\overline{Y}_X)$ is a well-defined $L_{c_{p,q}}$-module, where $\overline{Y}_X=Y_X(\pi(-),x)$ and $\pi\colon W_{p,q}\rightarrow L_{c_{p,q}}$ is the quotient VOA homomorphism. In particular, each $X_{2n}$ is a (not necessarily grading restricted) $L_{c_{p,q}}$-module.
\end{proof}

As a first application, Lemma \ref{lem:no-G-inv} gives some information about the relation between arbitrary objects of $\mathrm{Mod}_\mathcal{C}(A)^G$ and the essential image of the induction functor.
Indeed, by Frobenius reciprocity \eqref{eqn:Frob-rec}, the inclusion $\iota_X\colon X^G\hookrightarrow X$ induces the $\mathrm{Mod}_\mathcal{C}(A)$-morphism
\begin{gather*}
f_X = \mu_X\circ(\mathrm{Id}_A\boxtimes \iota_X)\colon F\bigl(X^G\bigr)\longrightarrow X,
\end{gather*}
and $f_X$ is further a morphism in $\mathrm{Mod}_\mathcal{C}(A)^G$ because
\begin{align*}
\varphi_X(g)\circ f_X & =\varphi_X(g)\circ\mu_X\circ(\mathrm{Id}_A\boxtimes\iota_X) =\mu_X\circ(g\boxtimes\varphi_X(g))\circ(\mathrm{Id}_A\boxtimes\iota_X)\\
& =\mu_X\circ(\mathrm{Id}_A\boxtimes\iota_X)\circ(g\boxtimes\mathrm{Id}_{X^G})=f_X\circ\varphi_{F(X^G)}(g)
\end{align*}
for all $g\in G$. Although we cannot say that $f_X$ is an isomorphism in general (in contrast with \cite[Theorem 7.14]{MY-cp1-Vir} on the relation between the Virasoro and triplet algebras at $c_{p,1}$ central charge), the restriction of $f_X$ to $G$-invariants is an isomorphism. So the kernel and cokernel of $f_X$ have no $G$-invariants, and Lemma \ref{lem:no-G-inv} immediately yields the following.

\begin{Corollary}
For any object $(X,Y_X;\varphi_X)$ of $\mathrm{Mod}_\mathcal{C}(A)^G$, there is a $\mathrm{PSL}_2(\mathbb{C})\times W_{p,q}$-module exact sequence
\begin{gather*}
0\longrightarrow L\longrightarrow F\bigl(X^G\bigr)\xrightarrow{ f_X } X\longrightarrow\widetilde{L}\longrightarrow 0,
\end{gather*}
where $L$ and $\widetilde{L}$ are $\mathrm{PSL}_2(\mathbb{C})\times L_{c_{p,q}}$-modules on which $W_{p,q}$ acts through the quotient map $W_{p,q}\twoheadrightarrow L_{c_{p,q}}$.
\end{Corollary}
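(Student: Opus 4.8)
The plan is to set $L=\ker f_X$ and $\widetilde{L}=\operatorname{coker} f_X$, both taken in the abelian category $\mathrm{Mod}_\mathcal{C}(A)^G$, so that the asserted four-term sequence is exact by construction; the only real content is then to check that neither $L$ nor $\widetilde{L}$ has nonzero $G$-invariants, after which Lemma~\ref{lem:no-G-inv} applies directly. First I would compute $F(X^G)^G$: since $A=\mathbf{1}\oplus J$ with $J\cong\bigoplus_{n=1}^\infty V_{2n}\otimes\mathcal{L}_{2n}$ as a $G\times\mathcal{V}ir$-module, $\mathbf{1}$ is $G$-trivial, and $X^G$ carries the trivial $G$-action, we get $J\boxtimes X^G\cong\bigoplus_{n=1}^\infty V_{2n}\otimes(\mathcal{L}_{2n}\boxtimes X^G)$, a direct sum of nontrivial simple $G$-modules, so $(J\boxtimes X^G)^G=0$ and hence $F(X^G)^G=\mathbf{1}\boxtimes X^G$.

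Next I would verify that $f_X$ restricts to an isomorphism on $G$-invariants. Using the unit axiom $\mu_X\circ(\iota_A\boxtimes\mathrm{Id}_X)=l_X$, one computes $f_X\circ(\iota_A\boxtimes\mathrm{Id}_{X^G})=\mu_X\circ(\iota_A\boxtimes\iota_X)=l_X\circ(\mathrm{Id}_{\mathbf{1}}\boxtimes\iota_X)$, which is injective with image, identified via $l_X$, equal to $X^G\subseteq X$. Thus $f_X$ carries $F(X^G)^G$ isomorphically onto $X^G$. Consequently $L^G=L\cap F(X^G)^G$ injects into $X$ under $f_X$, forcing $L^G=0$; and because $G=PSL_2(\mathbb{C})$ is linearly reductive (equivalently, $\mathrm{Rep}\,PSL_2$ is semisimple), taking $G$-invariants is exact, so from $0\to\mathrm{Im}\,f_X\to X\to\widetilde{L}\to 0$ we get $\widetilde{L}^G=X^G/(\mathrm{Im}\,f_X)^G$; since $\mathrm{Im}\,f_X\supseteq f_X(F(X^G)^G)=X^G$, this quotient is zero, i.e. $\widetilde{L}^G=0$.

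Finally, since $\mathrm{Mod}_\mathcal{C}(A)^G$ is abelian, $L$ (a subobject of $F(X^G)$) and $\widetilde{L}$ (a quotient of $X$) are again objects of $\mathrm{Mod}_\mathcal{C}(A)^G$, now with vanishing $G$-invariants, so Lemma~\ref{lem:no-G-inv} applies verbatim to each: both decompose as $\bigoplus_{n=1}^\infty V_{2n}\otimes(-)_{2n}$ with each multiplicity space a module for $L_{c_{p,q}}$, which is to say $W_{p,q}$ acts on $L$ and $\widetilde{L}$ through the quotient map $W_{p,q}\twoheadrightarrow L_{c_{p,q}}$, and we obtain the stated $PSL_2(\mathbb{C})\times W_{p,q}$-module exact sequence. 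There is no deep obstacle here — everything reduces to Lemma~\ref{lem:no-G-inv} — but the point needing the most care is the bookkeeping showing that $G$-invariants commute with the kernel, image, and cokernel in question, which is exactly where linear reductivity of $PSL_2(\mathbb{C})$ enters; a minor additional point is confirming that $L$ and $\widetilde{L}$ really lie in $\mathrm{Mod}_\mathcal{C}(A)^G$ so that Lemma~\ref{lem:no-G-inv} is applicable.
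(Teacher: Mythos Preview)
Your proposal is correct and follows exactly the paper's approach: the paper states this corollary as an immediate consequence of the preceding observation that $f_X$ restricts to an isomorphism on $G$-invariants (hence $\ker f_X$ and $\operatorname{coker} f_X$ have no $G$-invariants) together with Lemma~\ref{lem:no-G-inv}. You have simply supplied the details the paper leaves implicit, including the use of linear reductivity of $PSL_2(\mathbb{C})$ to handle the cokernel.
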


We can also use Lemma \ref{lem:no-G-inv} to show that the subcategory $\mathcal{O}_{c_{p,q}}^0\subseteq\mathcal{O}_{c_{p,q}}$ defined previously is closed under contragredient modules:
\begin{Proposition}\label{prop:Ocpq0-contragredients}
 If $W$ is an object of $\mathcal{O}_{c_{p,q}}^0$, then so is its contragredient $W'$.
\end{Proposition}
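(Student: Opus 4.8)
The plan is to work directly with the defining property of $\mathcal{O}_{c_{p,q}}^0$ (namely that $F(W)\in\mathrm{Rep}(W_{p,q})$), using that $\mathrm{Rep}(W_{p,q})$ is closed under contragredients and the exact sequence provided by the corollary following Lemma \ref{lem:no-G-inv}. Throughout write $A=W_{p,q}$, $G=PSL_2(\mathbb{C})$, $\mathbf{1}=\mathcal{K}_{1,1}$, and $J=\bigoplus_{n=2}^\infty(2n-1)\cdot\mathcal{L}_{2np-1,1}$, so that $A\cong\mathbf{1}\oplus J$ as a $\mathcal{V}ir$-module and, by Theorem \ref{thm1}(2), $A=\bigoplus_{m\geq 0}V_{2m}\otimes A_{2m}$ as a $G\times\mathcal{V}ir$-module with $A_0=\mathbf{1}$ and $V_{2m}$ a nontrivial $G$-module for $m\geq 1$. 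Let $W$ be an object of $\mathcal{O}_{c_{p,q}}^0$, so that $F(W)=A\boxtimes W$ lies in $\mathrm{Rep}(W_{p,q})$; note that $W'$ is again an object of $\mathcal{O}_{c_{p,q}}$, and the goal is to show $F(W')=A\boxtimes W'$ lies in $\mathrm{Rep}(W_{p,q})$ as well.

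First I would observe that $F(W)'$, the contragredient $W_{p,q}$-module, again lies in $\mathrm{Rep}(W_{p,q})$ by closure under contragredients, and that with the dual $G$-action it is an object of $\mathrm{Mod}_\mathcal{C}(A)^G$. Since $F(W)=\bigoplus_{m\geq 0}V_{2m}\otimes(A_{2m}\boxtimes W)$ with $G$ acting only on the $V_{2m}$ factors, taking contragredients preserves this isotypic decomposition, so $(F(W)')^G\cong(\mathbf{1}\boxtimes W)'\cong W'$ as $\mathcal{V}ir$-modules. Applying the corollary after Lemma \ref{lem:no-G-inv} to $X=F(W)'$ then produces a $PSL_2(\mathbb{C})\times W_{p,q}$-module exact sequence
\begin{equation*}
0\longrightarrow L\longrightarrow F(W')\xrightarrow{\ f\ }F(W)'\longrightarrow\widetilde{L}\longrightarrow 0,
\end{equation*}
in which $L$ and $\widetilde{L}$ are $PSL_2(\mathbb{C})\times L_{c_{p,q}}$-modules on which $W_{p,q}$ acts through the quotient $W_{p,q}\twoheadrightarrow L_{c_{p,q}}$.

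Next I would extract grading restriction. The image $\mathrm{Im}\,f=\ker(F(W)'\to\widetilde{L})$ is a $W_{p,q}$-submodule of $F(W)'\in\mathrm{Rep}(W_{p,q})$, hence lies in $\mathrm{Rep}(W_{p,q})$, since finite-dimensionality of generalized weight spaces and lower truncation are inherited by submodules. So it remains to show $L\in\mathrm{Rep}(W_{p,q})$; then $F(W')$, being an extension of $\mathrm{Im}\,f$ by $L$ in the abelian category of generalized $W_{p,q}$-modules, will also lie in $\mathrm{Rep}(W_{p,q})$. Now $L$ is an $L_{c_{p,q}}$-module, and $L_{c_{p,q}}$ is rational with only finitely many simple modules $\mathcal{L}_{r,s}$ ($1\leq r\leq p-1$, $1\leq s\leq q-1$), so $L$ is a direct sum of copies of these and is grading-restricted precisely when each occurs with finite multiplicity. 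Restricting $f$ to $\mathbf{1}\boxtimes W'\subseteq A\boxtimes W'=F(W')$, the unitality axiom for the $A$-module $F(W)'$ identifies $f|_{\mathbf{1}\boxtimes W'}$ with the (injective) inclusion $W'=(F(W)')^G\hookrightarrow F(W)'$; hence $L\cap(\mathbf{1}\boxtimes W')=0$ and $L$ embeds as a $\mathcal{V}ir$-module into $J\boxtimes W'=\bigoplus_{n=2}^\infty(2n-1)\cdot(\mathcal{L}_{2np-1,1}\boxtimes W')$.

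The part I expect to require the most care is bounding the multiplicity of each simple $L_{c_{p,q}}$-module in $J\boxtimes W'$. Here I would use that $W'$ has the same (finite set of) composition factors as $W$, since simple $\mathcal{V}ir$-modules are self-contragredient; by right exactness of $\boxtimes$, every composition factor of $\mathcal{L}_{2np-1,1}\boxtimes W'$ is a composition factor of $\mathcal{L}_{2np-1,1}\boxtimes\mathcal{L}_{a,b}$ for some composition factor $\mathcal{L}_{a,b}$ of $W$. For each such $\mathcal{L}_{a,b}$ we have $\mathcal{L}_{a,b}\in\mathcal{O}_{c_{p,q}}^0$ by Proposition \ref{prop:Ocpq0-first-properties}, so $A\boxtimes\mathcal{L}_{a,b}\in\mathrm{Rep}(W_{p,q})$, and since $\mathcal{L}_{2np-1,1}$ is a $\mathcal{V}ir$-direct summand of $A$ for every $n\geq 2$, the module $\mathcal{L}_{2np-1,1}\boxtimes\mathcal{L}_{a,b}$ is a $\mathcal{V}ir$-direct summand of the grading-restricted module $A\boxtimes\mathcal{L}_{a,b}$. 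Hence, for any fixed conformal weight, only finitely many $n$ admit a vector of that weight in $\mathcal{L}_{2np-1,1}\boxtimes\mathcal{L}_{a,b}$; as a composition factor isomorphic to $\mathcal{L}_{r,s}$ forces a vector of conformal weight $h_{r,s}$, each $\mathcal{L}_{r,s}$ is a composition factor of $\mathcal{L}_{2np-1,1}\boxtimes\mathcal{L}_{a,b}$ for only finitely many $n$. Summing over the finitely many composition factors of $W$ shows each simple $L_{c_{p,q}}$-module has finite total multiplicity in $J\boxtimes W'$, hence finite multiplicity in $L$. Therefore $L\in\mathrm{Rep}(W_{p,q})$, and the proof concludes as indicated. (Throughout one uses that $\mathrm{Rep}(W_{p,q})$ is closed under submodules, quotients, and extensions, which is immediate from the definition of grading restriction.)
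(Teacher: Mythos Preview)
Your argument for the grading-restriction part of $F(W')\in\mathrm{Rep}(W_{p,q})$ is fine (the paper does this more directly by induction on the length of $W'$, using Proposition \ref{prop:Ocpq0-first-properties} for the simple composition factors). The genuine gap is locality. You assert that $F(W')$ is ``an extension of $\mathrm{Im}\,f$ by $L$ in the abelian category of generalized $W_{p,q}$-modules'' and then appeal to closure under extensions. But the exact sequence supplied by the corollary after Lemma \ref{lem:no-G-inv} lives only in $\mathrm{Mod}_\mathcal{C}(A)$, the category of \emph{possibly non-local} $A$-modules; that $F(W')$ is a genuine (local) $W_{p,q}$-module is precisely what must be proved. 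Locality is \emph{not} closed under extensions in $\mathrm{Mod}_\mathcal{C}(A)$: if $0\to L\to M\to N\to 0$ has $L,N$ local, the obstruction $\mu_M\circ(\mathcal{R}^2_{A,M}-\mathrm{Id})$ vanishes on $A\boxtimes L$ and projects to zero in $N$, but it may still define a nonzero $\mathcal{C}$-map $A\boxtimes N\to L$. In vertex-operator terms, $Y_{F(W')}$ could a priori contain non-integer powers of $x$ or $\log x$ terms that map $F(W')$ into the submodule $L=\mathrm{Ker}\,f$.

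The paper closes this gap with a specific argument. After establishing the same exact sequence and that $\mathrm{Ker}\,f$ is an $L_{c_{p,q}}$-module, it expands $Y_{F(W')}=\sum_{\lambda,k}\mathcal{Y}_{\lambda,k}\,x^\lambda(\log x)^k$ and shows that the top component $\mathcal{Y}_{\lambda,K}$ vanishes whenever $\lambda\notin\mathbb{Z}$ or $K>0$. Such a component would give a $V_{c_{p,q}}$-intertwining operator of type $\binom{\mathrm{Ker}\,f}{W_{p,q}\;F(W')}$; it vanishes on $\mathcal{K}_{1,1}\otimes F(W')$ since $F(W')$ is a $V_{c_{p,q}}$-module, and on each $(V_{2n}\otimes\mathcal{L}_{2n})\otimes F(W')$ one passes by intertwining-operator symmetry to type $\binom{V_{2n}\otimes\mathcal{L}_{2n}}{(\mathrm{Ker}\,f)'\;F(W')}$, whose image is an $L_{c_{p,q}}$-module by \cite[Lemma 5.11]{MS-cpq-Vir} since $(\mathrm{Ker}\,f)'$ is. As $\mathcal{L}_{2n}$ is not an $L_{c_{p,q}}$-module, this forces vanishing. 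This step---using intertwining-operator symmetries and the special fact that $\mathrm{Ker}\,f$ is an $L_{c_{p,q}}$-module---is the missing ingredient in your proposal.
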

\begin{proof}
By assumption, the induced module $F(W)=\bigoplus_{n=0}^\infty V_{2n}\otimes(\mathcal{L}_{2n}\boxtimes W)$ is an object of the category $\operatorname{Rep}(W_{p,q})$ of grading-restricted generalized $W_{p,q}$-modules. We need to show the same for $F(W')$. First, since $W'$ has finite length and $F$ is right exact, induction on the length shows that $F(W')$ has finite-dimensional conformal weight spaces and a lower bound on conformal weights, provided the same holds for $F(L)$ whenever $L$ is a simple object of $\mathcal{O}_{c_{p,q}}$. Indeed, this holds for $F(L)$ by Proposition \ref{prop:Ocpq0-first-properties}. It remains to show that $\mu_{F(W')}\circ\mathcal{R}_{W_{p,q},F(W')}^2=\mu_{F(W')}$, or equivalently, the vertex operator
\begin{gather*}
Y_{F(W')} =\mu_{F(W')}\circ\mathcal{Y}_\boxtimes \colon\ W_{p,q}\otimes F(W')\longrightarrow F(W')[\log x]\lbrace x\rbrace
\end{gather*}
involves only integral powers of the formal variable $x$.

 To do so, we consider the $W_{p,q}$-module contragredient $F(W)'$, which like $F(W)$ is an object of $\operatorname{Rep}(W_{p,q})$, with vertex operator defined by
\begin{gather*}
\langle Y_{F(W)'}(v,x)w', w\rangle =\big\langle w', Y_{F(W)}\bigl(e^{xL_1}\bigl(-x^{-2}\bigr)^{L_0} v, x^{-1}\bigr)w\big\rangle
\end{gather*}
for $v\in W_{p,q}$, $w'\in F(W)'$, and $w\in F(W)$ \cite{FHL}. Using this relation and \eqref{eqn:concrete-G-equiv}, it is easy to see that $F(W)'$ is also an object of $\operatorname{Rep}(W_{p,q})^{\mathrm{PSL}_2(\mathbb{C})}$, with
\begin{gather*}
\langle \varphi_{F(W)'}(g)\cdot w', w\rangle = \big\langle w',\varphi_{F(W)}(g)^{-1}\cdot w\big\rangle
\end{gather*}
for all $g\in \mathrm{PSL}_2(\mathbb{C})$, $w'\in F(W)'$, and $w\in F(W)$. In particular, $F(W)'$ is the (graded) dual of~$F(W)$ as both $\mathrm{PSL}_2(\mathbb{C})$- and $\mathcal{V}{\rm ir}$-module, that is,{\samepage
\begin{gather}\label{eqn:F(W)'-decomp}
F(W)'\cong\bigoplus_{n=0}^\infty V_{2n}^*\otimes(\mathcal{L}_{2n}\boxtimes W)'\cong W'\oplus\bigoplus_{n=1}^\infty V_{2n}\otimes(\mathcal{L}_{2n}\boxtimes W)'
\end{gather}
as a $\mathrm{PSL}_2(\mathbb{C})\times\mathcal{V}{\rm ir}$-module.}

The decomposition \eqref{eqn:F(W)'-decomp} shows that $F(W')^{\mathrm{PSL}_2(\mathbb{C})}\cong (F(W)')^{\mathrm{PSL}_2(\mathbb{C})}\cong W'$, so we get a~$\mathrm{PSL}_2(\mathbb{C})\times W_{p,q}$-module homomorphism
\begin{gather*}
f\colon\ F(W') \xrightarrow{ \sim } F\bigl((F(W)')^{\mathrm{PSL}_2(\mathbb{C})}\bigr) \xrightarrow{f_{F(W)'}} F(W)',
\end{gather*}
which is an isomorphism on $\mathrm{PSL}_2(\mathbb{C})$-invariant subspaces.
Thus we have an exact sequence
\begin{gather}\label{eqn:F(W')-exact-seq}
0 \longrightarrow \mathrm{Ker} f \longrightarrow F(W') \longrightarrow \operatorname{Im} f \longrightarrow 0,
\end{gather}
where $\operatorname{Im} f$ is an object of $\operatorname{Rep}(W_{p,q})$ since it is a submodule of $F(W)'$, and $\mathrm{Ker} f$ has no $\mathrm{PSL}_2(\mathbb{C})$-invariants. Thus by Lemma \ref{lem:no-G-inv}, $\mathrm{Ker} f$ is an $L_{c_{p,q}}$-module, that is, a direct sum of modules $\mathcal{L}_{r,s}$ for $1\leq r\leq p-1$, $1\leq s\leq q-1$, and $W_{p,q}$ acts on $\mathrm{Ker} f$ through the quotient map $W_{p,q}\twoheadrightarrow L_{c_{p,q}}$. Note that $\mathrm{Ker} f$ is a grading-restricted $L_{c_{p,q}}$-module since $F(W')$ has finite-dimensional conformal weight spaces.

We now write the vertex operator \smash{$Y_{F(W')}$} as
\begin{gather*}
Y_{F(W')} =\sum_{\lambda+\mathbb{Z}\in\mathbb{C}/\mathbb{Z}}\sum_{k=0}^K \mathcal{Y}_{\lambda,k} x^\lambda (\log x)^k,
\end{gather*}
where $\mathcal{Y}_{\lambda,k}\colon W_{p,q}\otimes F(W')\longrightarrow F(W')((x))$ is a Laurent series and $K$ is related to the maximum Jordan block size of $L_0$ acting on $F(W')$ (see \cite[Proposition 3.20\,(c)]{HLZ2}). Note that $K$ is finite by~\eqref{eqn:F(W')-exact-seq}, since $L_0$ acts semisimply on $\mathrm{Ker} f$ and the maximum Jordan block size for $L_0$ acting on the object $\operatorname{Im} f$ of $\operatorname{Rep}(W_{p,q})$ is finite. For any $\lambda+\mathbb{Z}\in\mathbb{C}/\mathbb{Z}$, $x^\lambda\mathcal{Y}_{\lambda,K}$ is a $V_{c_{p,q}}$-module intertwining operator of type \smash{$\binom{F(W')}{W_{p,q} F(W')}$}. It suffices to show that $\mathcal{Y}_{\lambda,K}=0$ if either $\lambda\notin\mathbb{Z}$ or~${K>0}$.

Indeed, if $\lambda\notin\mathbb{Z}$ or $K>0$, then $\mathcal{Y}_{\lambda,K}(a,x)b\in(\mathrm{Ker} f)((x))$ for all $a\in W_{p,q}$, $b \in F(W')$ because $\operatorname{Im} f\subseteq F(W)'$ is local and hence
\begin{align*}
\sum_{\lambda+\mathbb{Z}\in\mathbb{C}/\mathbb{Z}}\sum_{k=0}^K f(\mathcal{Y}_{\lambda,K}(a,x)b) x^\lambda(\log x)^k & =f(Y_{F(W')}(a,x)b)\\
&=Y_{F(W)'}(a,x)f(b)\in F(W)'((x)).
\end{align*}
Thus $x^\lambda\mathcal{Y}_{\lambda,K}$ is an intertwining operator of type \smash{$\binom{\mathrm{Ker} f}{W_{p,q} F(W')}$}. Moreover, $\mathcal{Y}_{\lambda,K}\vert_{\mathcal{K}_{1,1}\otimes F(W')}=0$ since as a $V_{c_{p,q}}$-module, $F(W')$ is local with vertex operator $Y_{F(W')}\vert_{\mathcal{K}_{1,1}\otimes F(W')}$. Since $W_{p,q}=\mathcal{K}_{1,1}\oplus \bigoplus_{n=1}^\infty V_{2n}\otimes\mathcal{L}_{2n}$, it remains to show that $\mathcal{Y}_{\lambda,K}\vert_{(V_{2n}\otimes\mathcal{L}_{2n})\otimes F(W')}=0$. In fact, by symmetries of intertwining operators, $x^\lambda\mathcal{Y}_{\lambda,K}$ induces a $V_{c_{p,q}}$-module intertwining operator
\begin{gather*}
\mathcal{Y}\colon\ (\mathrm{Ker} f)'\otimes F(W')\longrightarrow(V_{2n}\otimes\mathcal{L}_{2n})((x)),
\end{gather*}
which equals $0$ if and only if $\mathcal{Y}_{\lambda,K}\vert_{(V_{2n}\otimes\mathcal{L}_{2n})\otimes F(W')}=0$. By \cite[Lemma 5.11]{MS-cpq-Vir}, the image of $\mathcal{Y}$ is an $L_{c_{p,q}}$-module. Thus because $\mathcal{L}_{2n}=\mathcal{L}_{2(n+1)p-1,1}$ is not an $L_{c_{p,q}}$-module, $\mathcal{Y}=0$ and then so is~${\mathcal{Y}_{\lambda,K}\vert_{(V_{2n}\otimes\mathcal{L}_{2n})\otimes F(W')}}$. This completes the proof of the proposition.
\end{proof}

It is almost immediate from Propositions \ref{prop:Ocpq0-first-properties} and \ref{prop:Ocpq0-contragredients} that $\mathcal{O}_{c_{p,q}}^0$ is a locally finite abelian category. Indeed, since it is a full subcategory of the locally finite abelian category $\mathcal{O}_{c_{p,q}}$ which contains $0$ and is closed under finite direct sums and quotients, all that remains is to show that~\smash{$\mathcal{O}_{c_{p,q}}^0$} is closed under submodules. But if $\widetilde{W}$ is a submodule of an object $W$ of \smash{$\mathcal{O}_{c_{p,q}}^0$}, then $\widetilde{W}'$ is a quotient of $W'$ and thus is an object of \smash{$\mathcal{O}_{c_{p,q}}^0$}, and then so is $\widetilde{W}\cong\bigl(\widetilde{W}'\bigr)'$. Moreover, by~\cite{ALSW}, contragredient modules give \smash{$\mathcal{O}_{c_{p,q}}^0$} the structure of a ribbon Grothendieck--Verdier category, which means that taking contragredients is a contravariant auto-equivalence of \smash{$\mathcal{O}_{c_{p,q}}$} and that there is a natural isomorphism
$\operatorname{Hom}(W\boxtimes X,\mathcal{K}_{1,1}')\cong\operatorname{Hom}(W,X')$
for all objects $W$ and $X$ in~$\mathcal{O}_{c_{p,q}}$.

\begin{Theorem}
$\mathcal{O}_{c_{p,q}}^0$ is a locally finite abelian ribbon Grothendieck--Verdier category.
\end{Theorem}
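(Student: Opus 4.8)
The plan is to show that the ribbon Grothendieck--Verdier structure carried by the ambient Virasoro category $\mathcal{O}_{c_{p,q}}$ restricts to the full subcategory $\mathcal{O}_{c_{p,q}}^0$. Recall from \cite{CJORY, ALSW} (see also \cite{MS-cpq-Vir}) that $\mathcal{O}_{c_{p,q}}$ is a ribbon Grothendieck--Verdier category in the sense of \cite{BD}: it is braided monoidal with twist $e^{2\pi i L_0}$, the contragredient functor $W\mapsto W'$ is a contravariant auto-equivalence squaring to the identity, the dualizing object is $K:=\mathcal{K}_{1,1}'=V_{c_{p,q}}'$, and there is a natural isomorphism $\mathrm{Hom}(W\boxtimes X,K)\cong\mathrm{Hom}(W,X')$ for all objects $W,X$. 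We have already observed, just before the theorem statement, that $\mathcal{O}_{c_{p,q}}^0$ is a locally finite abelian category; we have also seen that $\mathcal{O}_{c_{p,q}}^0$ is a full braided monoidal subcategory of $\mathcal{O}_{c_{p,q}}$, since the induction functor $F\vert_{\mathcal{O}_{c_{p,q}}^0}$ is braided monoidal. So it remains only to check that the Grothendieck--Verdier and ribbon data all stay inside $\mathcal{O}_{c_{p,q}}^0$.

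First, the dualizing object $K=\mathcal{K}_{1,1}'$ is an object of $\mathcal{O}_{c_{p,q}}^0$: indeed $\mathcal{K}_{1,1}$ lies in $\mathcal{O}_{c_{p,q}}^0$ by Proposition \ref{prop:Ocpq0-first-properties} (it is the Kac module with $r=s=1$, equivalently the unit object $V_{c_{p,q}}$, which induces to $W_{p,q}$ itself), and hence $K=\mathcal{K}_{1,1}'$ lies in $\mathcal{O}_{c_{p,q}}^0$ by Proposition \ref{prop:Ocpq0-contragredients}. Next, Proposition \ref{prop:Ocpq0-contragredients} says that the contragredient functor maps $\mathcal{O}_{c_{p,q}}^0$ into itself, and since it squares to the identity on $\mathcal{O}_{c_{p,q}}$ it therefore restricts to a contravariant auto-equivalence of $\mathcal{O}_{c_{p,q}}^0$. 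Because $\mathcal{O}_{c_{p,q}}^0$ is a full subcategory closed under $\boxtimes$ and under contragredients, for any objects $W,X$ of $\mathcal{O}_{c_{p,q}}^0$ all of $W$, $X$, $W\boxtimes X$, $X'$, and $K$ lie in $\mathcal{O}_{c_{p,q}}^0$, and all Hom-spaces and structure morphisms (associator, unit isomorphisms, braiding, twist $e^{2\pi i L_0}$, and the Grothendieck--Verdier duality morphism) are those computed in $\mathcal{O}_{c_{p,q}}$. Hence the defining natural isomorphism $\mathrm{Hom}(W\boxtimes X,K)\cong\mathrm{Hom}(W,X')$, together with all the coherence and ribbon axioms, holds in $\mathcal{O}_{c_{p,q}}^0$ by restriction; in particular the natural automorphism $e^{2\pi i L_0}$ of the identity functor restricts to $\mathcal{O}_{c_{p,q}}^0$ and its compatibility with $\boxtimes$, with the braiding, and with contragredients is inherited from $\mathcal{O}_{c_{p,q}}$.

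There is no genuinely new difficulty here: the substantial input has already been supplied by Propositions \ref{prop:Ocpq0-first-properties} and \ref{prop:Ocpq0-contragredients}, the second of which (closure under contragredients) is the only point requiring real work. The main thing to be careful about is bookkeeping, namely confirming that the dualizing object $\mathcal{K}_{1,1}'$ and all tensor products and contragredients of objects of $\mathcal{O}_{c_{p,q}}^0$ genuinely lie in $\mathcal{O}_{c_{p,q}}^0$, so that each piece of structure restricts in a well-defined way; the most delicate such point is that the contragredient functor, restricted to $\mathcal{O}_{c_{p,q}}^0$, is still an equivalence, and this is immediate once one knows it is an endofunctor of $\mathcal{O}_{c_{p,q}}^0$ squaring to the identity.
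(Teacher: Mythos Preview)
Your proposal is correct and follows essentially the same approach as the paper: the locally finite abelian structure is established in the paragraph immediately preceding the theorem (which you cite), and the ribbon Grothendieck--Verdier structure is inherited from $\mathcal{O}_{c_{p,q}}$ via \cite{ALSW} once one knows $\mathcal{O}_{c_{p,q}}^0$ is closed under $\boxtimes$ and contragredients. You spell out a bit more explicitly than the paper why the dualizing object $\mathcal{K}_{1,1}'$ lies in $\mathcal{O}_{c_{p,q}}^0$ and why the restricted contragredient functor remains an equivalence, but these are the same verifications the paper's appeal to \cite{ALSW} implicitly requires.
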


In \cite[Section 7]{MS-cpq-Vir}, we conjectured that there should be a suitable tensor subcategory of $\mathcal{O}_{c_{p,q}}$ that contains all simple objects of $\mathcal{O}_{c_{p,q}}$ and has enough projectives, and that such a subcategory should be the right category for constructing a full (bulk) logarithmic conformal field theory based on the Virasoro algebra, that is, a logarithmic minimal module at $c_{p,q}$ central charge. We now conjecture that \smash{$\mathcal{O}_{c_{p,q}}^0$} is the appropriate subcategory of \smash{$\mathcal{O}_{c_{p,q}}$} for logarithmic conformal field theory. Although we have not yet shown that $\mathcal{O}_{c_{p,q}}^0$ has enough projectives, it may be possible to prove this using the existence and structure of projective objects in $\operatorname{Rep}(W_{p,q})$ \cite{Nak-Wpq-projective}. We leave the problem of obtaining projective objects in $\mathcal{O}_{c_{p,q}}^0$ to future work.

\section{Conclusion and outlook}

In this paper, we have given a new tensor-categorical construction of the triplet $W$-algebra~$W_{p,q}$ for coprime $p,q\in\mathbb{Z}_{\geq 2}$. Specifically, we have ``glued'' $\operatorname{Rep} \mathrm{PSL}_2$ with a subcategory of $\mathcal{V}{\rm ir}$-modules at central charge $c_{p,q}$ having $\mathrm{PSL}_2$-fusion rules, and we have then appropriately modified to obtain the non-simple VOA $W_{p,q}$. Due to the involvement of $\operatorname{Rep} \mathrm{PSL}_2$ in the construction, a major corollary is that the automorphism group of $W_{p,q}$ is $\mathrm{PSL}_2(\mathbb{C})$, with no need to use complicated analysis of screening operators to give an explicit action of $\mathrm{PSL}_2(\mathbb{C})$. It would be interesting to explore whether the $\mathrm{PSL}_2(\mathbb{C})$-action on $W_{p,q}$ could be exploited to simplify the proofs from \cite{TW2} of important properties of $W_{p,q}$ such as $C_2$-cofiniteness and the classification of its simple modules.

We have also defined a tensor subcategory $\mathcal{O}_{c_{p,q}}^0$ of $\mathcal{V}{\rm ir}$-modules at central charge $c_{p,q}$ that induce to ordinary modules for $W_{p,q}$, and we have shown that it contains all simple objects of $\mathcal{O}_{c_{p,q}}$ and is closed under contragredients and thus is a ribbon Grothendieck--Verdier category. The main remaining open problems for $\mathcal{O}_{c_{p,q}}^0$ are to show that it has enough projective objects, and to explore its applications in logarithmic conformal field theory. For the latter problem, our result that $\mathcal{O}_{c_{p,q}}^0$ is a Grothendieck--Verdier category will be key since there is now a theory of module categories and Frobenius algebras for Grothendieck--Verdier categories under development \cite{FSSW}, and these structures are important in constructions of full CFTs \cite{RFFS, RGW}.

Beyond conformal field theory, topological quantum field theories and invariants of low-dimensional manifolds can be constructed from braided tensor categories that are not necessarily semisimple (see, for example, \cite{BGR,Bla-De-Renzi, CCC, DR-G-G-PM-R} for some recent results), which is significant since for example
semisimple $4$-dimensional topological field theories cannot detect exotic smooth structures \cite{Reutter}. While most results in this direction assume rigidity for the braided tensor categories under consideration, it would also be interesting to explore how much these results generalize to non-rigid categories such as $\operatorname{Rep}(W_{p,q})$ and $\mathcal{O}_{c_{p,q}}^0$, or to the module categories for universal affine $\mathfrak{sl}_2$ VOAs studied in~\cite{MY-univ-sl2}.

The triplet algebras $W_{p,q}$ are special cases of a large class of VOAs sometimes called Feigin--Tipunin algebras \cite{FT, Sugimoto}. In general, these VOAs are (or are expected to be) large extensions of an affine $W$-algebra associated to a simple Lie algebra at a given level, such that the affine $W$-subalgebra is the fixed points of a corresponding Lie group of automorphisms. Thus $W_{p,q}$ is the Feigin--Tipunin algebra associated to the principal affine $W$-algebra of $\mathfrak{sl}_2$ at level $-2+\frac{p}{q}$. It would be interesting to explore whether the methods of this paper could be used to study Feigin--Tipunin algebras beyond $W_{p,q}$. In fact, we expect that the methods of Section~\ref{sec:comm-alg-and-VOA} can be combined with the results of~\cite{MY-univ-sl2} to give a tensor-categorical construction of the Feigin--Tipunin algebra associated to the universal affine VOA of $\mathfrak{sl}_2$ (the affine $W$-algebra of $\mathfrak{sl}_2$ for the trivial nilpotent element) at level $-2+\frac{p}{q}$ for coprime $p,q\in\mathbb{Z}_{\geq 1}$; when $p=1$, these algebras have been recently studied in~\cite{CNS}.

Finally, we note that there are vertex operator superalgebra analogues of the triplet $W$-algebras which are extensions of the $N=1$ super Virasoro vertex operator superalgebra \cite{AM-N=1-triplet}. We expect that our methods could also apply to give tensor-categorical constructions of these algebras, using the $N=1$ super Virasoro tensor categories recently constructed in \cite{CMOY}.

\subsection*{Acknowledgments}

R.M.\ is partially supported by a research fellowship from the Alexander von Humboldt Foundation and also thanks the Universit\"{a}t Hamburg for hospitality during the visit in which part of this work was done. We also thank Hiromu Nakano and Simon Wood for helpful discussions, and we thank the referees for comments and suggestions.

\pdfbookmark[1]{References}{ref}
\LastPageEnding

\end{document}